\documentclass{amsart}
\linespread{1}
\usepackage{amstext,amssymb,amsmath,amsbsy,mathrsfs,mathtools,eucal,amsfonts,amscd,exscale}
\usepackage{graphics, epstopdf}
\usepackage{verbatim}
\usepackage{appendix}
\usepackage{rotating}
\usepackage{xspace}
\usepackage{enumerate}
\usepackage{enumitem}
\usepackage{wrapfig, lipsum}

\usepackage{comment}

\usepackage{color}
\definecolor{red}{rgb}{1,0,0}

\definecolor{blue}{rgb}{0,0,1}

\usepackage{tikz}
\usetikzlibrary{arrows}


\numberwithin{equation}{section}
\numberwithin{table}{section}
\numberwithin{figure}{section}

\newcommand{\Le}{L_{\epsilon}}
\newcommand{\Ie}{I_{\epsilon}}
\newcommand{\Qe}{Q_{\epsilon}}

\newcommand{\du}[2]{\delta u (#1, #2)}

\newcommand{\ue}{u^{\epsilon}}
\newcommand{\N}[1]{\omega (#1)}

\newcommand{\bld}[1]{\boldsymbol{#1}}

\newcommand{\abs}[1]{| #1 |}
\newcommand{\Abs}[1]{\big| #1 \big|}

\newcommand{\vphi}{\varphi}

\newcommand{\dm}{\Omega}
\newcommand{\dme}{\Omega_{\epsilon}}
\newcommand{\bdry}{\partial \Omega}

%
\newcommand{\Th}{{\mathcal{T}_{h}}}

\newcommand{\Fh}{{\mathcal{F}_{h}}}
\newcommand{\F}{{\mathcal{F}}}
\newcommand{\Nh}{{\mathcal{N}_{h}}}
\newcommand{\Vhzero}{{\mathbb{V}_{h}^0}}
\newcommand{\Vh}{{\mathbb{V}_{h}}}

%

\newcommand{\dist}{\textrm{dist}\,}
\newcommand{\gradv}{\bld{\nabla}}
\newcommand{\laplace}{\bld{\Delta}}
%
\newcommand{\norm}[1]{\|\,#1\,\|}
\newcommand{\inftynorm}[1]{\|\,#1\,\|_{L^{\infty}(\dm)}}
\newcommand{\inftynormh}[1]{\|\,#1\,\|_{L^{\infty}(\dm_h)}}

\newcommand{\innerpro}[2]{\langle#1, #2\rangle}
%
%

%
%

\newcommand{\squishlist}{
 \begin{list}{$\bullet$}
  { \setlength{\itemsep}{0pt}
     \setlength{\parsep}{3pt}
     \setlength{\topsep}{3pt}
     \setlength{\partopsep}{0pt}
     \setlength{\leftmargin}{0.9em}
     \setlength{\labelwidth}{0.5em}
     \setlength{\labelsep}{0.5em} } }
\newcommand{\squishlisttwo}{
 \begin{list}{$\bullet$}
  { \setlength{\itemsep}{0pt}
     \setlength{\parsep}{0pt}
    \setlength{\topsep}{0pt}
    \setlength{\partopsep}{0pt}
    \setlength{\leftmargin}{2em}
    \setlength{\labelwidth}{1.5em}
    \setlength{\labelsep}{0.5em} } }
\newcommand{\squishend}{
  \end{list}  }


\newtheorem*{condition*}{Condition}

\newtheorem{theorem}{Theorem}[section]
\newtheorem{proposition}{Proposition}[section]
\newtheorem{lemma}{Lemma}[section]
\newtheorem{prop}[lemma]{Proposition}
\newtheorem{corollary}[lemma]{Corollary}
\newtheorem*{definition*}{Definition}
\theoremstyle{remark}
\newtheorem{remark}{Remark}[section]


\title{Discrete ABP Estimate and Convergence Rates for Linear
    Elliptic Equations in Non-divergence Form}

\author{Ricardo H. Nochetto}
\address{Department of Mathematics and Institute for Physical
  Science and Technology, University of Maryland, College Park}
\curraddr{}
\email{rhn@math.umd.edu}
\thanks{Both authors were partially supported by NSF Grants
  DMS-1109325 and DMS-1411808.}

\author{Wujun Zhang}
\address{    Department of Mathematics, Rutgers University}
\curraddr{}
\email{wujun@math.rutgers.edu}
\thanks{The second author was partially supported by the Brin
  Postdoctoral Fellowship of the University of Maryland}

\begin{document}
\maketitle
\keywords

\begin{abstract}
  We design a two-scale finite element method (FEM) for linear
  elliptic PDEs in non-divergence form $A(x) : D^2 u(x) =
  f(x)$ in a bounded but not necessarily convex
  domain $\Omega$ and study it in the max norm.
  The fine scale is given by the meshsize $h$ whereas the
  coarse scale $\epsilon$ is dictated by an integro-differential
  approximation of the PDE. We show that the FEM satisfies
  the discrete maximum principle (DMP) for any uniformly positive
  definite matrix $A$ provided that the mesh is face weakly
  acute. 
  We establish a discrete Alexandroff-Bakelman-Pucci (ABP) estimate
  which is suitable for finite element analysis. Its proof relies on
  a discrete Alexandroff estimate which expresses the min of a
  convex piecewise linear function in terms of the measure of its
  sub-differential, and thus of jumps of its gradient. 
  The discrete ABP estimate leads, under suitable regularity
  assumptions on $A$ and $u$, to pointwise error estimates of the form
\begin{equation*}
\inftynorm{u - u^{\epsilon}_h} \leq \, C(A,u) \, h^{2\alpha /(2 + \alpha)} \Abs{\ln h}
\qquad 0< \alpha \leq 2,
\end{equation*}
provided $\epsilon \approx h^{2/(2+\alpha)}$.
Such a convergence rate is at best of order 
$ h \Abs{\ln h}$, which turns out to be quasi-optimal.

\smallskip
\noindent \textsc{Keywords.} piecewise linear finite elements,
  discrete maximum principle,
  discrete Alexandroff estimate, discrete Alexandroff-Bakelman-Pucci estimate,
  elliptic PDEs in non-divergence form, 2-scale approximation,
  maximum-norm error estimates

\smallskip
\noindent \textsc{Mathematics Subject Classification. 65N30, 65N15,
  35B50, 35D35, 35J57}
\end{abstract}

\smallskip
\begin{center}
Communicated by Endre S{\"u}li
\end{center}

\pagestyle{myheadings}
\thispagestyle{plain}
\markboth{R.H. Nochetto and W. Zhang}{FEM for elliptic equations in non-divergence form}

%
                            \section{Introduction}\label{S:introduction}
%

We consider second order elliptic equations in non-divergence form, 
\begin{subequations}
\label{pde}
\begin{align}\label{pde_a}
Lu(x) =  A(x) : D^2 u(x) &= f(x)  \quad &&\text{in $\dm$}
\\
\label{pde_b}
u &= 0 \quad &&\text{on $\bdry$},
\end{align}
\end{subequations}
where $\dm$ denotes a bounded but not necessarily convex
domain in $\mathbb{R}^d$ ($d \ge 2$) with
$C^{1,1}$ boundary $\partial\dm$, $f \in L^d(\dm)$ and  $A$ is a
measurable $d \times d$ matrix-valued function satisfying the uniformly
ellipticity condition for a.e. $x\in\Omega$:
\begin{align}
\label{elliptic_condition}
\lambda I \leq A (x) \leq \Lambda I ,
\end{align}
for some positive constants $\lambda$  and $\Lambda$ with
moderate aspect ratio $\Lambda/\lambda\ge1$.
Moreover, we assume the vanishing Dirichlet condition
\eqref{pde_b} only for simplicity.

The elliptic PDEs \eqref{pde_a} in non-divergence form arises in
linearization processes of fully nonlinear PDEs. The latter in turn arise in
stochastic optimal control, nonlinear elasticity, fluid dynamics,
image processing, materials science, and mathematical
finance. They are thus ubiquitous in science and
engineering. \looseness=-1

The structure of \eqref{pde_a} is deceivingly simple.
For example, \eqref{pde} with forcing
$f=0$ and discontinuous coefficient $A$ given by
\[
    A(x) = I_{d \times d} + \frac{d + \alpha -2}{1-\alpha}
    \frac{x}{|x|} \otimes \frac{x}{|x|}
\]
admits two solutions in the unit ball $B_1(0)$ centered at $0$, namely
$u(x) = |x|^{\alpha} - 1$ and $u(x)= 0$, which
happen to be of class $H^2(\Omega)$ provided $d>2(2-\alpha)$ for any
$0<\alpha<1$. Several notions of solutions of \eqref{pde} are
available in the literature:
\squishlist
\item
{\it $H^2$-solutions.}  
For $d=2$, $\Omega$ convex, $A \in L^{\infty}(\dm)$ and $f \in L^2(\dm)$,
S.~N. Bernstein established the $H^2$-regularity of $u$ along with the
bound \cite{Bernstein1910}, \cite[Chapter 3, section 19]{LadyzhenskayaUraltseva68}
\begin{equation}\label{H2-bound}
      \norm{u}_{H^2(\dm)} \leq C \norm{f}_{L^2(\dm)} \, .
\end{equation}
For $d\ge2$, if the coefficient matrix
$A=(a_{ij})_{i,j=1}^d$ satisfies the Cord\`{e}s condition
\begin{equation}\label{cordes}
  (d -1 + \epsilon) \sum_{i, j= 1}^d a_{ij}^2 \leq \left( \sum_{i=1}^d a_{ii} \right)^2
\end{equation}
with $\epsilon>0$, and $\Omega$ is convex,
then there is a unique strong solution $u \in H^2(\Omega) \cap H^1_0(\Omega)$
satisfying \eqref{H2-bound};
see \cite{MaugeriPalagachevSoftova00}. This condition is valid for any
$A \in L^{\infty}(\dm)$ satisfying \eqref{elliptic_condition} for
$d=2$, thereby being consistent with \cite{Bernstein1910,LadyzhenskayaUraltseva68},
but imposes a restriction on off-diagonal elements and
the ratio $\Lambda/\lambda$ for $d > 2$.
\item
{\it Strong solutions.} For $d \geq 2$ if $A \in C(\overline{\Omega})$ and $\Omega$ is of
  class $C^{1,1}$,
  then the Calder\'on-Zygmund theory guarantees the existence and
  uniqueness of solutions $u \in W^{2}_p(\dm)$ for any $f \in
  L^p(\dm)$ along with the stability bound \cite{GilbargTrudinger01}
\begin{align}\label{CZ}
  \norm{u}_{W^2_p(\dm)} \leq C \norm{f}_{L^p(\dm)}
  \qquad \text{for $1 < p < \infty$.}
\end{align}
This theory extends to vanishing mean oscillation matrices $A \in \text{VMO}(\dm)$
with uniform VMO-modulus of continuity
\cite{ChiarenzaFrascaLongo91,ChiarenzaFrascaLongo93}; see \eqref{vmo}
below for a definition.
\item
{\it Classical solutions.}
For $d \geq 2$, if $A \in C^{0, \alpha}(\overline\dm)$ and $\Omega$ is
of class $C^{2,\alpha}$ for some $0<\alpha < 1$,
then the Schauder theory guarantees the
existence and uniqueness of a solution $u \in
C^{2,\alpha}(\overline\dm)$ for any $f\in C^{0,\alpha}(\overline\dm)$
along with the bound \cite{GilbargTrudinger01}
\begin{equation}\label{schauder}
    \norm{u}_{C^{2,\alpha}(\overline\dm)} \leq C \norm{f}_{C^{0,\alpha}(\overline\dm)}.
\end{equation}
\item
{\it Viscosity solutions:}
Weaker notions of solutions, such as $L^p$-viscosity solutions
\cite{CCKS:96} and
good solutions \cite{Jensen95}, exists to deal with
discontinuous coefficients. However, no comparison principle has been
proved except when a strong $W^2_p$ solution exists, in which case
it coincides with the $L^p$-viscosity solution for $p \ge d$.
The famous non-uniqueness
counterexample of Nadirashvili \cite{Nadirashvili97} for $d\geq 3$,
further studied by M. Safonov \cite{Safonov99},
shows that there cannot be a comparison principle for \eqref{pde} with
discontinuous coefficients.
Moreover, one can construct two sequences of regularized matrices
$\{A_k^i\}_{k=1}^\infty$ for $i=1,2$ converging to the same limit $A$
but such that the corresponding solutions $\{u_k^i\}_{k=1}^\infty$ of \eqref{pde}
converge uniformly as $k\to\infty$ to different limits $u^i$ which are both
$L^p$-viscosity solutions of \eqref{pde}.
\squishend

In contrast to an extensive numerics literature for elliptic PDEs in divergence form, the numerical approximation for PDEs in non-divergence form reduces to a few papers. 
Among these, we mention
the discrete Hessian method of O. Lakkis and T. Pryer \cite{LakkisPryer11},
  the DG methods of I. Smears and E. S{\"u}li \cite{SmearsSuli13} for
  the Cord\`{e}s condition \eqref{cordes} as well as
  the $H^1$-conforming method of X. Feng,
  L. Hennings and M. Neilan \cite{FengHenningsNeilan} and the weak
  Galerkin method of C. Wang and J. Wang \cite{WangWang}, both for
  coefficients $A\in C(\overline{\Omega})$. In \cite{FengHenningsNeilan, SmearsSuli13,
    WangWang}, the FEMs are shown to be stable in the broken
  $H^2$-seminorm via suitable discrete inf-sup conditions. Moreover,
  they prove optimal error estimates in the broken $H^2$-seminorm
  under either suitable local regularity assumptions on $u$
  \cite{FengHenningsNeilan, SmearsSuli13}
  or global ones  \cite{WangWang}.

The numerics literature is relatively larger for
fully nonlinear second order elliptic PDEs. The following papers
are somewhat related to this one: the augmented Lagrangian approach by
E.J. Dean and R. Glowinski \cite{DeanGlowinski03},
the finite element method
by M. Jensen and I. Smears \cite{JensenSmears13} and I. Smears and
E. S\"{u}li \cite{SmearsSuli14} for the Hamilton-Jacobi-Bellman (HJB) equation,
the finite difference method by J. D. Benamou, B. Froese, A. Oberman
\cite{BenamouFroeseOberman14} for optimal transportation,
and semi-Lagrangian
methods for linear and nonlinear elliptic problems by K. Debrabant and
E. R. Jakobsen \cite{DebrabantJakobsen13}, by J. F. Bonnans and
H. Zidani \cite{BonnansZidani03} and by F. Camilli and
M. Falcone \cite{CamilliFalcone95}.
The latter methods deal with two scales, the finer one being related to the
mesh and the coarser scale being dictated by a nodal
(wide stencil) finite difference operator
which ensures monotonicity and consistency; this feature
is known for finite difference approximations of elliptic PDEs
\cite{MotzkinWasow52, Kocan95} and is also
present in our finite element construction below.
We also refer to the books \cite{FlemingSoner06, KushnerDupuis01}, and
references therein, for numerical methods for the HJB equation which built on
its probabilistic interpretation.

G. Barles and P. Souganidis have proposed an abstract framework for
uniform convergence to viscosity solutions which hinges on
stability, monotonicity, and operator consistency \cite{BarlesSouganidis91}.
These properties are
tricky to enforce simultaneously. If $\Th$ is a quasi-uniform mesh of
size $h$, then we say that a discrete operator
$L_h$ is {\it monotone} if, for any two discrete
functions $u_h \leq v_h$ with equality at node $x_i$, then
\begin{equation}\label{monotonicity-0}
L_h u_h (x_i) \leq L_h v_h (x_i).
\end{equation}
We say that $L_h$ is {\it consistent} if for every $\varphi \in C^2(\dm)$, 
\begin{equation}\label{consistency-0}
  \lim_{h \rightarrow 0} L_h [{I_h} \varphi] (x_h) = A(x_0) : D^2 \varphi(x_0) 
\qquad
\text{ for all sequences $x_h \rightarrow x_0$, }
\end{equation}
where ${I_h} \varphi$ denotes the Lagrange interpolant of $\varphi$.
Consider now the centered
finite difference approximation of the Hessian using a nine-point stencil 
\[
D^{ij}_{h} u (x) =  \frac {1} {4 h^2} \Big(u(x + h e_i + h e_j) - u(x + h e_i - h e_j) 
- u(x - h e_i + h e_j) + u(x - h e_i - h e_j) \Big),
\]
which is consistent but not monotone. In fact, if 
$u_h(x + h e_1 + h e_2) = - 4 h^2$ and $u_h = 0$ at the other eight nodes,
then the discrete Hessian is 
$D^2_{h} u_h (x) = \bigl(\begin{smallmatrix}
0&-1\\ -1&0
\end{smallmatrix} \bigr).$
If $A (x) = \bigl(\begin{smallmatrix}
1&- 1/2\\ -1/2&1
\end{smallmatrix} \bigr),$
then $A(x) : D^2_h u_h(x)  = 1$ which violates \eqref{monotonicity-0}
when compared with $v_h=0$. 

The finite element Laplacian $\laplace_h$
for any piecewise linear function $v_h$ is given by
\begin{align}\label{discrete-laplace}
\laplace_h v_h (x_i) := - \left(\int_\Omega \phi_i\right)^{-1}
\int_{\dm} \nabla v_h \cdot \nabla \phi_i 
\end{align}
where $\phi_i$ is the hat function associated with node $x_i$.
On {\it weakly acute} meshes $\Th$, $\laplace_h$ satisfies
\eqref{monotonicity-0} (see \eqref{weaklyacute} and Lemma \ref{Monotonicity}),
but it might not satisfy \eqref{consistency-0} even on uniform meshes, namely
\begin{equation}\label{lack-consistency}
\laplace_h I_h u(x_i)  \quad\not\rightarrow \quad
\laplace u(x_i)\qquad\textrm{as } h\to0.
\end{equation}
To see this, we consider an example from
\cite[p. 146]{JensenSmears13}: let $\Th$ be the mesh in $\mathbb{R}^2$
consisting of four triangles whose vertices are 
$z_0 = (0, 0)$, 
$z_1 = (h, 0)$, 
$z_2 = (0, h)$,
$z_3 = (-h, 0)$,
$z_4 = (0, -h)$;
if $u(x_1,x_2) = x_1^2 + x_2^2$, then a simple calculation yields
\begin{align}\label{inconsistency}
\Delta_h I_h u (z_0) = 6 \neq 4 = \laplace u(z_0)
\qquad\forall h>0.
\end{align}
This shows that \eqref{consistency-0} is too
restrictive for finite element analysis, which was already observed
and circumvented by M.~Jensen and I.~Smears \cite{JensenSmears13}.

Regarding rates of convergence in the max norm for viscosity
solutions of fully nonlinear PDEs, 
we refer to H-J.~Kuo and N. Trudinger \cite{KuoTrudinger92},
L. Caffarelli and P. Souganidis \cite{CaffarelliSouganidis08}, 
N. V. Krylov \cite{Krylov97, Krylov00},
and  G. Barles and E. R. Jakobsen \cite{BarlesJakobsen05}. 

Our primary goal in this paper is to design a two-scale
finite element method for \eqref{pde}, which is monotone and operator
consistent, study its stability properties
and derive rates of convergence in the max norm
within the context of classical solutions, thereby requiring at least
$C^{1,1}$ domains for regularity purposes.
To this end we develop a novel technical tool
for any bounded domains, a discrete Alexandroff-Bakelman-Pucci
(ABP) estimate which mimics the continuous ABP estimate; the latter is a
conerstone in the theory of fully nonlinear elliptic PDEs. To
introduce the coarse scale $\epsilon$, let's assume for the moment that the
coefficient matrix $A$ is uniformly continuous in $\Omega$ and rewrite
\eqref{pde_a} as follows
\begin{align}\label{pde2}
A(x) : D^2 u(x) = \frac{\lambda}{2} \laplace u(x) + \Big(A(x) - 
\frac{\lambda}{2} I\Big) : D^2 u(x),
\end{align}
where the second term is still elliptic thanks to the ellipticity
condition \eqref{elliptic_condition}.
Our method hinges
on the approximation of \eqref{pde_a}, and thus of \eqref{pde2}, by a
linear {\it integro-differential} operator proposed by L. Caffarelli and
L. Silvestre in \cite{CaffarelliSilvestre10}
\begin{equation}\label{integro-differential-operator}
L_\epsilon \ue (x):= \; \frac{\lambda} {2} \laplace \ue (x)
+ \Ie \ue (x) = f(x)
\qquad \textrm{for all } x\in\dm,
\end{equation} 
where
\begin{equation}\label{Ie}
\Ie v(x) := \; \int_{\mathbb{R}^d} \frac{\delta v(x,y)}{\epsilon ^ {d+2}
  \det(M(x))} \varphi \Big(\frac { M^{-1}(x) y} {\epsilon} \Big) dy .
\end{equation}
Hereafter, 
$\varphi(y)$ is a radially symmetric function with compact support in the unit ball and $\int_{\mathbb R^d} \abs{y}^2 \varphi(y) dy = d$ where $d$ is the dimension of $\dm$,   
\begin{align}\label{M}
M(x) := \left( A(x) - \frac{\lambda}{2}I \right)^{1/2}
\end{align}
and 
$$
\delta v(x,y) := v(x+y) + v (x-y) - 2 v(x)
$$ 
is the centered second difference operator
with suitable modifications near $\bdry$; see \eqref{extension}. The operator \eqref{Ie} is a consistent approximation of $\left( A(x) - \frac{\lambda}{2} I \right) : D^2 v(x)$ in the sense that if $v$ is a quadratic polynomial, then
\begin{align}\label{quadratic}
\Ie v(x)  = \left( A(x) -  \frac{\lambda}{2}I \right) : D^2 v (x)  \qquad & \text{for all $\epsilon > 0$, } 
\end{align}
To see this, note that 
$
\delta v(x,y) = (y \otimes y) : D^2 v(x)  
$
for quadratic $v$ where $\otimes$ denotes the tensor product.
Since
\begin{align}\label{Iquadratics}
\Ie v(x) = \int_{\mathbb{R}^d} \frac{y \otimes y}{\epsilon ^ {d+2}
  \det (M(x))} \varphi \left( \frac { M^{-1}(x) y} {\epsilon} \right) dy : D^2 v(x) ,
\end{align}
by definition, the change of variable $z = M^{-1}(x) y / \epsilon$ yields 
\[
\Ie v(x) = M(x) \left ( \int_{\mathbb{R}^d} z \otimes z \varphi(z) dz \right ) M(x) : D^2 v(x) .
\]
Since $\varphi(z)$ is radially symmetric, we have $\int z_i z_j\varphi(z) dz = 0$
if $i \ne j$, as well as
\[
\int_{\mathbb{R}^d}  z_1^2 \varphi(z) dz = \cdots = \int_{\mathbb{R}^d}  z_d^2 \varphi(z) dz = \frac 1 d \int_{\mathbb{R}^d} \abs z ^2 \varphi(z) dz = 1. 
\]
We thus obtain
$
\int_{\mathbb{R}^d} z \otimes z \varphi(z) dz = I 
$ 
and 
\[
  \Ie v(x) = M(x)^2 : D^2 v(x) = \left( A(x) - \frac{\lambda}{2}I  \right) : D^2 v(x).
\]

We now consider a sequence of conforming quasi-uniform meshes $\{\Th\}$, made of shape
regular simplices, which induces polytope approximations $\dm_h$ of $\dm$
with boundary nodes of $\partial\dm_h$ lying on $\partial\dm$. Since
we assume throughout, except for section \ref{S:discrete-ABP}, that
$\partial\dm$ is at least $C^{1,1}$ there is a discrepancy between $\dm$ and
$\dm_h$ to account for. Given the technical nature of this endeavor,
which would complicate our discussion without adding substance,
we make the simplifying assumption that $\dm_h=\dm$; see subsection
\ref{subsec:Galerkinprojection} for further details.
We approximate $L_\epsilon u_\epsilon(x) = f(x)$ by
\begin{align}\label{intro:FEM}
  L_h^\epsilon u_h^\epsilon(x_i)
  :=\frac{\lambda}{2} \Delta_h \ue_h(x_i)  + \Ie \ue_h(x_i)  = f_i
\qquad\textrm{for all} \,x_i \in\Nh,
\end{align}
where $\ue_h = \sum_{x_j\in\Nh} U_j \phi_j$ is a continuous and
piecewise affine finite element function, $\Nh$ is the set
of internal nodes of $\Th$, and $f_i :=\int_\dm f \phi_i / \int_\dm \phi_i$.
The meshsize $h$ gives the fine scale of \eqref{intro:FEM} in that $\epsilon$ and $h$ satisfy $\epsilon\ge C h \abs{\ln h}^{1/2}$.
The integral $\Ie \ue_h(x_i)$ is simple to compute using quadrature 
because the kernel is smooth and $M(x)$ is evaluated at $x = x_i$. 
All the results in this paper are valid provided
the quadrature rule is locally
supported, consistent and positive; see subsection \ref{subsec:quadrature}.

We derive rates of convergence in the maximum norm for
\eqref{intro:FEM} in the context of classical solutions.  In
contrast to \cite{FengHenningsNeilan, SmearsSuli13, WangWang}, we do
not show an inf-sup condition to deal with the maximum norm. The main
difficulty is indeed to establish an alternative notion of
stability. We first prove that \eqref{intro:FEM} is a monotone FEM
provided that the meshes $\{\Th\}$ are weakly acute; see \eqref{weaklyacute}.
We next recall a fundamental stability property of \eqref{pde}, namely
the celebrated Alexandroff-Bakelman-Pucci (ABP) estimate.
The ABP estimate for \eqref{pde} reads \cite{CaffarelliCabre95, HanLin}:
\begin{equation}\label{ABP}
\sup_{\dm} u^- \leq C \Big ( \int_{\{u = \Gamma (u)\}} |f(x)|^d dx \Big)^{1/d} ,
\end{equation}
where $u^-(x) = \max\{-u(x), 0\}$ is the negative part of $u$
and $ \{u = \Gamma (u)\} $ denotes the (lower) contact set of $u$ with
its convex envelope $\Gamma (u)$; see \eqref{convexenvelop} and
\eqref{contactset}. This estimate gives a bound for $u^-$ while a
bound for the positive part $u^+$ can be derived in the same fashion by considering a concave envelope and corresponding (upper) contact set. A combination of both estimates yields stability of the $L^{\infty}$-norm of $u$ in terms of the $L^d$-norm of $f$. 
We establish Theorem \ref{discrete_ABP} (discrete ABP estimate)
\begin{equation}\label{intro:ABP}
\sup_{\dm} \, (\ue_h)^- \leq C \left( 
\sum_{\{x_i: \ue_h(x_i) = \Gamma (\ue_h)(x_i) \}} |f_i|^d \abs {\omega_i} \right)^{1/d},
\end{equation}
where $\{x_i: \ue_h(x_i) = \Gamma (\ue_h)(x_i) \}$ denotes the {\it (lower)
nodal contact set}, defined in \eqref{discrete-contactset} and
$\abs{\omega_i}$ stands for the volume of the star $\omega_i := \text{supp}
\; \phi_i$ associated with the node $x_i\in\Nh$. Note that the
nodal contact set is just a collection of nodes. The estimate
\eqref{intro:ABP} hinges on Proposition \ref{Alexandroff}
({\it discrete Alexandroff estimate}), which is of intrinsic interest.
It is worth mentioning that the estimates in section \ref{S:discrete-ABP}
do not require any regularity of the domain $\dm$ which is just
assumed to be bounded.
This undertaking is somewhat related to early work in the maximum
norm for linear elliptic PDE in divergence form by Ph. Ciarlet and
P.A. Raviart \cite{CiarletRaviart73}.

We would like to mention that a discrete ABP estimate is proved in
\cite{KuoTrudinger00} for finite differences on general meshes under the
assumption that the discrete operator is monotone. 
Compared with \cite{KuoTrudinger00}, the novelties of this paper are
the following:
\squishlist
\item We give a novel proof of discrete ABP estimate, which is more
  geometric in nature and suitable for FEM: it is
  based on a geometric characterization of the sub-differential of
  piecewise linear functions and control of its Lebesgue measure by
  jumps of the normal flux.
\item  The estimate in \cite{KuoTrudinger00} is sub-optimal when
  applied to our finite element method \eqref{intro:FEM}. In
  fact, it replaces the measure of star $\abs {\omega_i}\approx h^d$
  in \eqref{intro:ABP}, which corresponds to the fine scale $h$,
  by the volume $\approx\epsilon^d$ of a ball used to define
  \eqref{Ie}. The two estimates thus differ by a multiplicative factor
  $\epsilon/h \gg 1$, the ratio of scales, 
  which is responsible for suboptimal decay rates.
\item 
  Upon combining our discrete ABP estimate with operator consistency of
  \eqref{intro:FEM} in $L^\infty(\Omega)$, we derive pointwise
  rates of convergence under natural regularity
  requirements of $u$ in H\"older spaces, i.e. in the realm of
  classical solutions. We also exploit that operator consistency is
  measured in a discrete $L^d$ norm in \eqref{intro:ABP} to
  establish convergence rates for piecewise smooth solutions $u$.
\squishend

Our 2-scale FEM \eqref{intro:FEM}
extends to certain classes of discontinuous coefficients.
We recall that $A \in \text{VMO}(\dm)$, the space of {\it vanishing
mean oscillation} functions,
if the mean oscillation of $A$ satisfies for all $x\in\dm$
\begin{equation}\label{vmo}
  \sup_{\rho\le r}\frac 1 {|B_{\rho}(x)\cap\dm|}\int_{B_{\rho}(x)\cap\dm}
  | A(y) - A_{\rho}(x)| \, dy \, \le \eta(r)  \to 0 
  \quad
  \text{as $r \to 0$},
\end{equation}
where $A_{\rho}(x)$ is the mean-value of $A$ in a ball $B_{\rho}(x)$ of center $x$
and radius $\rho$ 
\[
  A_{\rho}(x) := \frac 1 {|B_{\rho}(x)\cap\dm|}
  \int_{B_{\rho}(x)\cap\dm} A(y) \, dy;
\]
function $\eta$ is the so-called VMO-modulus of continuity of $A$.
Since neither $A(x)$ nor $M(x)$ may be well
defined at each node $x=x_i$, and this is critical in
\eqref{intro:FEM}, we replace nodal values of $A$ at $x_i$ by the
means $\bar{A}(x_i)$ of $A$ over the star $\omega_i$ of $x_i$
\begin{align}
  \label{mean}
\bar{A}(x_i) := \frac{1}{\abs{\omega_i}} \int_{\omega_i} A(x) \, dx,
\qquad
M(x_i) := \left(\bar{A}(x_i) - \frac{\lambda}{2} I \right)^{1/2},
\end{align}
in the definition \eqref{Ie} of $\Ie \ue_h(x_i)$.
We prove uniform convergence in Corollary \ref{convergence-C2}
provided $u\in C^2(\dm)$ and $\epsilon = C_0 h |\ln h|$.
Obviously, the accuracy of the solution $\ue_h$ depends on the
approximation quality of $A$ by its mean. 
We show that if 
\begin{align}\label{Ldassumption}
\left( \sum_{x_i \in \Nh} \int_{\omega_i} \abs{ A(x) - \bar{A}(x_i)}^d
\, dx \right)^{1/d} \leq C h^{\beta}
\quad \text{and} \quad
u \in C^{2, \alpha}(\overline\dm)
\end{align}
with $\frac{2\alpha}{2+ \alpha} \leq \beta \leq \alpha$ and 
$\epsilon = C_1\left( h^2 \abs{\ln h} \right)^{\frac{1}{2 + \alpha}}
  $ for an arbitrary constant $C_1>0$, then
$$
\inftynorm { u - \ue_h } \leq C \left( h^2 | \ln{h} | 
\right)^{\frac{\alpha}{2 + \alpha}}
\|u\|_{C^{2,\alpha}(\overline{\Omega})} \, ; 
$$ 
see Corollary \ref{convergencerate-C2Holder}. 
Note that, according to \eqref{schauder},
the $C^{2, \alpha}(\overline{\Omega})$ regularity assumption on $u$ is
guaranteed by $A,f \in C^{0,\alpha}(\overline{\Omega})$
and $\dm$ being of class $C^{2,\alpha}$
\cite{Caffarelli88,GilbargTrudinger01}, which is consistent with
\eqref{Ldassumption}.
For $u \in C^{3, \alpha}(\overline{\Omega})$ instead,
we impose $\frac{2+2\alpha}{3+\alpha} \le \beta \le 1$ and
$\epsilon = C_2 h^{\frac{2}{3+\alpha}}$ 
for an arbitrary constant $C_2>0$, to
show in Corollary \ref{convergencerate-C3Holder} that
$$
\inftynorm { u - \ue_h } \leq C h^{\frac{2(1+ \alpha)}{3 + \alpha}}
|\ln{h}| \, \|u\|_{C^{3,\alpha}(\overline{\Omega})}\, .
$$
We stress that for $\alpha = 1$, we obtain a nearly linear decay rate 
$
\inftynorm{ u - \ue_h } \leq h |\ln h|,
$
which turns out to be optimal for our method.

We further allow $u$ to be piecewise $C^{2,\alpha}$ in a
collection of disjoint subdomains $\{\dm_j\}_{j=1}^J$ with Lipschitz
boundaries $\partial\dm_j$. We exploit that \eqref{intro:ABP} measures
operator consistency in a discrete $L^d$-norm to set
$\epsilon = C_3 \left( h^2 \abs{\ln h}\right)^{\frac{d}{1+2d}}$ and show
\[
\inftynorm {u - \ue_h} \leq 
C \big( h^{2} |\ln h| \big)^{\frac{1}{1+2d}}
\]
in Corollary \ref{convergencerate-C11}
without requiring that $\partial\dm_j$ aligns with the mesh $\Th$. This
accounts for a special but important class of discontinuous coefficients
\cite{Kim07,Lorenzi72}.

Our two-scale FEM is a compromise between the fine scale accuracy
provided by the discrete Laplace operator $\Delta_h$ and the
monotonicity and stability achieved at the coarse scale $\epsilon$ by
the integral operator $I_\epsilon$ in \eqref{Ie}. This also explains
why the geometric mesh restriction of weak acuteness
is unrelated to the coefficient matrix $A$ but to the identity: it guarantees monotonicity of $\Delta_h$! 
The coarser scale enhances the stability of \eqref{intro:FEM} at the cost of
additional coarser scale error which reduces the fine scale accuracy;
this is somewhat related to wide stencil techniques
\cite{BonnansZidani03,CamilliFalcone95,DebrabantJakobsen13}.
The enhanced stability enables us to establish $L^{\infty}$ estimates
based on the ABP maximum principle, instead of variational techniques
as in \cite{FengHenningsNeilan, SmearsSuli13, WangWang}.
Our method requires regularity of $u$ beyond $C^2(\overline{\Omega})$
whereas those in \cite{FengHenningsNeilan, SmearsSuli13, WangWang}
require regularity beyond $H^2(\Omega)$. It is worth stressing that,
due to the structure of the ABP estimate, such a regularity assumption
is only required to hold piecewise with discontinuities of the Hessian
$D^2u$ of $u$ not necessarily aligned with the mesh.

The rest of this paper is organized as follows. 
In Section \ref{S:approximation}, we describe the approximation \eqref{integro-differential-operator} of \eqref{pde} proposed by L. Caffarelli and L. Silvestre \cite{CaffarelliSilvestre10}. 
We introduce finite element methods and show the monotonicity property 
in Section \ref{S:FEM}. 
We next discuss the classical ABP estimate in Section \ref{S:ABP} and
apply it to derive the error estimate $\inftynorm{ u - \ue} \leq C
\epsilon^{\alpha}$ provided $u \in C^{2, \alpha}(\overline{\dm})$, where $\ue$ is the
solution of the integro-differential equation \eqref{integro-differential-operator}.
In Section
\ref{S:discrete-ABP}, we prove our discrete Alexandroff estimate, which
has some intrinsic interest and is instrumental to derive convergence
rates for the Monge-Amp\`ere
equation \cite{NochettoZhang16}. We also derive our discrete
ABP estimate, which hinges only on the mesh $\Th$ being face weakly acute. 
Utilizing the discrete ABP estimate, we establish several rates of convergence depending on solution and data regularity in Section \ref{S:error-estimate}. 
We conclude in Section \ref{S:numerics} with numerical experiments which
explore properties and limitations of our FEM.

%
       \section{Approximation of uniformly elliptic equations} \label{S:approximation}
%

In this section, we discuss the approximation proposed by
L. Caffarelli and L. Silvestre in \cite{CaffarelliSilvestre10}
for the linear elliptic PDE in non-divergence form 
\eqref{pde} by the integro-differential equation \eqref{integro-differential-operator}. 
We also propose a modification of the second difference $\du x y$ near
$\bdry$ which avoids extending the functions outside $\Omega$. 

%
                      \subsection{Integro-differential equation}
%

Let $\varphi$ be a radially symmetric function with compact support in
the unit ball and $\int_{\mathbb{R}^d} \abs x ^2 \varphi (x) = d$.
Given a continuous function $u$, we let $\Ie u$ be the integral transform
\begin{equation}\label{integral}
\Ie u(x) := 
\int_{\mathbb{R}^d} \du x y K_\epsilon(x,y) dy ,
\end{equation}
where the kernel  
\begin{equation}\label{kernel}
  K_\epsilon(x,y) :=  \frac{1}{\epsilon ^ {d+2} \text{det}(M(x))} \varphi \left( \frac { M^{-1}(x) y} {\epsilon} \right)
\end{equation}
has support contained in the ball $B_{Q \epsilon} (0)$ with radius
$Q\epsilon$ where $Q = (\Lambda - \frac 12 \lambda)^{1/2}$.
If $u$ is just defined in $\dm$, then the integral in \eqref{integral}
is problematic for values of $x$ close to $\bdry$ unless $u$ is
suitably extended outside $\dm$; an $H^1$ extension is used in
\cite{CaffarelliSilvestre10} which restricts the order of
accuracy. Our goal is to avoid an extension by suitably modifying the
definition of $\du x y$ for $x$ near $\bdry$ and at the same time
retain exactness for quadratic polynomials.
To this end, we denote the region bounded away from the boundary by
\begin{equation}\label{Omega-eps}
\dme = \{ x \in \dm: \;{\rm dist}(x , \bdry) > Q \epsilon   \},
\end{equation}
and note that the $\du x y$ is well defined only for $x \in \dme$. 
If the line connecting $x$ with either $x+y$ or $x-y$ is not contained in the domain $\dm$, let $\theta \in (0, 1)$ be the largest number such that $x \pm \theta y \in \dm$ for all $y \in B_{Q\epsilon}$, define 
\begin{align}\label{extension}
\du x y :=  \; \frac{ u(x - \theta y) + u(x + \theta y) - 2 u(x)}{\theta^2 },
\end{align}
and note that $\du x y = D^2 u(x) : (y \otimes y)$
provided $u$ is a quadratic polynomial.

We now approximate the equation \eqref{pde} by the integro-differential equation
\begin{align}
\label{pde_approx}
\Le \ue (x) = \frac{\lambda}{2} \laplace \ue (x) + \Ie \ue (x) = f(x) \qquad \text{ in $\dm$ } .
\end{align}
We refer to \cite{CaffarelliSilvestre10} for details about  
the existence and uniqueness of solution $u_\epsilon$.

%
\subsection {Rate of convergence of integral transform $\Ie u(x)$} 
%

The convergence rate of $\Ie u(x)$ depends on the regularity of the
function $u$, and is established below. 

\begin{lemma}[approximation property of $\Ie$]\label{approximation}
Let $\Ie u (x)$ be the integral transform defined by
\eqref{integral}-\eqref{extension} with $M = M(x)$ given in \eqref{M},
and let $U_{Q\epsilon}(x) := \overline{B}_{Q\epsilon}(x)\cap\overline{\Omega}$.
\begin{enumerate}
\item
If $u \in C^2(\overline{\dm})$, then
$
\Ie u(x) \rightarrow (A(x) - \frac{\lambda}{2}I) : D^2 u (x)
$
as $\epsilon \to 0$ for all $x \in \Omega$.
\item
If ${\rm dist}( x , \bdry ) \leq Q \epsilon$ and $ u \in C^{2, \alpha} 
(U_{Q\epsilon} (x) ) $ for $0 < \alpha \leq 1$, then
\begin{align*}
  \Big| \Ie u(x) - \Big(A(x) - \frac{\lambda}{2}I\Big) : D^2 u (x) \Big|
  \leq C |u|_{C^{2,\alpha}(U_{Q\epsilon}(x))}\theta^\alpha\epsilon^{\alpha}. 
\end{align*}
\item
If $x \in \dme$ and $ u \in C^{2+k,\alpha}(U_{Q\epsilon}(x))$
for $k = 0, 1$ and $0 < \alpha\leq 1$, then
\begin{align*}
\Big| \Ie u(x) - \Big(A(x) - \frac{\lambda}{2}I \Big) : D^2 u (x) \Big| 
\leq C |u|_{C^{2+k,\alpha}(U_{Q\epsilon}(x))}\epsilon^{k+ \alpha} .
\end{align*}
\end{enumerate}
\end{lemma}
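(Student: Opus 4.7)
The overall strategy is to combine a Taylor expansion of $u$ around $x$ with the exact identity \eqref{quadratics}, which states that $I_\epsilon$ reproduces $(A(x)-\frac\lambda2 I):D^2 u(x)$ on quadratic polynomials. Writing $\delta u(x,y) = D^2u(x):(y\otimes y) + R_u(x,y)$ (with $R_u$ defined by the appropriate variant of Taylor's formula on either side), the computation done right after \eqref{quadratics} for the quadratic part gives
\begin{equation*}
I_\epsilon u(x) - \Big(A(x) - \tfrac{\lambda}{2} I\Big) : D^2 u(x)
 = \int_{\mathbb{R}^d} R_u(x,y)\, K_\epsilon(x,y)\, dy .
\end{equation*}
The whole lemma is then a matter of bounding this remainder integral under the three regularity scenarios, using the scaling $|y|\le Q\epsilon$ on $\mathrm{supp}\, K_\epsilon(x,\cdot)$ and the change of variables $z = M^{-1}(x)y/\epsilon$, which turns every factor $|y|^m$ in the integrand into $\epsilon^m$ times an absolute constant depending only on $\varphi$, $\lambda$, $\Lambda$.

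For the interior case (c) with $k=0$, Taylor's theorem with the $C^{2,\alpha}$ seminorm gives $|R_u(x,y)|\le C\|u\|_{C^{2,\alpha}}|y|^{2+\alpha}$ on $U_{Q\epsilon}(x)$, which plugged into the integral produces the $\epsilon^\alpha$ bound. For $k=1$ one must first exploit the symmetry of $\varphi$: Taylor-expanding to third order, the cubic term in $R_u$ is odd in $y$, so because $K_\epsilon(x,-y)=K_\epsilon(x,y)$ it integrates to zero; the remaining piece is bounded by $C\|u\|_{C^{3,\alpha}}|y|^{3+\alpha}$, yielding $\epsilon^{1+\alpha}$.

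For the boundary case (b), one must work with the modified second difference \eqref{extension}. I would Taylor-expand $u(x+\theta_1 y)$ and $u(x-\theta_2 y)$ to second order with $C^{2,\alpha}$ remainders $R_i$ satisfying $|R_1|\le C\|u\|_{C^{2,\alpha}}|\theta_2 y|^{2+\alpha}$ and $|R_2|\le C\|u\|_{C^{2,\alpha}}|\theta_1 y|^{2+\alpha}$. The linear-in-$\nabla u$ terms cancel by construction of \eqref{extension} and the Hessian terms reconstruct $D^2u(x):(y\otimes y)$ exactly (consistent with the remark following \eqref{extension}). The remainder is then
\begin{equation*}
\Big|\delta u(x,y) - D^2 u(x):(y\otimes y)\Big| \le C\,\frac{\theta_1^{1+\alpha}+\theta_2^{1+\alpha}}{\theta_1+\theta_2}\,\|u\|_{C^{2,\alpha}}|y|^{2+\alpha}\le C\|u\|_{C^{2,\alpha}}|y|^{2+\alpha},
\end{equation*}
using $\theta_i\le 1$, so integration against $K_\epsilon$ again gives $\epsilon^\alpha$.

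Finally, for (a) with merely $u\in C^2(\overline{\dm})$, the uniform continuity of $D^2u$ on $\overline\Omega$ yields $|R_u(x,y)|\le \omega(|y|)|y|^2$ with $\omega$ a modulus of continuity, so $|I_\epsilon u(x)-(A(x)-\frac\lambda2 I):D^2u(x)|\le C\omega(Q\epsilon)\to 0$; the modification \eqref{extension} is handled exactly as in (b) with the $C^{2,\alpha}$ estimate replaced by the modulus $\omega$. The only subtle point is verifying that the remainder arithmetic in (b) really absorbs the $\theta_i$'s; this is the main obstacle, but the identity $\theta_1 R_1 + \theta_2 R_2$ combined with the denominator $\theta_1\theta_2(\theta_1+\theta_2)$ makes the dependence on $\theta_i$ drop out cleanly, so no singularity from $\theta_i\to 0$ arises.
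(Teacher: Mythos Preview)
Your proposal is correct and follows essentially the same route as the paper: the paper writes $\delta u(x,y)$ via the integral form of the Taylor remainder (double integrals of $D^2_{yy}u$) rather than quoting Taylor's theorem directly, but the substance---subtract the exact quadratic identity, bound the remainder by H\"older continuity of $D^2u$, and exploit the even symmetry of $\delta u(x,\cdot)$ in the interior to gain the extra order when $k=1$---is identical. Two minor remarks: your labeling of $R_1,R_2$ has the $\theta$-indices swapped (harmless, since your displayed bound $\tfrac{\theta_1^{1+\alpha}+\theta_2^{1+\alpha}}{\theta_1+\theta_2}\le 1$ is the right one), and in the interior case the cubic term already cancels in $\delta u(x,y)=u(x+y)+u(x-y)-2u(x)$ by the symmetry of the second difference itself, so you do not even need to invoke the evenness of the kernel.
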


\begin{proof}
We recall that $I_\epsilon u$ is exact if $u$ is quadratic,
namely \eqref{quadratic} holds.

{\bf Case (2).} If dist $(x,\partial\Omega) \le Q\epsilon$,
  then we have 
\[
u(x+ \theta y) - u(x) = \theta \abs{y} \int_0^1 D_y u(x + s \theta y) ds,
\]
where 
$
D_y u = \abs{y}^{-1} y \cdot \gradv u.
$
Hence, 
\[
\du x y 
= 
\frac { \abs y} {\theta} \int_0^1
\Big( D_y u(x + s \theta y) - D_y u(x - s \theta y) \Big) ds.
\]
Upon adding and substracting $D_y u(x)$, we obtain
\begin{equation}\label{differenceformula}
\du x y 
= \; 
 \abs y ^2 \int_0^1 \int_{0}^1 
s\Big( D^2_{yy} u(x + s t \theta y) + D^2_{yy} 
u(x - s t \theta y) \Big) dt ds .
\end{equation}
Using the following property, shown earlier for \eqref{quadratic},  
\begin{equation}\label{M2-D2}
\begin{aligned}
M(x)^2 : D^2 u (x) 
= & \int_{\mathbb{R}^d} 
\abs y ^2 D^2_{yy} u(x)  K_{\epsilon}(x,y) dy
\\
= & \int_{\mathbb{R}^d}
2 \abs y ^2 \int_0^1 \int_{0}^1 s \; D^2_{yy} u(x) dt ds \; K_{\epsilon}(x,y) dy
\end{aligned}
\end{equation}
and the H\"{o}lder continuity of $D^2_{yy} u$ in $U_{Q\epsilon}(x)$
\begin{align*}
\Abs { D^2_{yy} u(x + s t \theta y) -  D^2_{yy} u(x) }
\leq 
 |u|_{C^{2, \alpha}(U_{Q\epsilon}(x))} \theta^{\alpha} \abs y ^{\alpha},
\end{align*}
we deduce
\begin{align*}
\Abs { \Ie u(x) - M(x)^2 : D^2 u (x) } 
\leq &\;
|u|_{C^{2, \alpha}(U_{Q\epsilon}(x))} \int_{\mathbb{R}^d} \abs y ^{2+\alpha}
K_{\epsilon}(x,y) dy
\\
\leq &\;
C |u|_{C^{2, \alpha}(U_{Q\epsilon}(x))} \theta^{\alpha} \epsilon^{\alpha} .
\end{align*}

{\bf Case (3).} If dist $(x,\partial\Omega)>Q\epsilon$, then we take
$\theta = 1$ in \eqref{extension} and rewrite it as follows
\begin{align*}
\du x y  = &\;
\abs y \int_0^1 \Big( D_y u(x+ sy) -  D_y u(x- sy) \Big) ds
\\
= &\; \abs y ^2 \int_0^1 \int_{0}^1 s\Big(  D^2_{yy} u(x + s t y) 
+ D^2_{yy} u(x - s t y) \Big) dt ds ,
\end{align*}
upon adding and subtracting $D^2u(x)$.
In view of \eqref{M2-D2} with $\theta=1$ and
\[
\Abs { D^2_{yy} u(x + s t y) + D^2_{yy} u(x - s t y) - 2 D^2_{yy} u(x) } 
\leq 
2|u|_{C^{2 + k, \alpha}(U_{Q\epsilon}(x) )} \abs y ^{k + \alpha},
\]
we deduce
\begin{align*}
\Abs{ \Ie u(x) - M(x)^2 : D^2 u (x) }
\leq &\;
|u|_{C^{2 + k, \alpha}(U_{Q\epsilon}(x) )}
\int_{\mathbb{R}^d}\abs y ^{2 + k + \alpha} K_{\epsilon}(x,y) dy
\\
\leq &\;
C |u|_{C^{2+k,\alpha}(U_{Q\epsilon}(x))} \epsilon ^{k+\alpha}. 
\end{align*}

{\bf Case (1).}
Note that if $u \in C^2(\overline{\dm})$, then 
\[
\abs{ D^2_{yy} u(x + s t y) + D^2_{yy} u(x - s t y) - 2 D^2_{yy} u(x) } \to 0 
\]
as $\abs{y} \to 0$. This implies
$\Abs { \Ie u(x) - M(x)^2 : D^2 u (x) } \to 0$
as $\epsilon\to0$ for all $x\in\Omega$,
and completes the proof.
\end{proof}

%
   \section{Finite element method for the integro-differential problem}\label{S:FEM}
%
In this section, we introduce a finite element method for
\eqref{pde_approx} and show that the method is monotone 
provided that the mesh $\Th$ is weakly acute (see
\eqref{weaklyacute}).

We start with some notation.
Let $\Th =\{K\}$ be a conforming, quasi-uniform and shape-regular
partition of $\dm$ into simplices $K$ with shape
regularity constant $\sigma$. The latter is a bound for the ratio between 
the diameter of any element $K\in\Th$ and the diameter of the largest ball
inscribed in $K$.

Let $\Fh$ be the set of faces,
or equivalently of interior $(d-1)$-dimensional simplices of
$\Th$, and $\Nh$ be the set of interior nodes of $\Th$.

Let $\Vh$ be the space of continuous piecewise affine functions relative to
$\Th$, and $\Vhzero$ be its subspace with vanishing trace
\[
\Vh := \{ v \in C(\overline{\dm}) : \; v |_{K} \text{ is affine for all $K\in\Th$} \},
\quad
\Vhzero := \{ v \in \Vh : \; v|_{\bdry} = 0  \}.
\]
Given $x_i \in \Nh$, let $\phi_i$ be its hat function 
and $\omega_i = \text{supp} \;\phi_i$ be its star.

\subsection{Finite element method}
We seek a solution $\ue_h \in \Vhzero$ satisfying
\begin{align}\label{FEM}
  - \frac{\lambda}{2} \int_{\dm} \gradv \ue_h \cdot \gradv \phi_i  + \Ie \ue_h(x_i) \int_{\dm} \phi_i  = \int_{\dm} f \, \phi_i
\end{align}
for all nodes $x_i \in \Nh$, or equivalently
\begin{equation}\label{discrete_pde}
  L_h^{\epsilon} u_h^\epsilon(x_i) = \frac{\lambda}{2} \laplace_h
  \ue_h (x_i) + \Ie \ue_h (x_i) = f_i = \frac{\int_{\dm} f \phi_i }{\int_{\dm} \phi_i},
\end{equation}
where the discrete Laplacian is defined in \eqref{lack-consistency}.
We define $\Ie$ as in Section \ref{S:approximation}, namely
\begin{align}
\label{FEMintegral}
\Ie \ue_h(x_i)  = \int_{\mathbb{R}^d} \frac{\delta \ue_h (x_i,
  y)}{\epsilon ^ {d+2} {\rm det}(M(x_i))} \, \varphi \left( \frac { M^{-1}(x_i) y} {\epsilon} \right) dy 
\end{align}
where $M(x_i) = \left( \bar{A}(x_i) - \frac{\lambda}{2} I \right
)^{1/2}$. If $A(x) \in C(\dm)$, then $\bar{A}(x_i)=A(x_i)$
is well defined at every node $x_i$. Otherwise, we let
$\bar{A}(x_i)$ be the meanvalue of $A$ over $\omega_i$:
\begin{align*}
\bar{A}(x_i) = \frac{1}{\abs{\omega_i}} \int_{\omega_i} A(x) \, dx .
\end{align*}

We emphasize that the discrete formulation \eqref{FEM}
above is not obtained by simply testing \eqref{pde_approx} with a hat
function $\phi_i$,
which would lead to a term $\int_{\dm} \Ie \ue_h(x) \phi_i(x) \, dx$
instead of $\Ie \ue_h(x_i) \int_{\dm} \phi_i \, dx$.
This quadrature (mass lumping) preserves monotonicity, which
plays a crucial role in establishing the ABP estimate and the a priori
error estimates, and is much easier to implement since we only need to
evaluate $\Ie \ue_h(x_i)$ at every node $x_i$.
We deal with monotonicity in subsection \eqref{SS:dmp} and with the
computation of $\Ie \ue_h(x_i)$ in subsection \ref{subsec:quadrature}.

\subsection{Quadrature}\label{subsec:quadrature}
We briefly discuss the effect of quadrature in computing $\Ie u_h^\epsilon(x_i)$,
which renders our method fully practical. The change of
variables $z = M^{-1}(x)y/\epsilon$ yields 
\[
I_{\epsilon} u^{\epsilon}_h (x) = \int_{B_1(0)}
\frac{ \delta u_h^\epsilon (x_i, \epsilon M(x_i)z)} {\epsilon^2} \,
\varphi(z) \, dz ,
\]
where $B_1(0)$ is the unit ball in $\mathbb{R}^d$.
We thus define the quadrature formula
\[
\Qe u_h^\epsilon(x_i) := \sum_{j=1}^J
\frac{\delta u_h^\epsilon (x_i,\epsilon M(x_i) z_j)} {\epsilon^2} \varphi(z_j) w_j 
\qquad\textrm{for all } x_i\in\Nh,
\]
where the node-weight pairs $(z_j,w_j)_{j=1}^J$
satisfy the following properties \cite{Stroud71}
%
\squishlist
\item {\it local support}:   $z_j \in B_1(0)$ for all quadrature points $z_j$;
  \item {\it consistency}: $\Qe p(x_i) = \Ie p(x_i)$
    for all quadratic polynomials $p$ and $x_i\in\Nh$;
  \item {\it positivity}: $w_j > 0$ for all quadrature weights $w_j$.
\squishend
Finally, it is easy to check that operator consistency holds provided that
\[
\sum_{j=1}^J z_j \otimes z_j \varphi(z_j) \, w_j = I. 
\]

\subsection{Mesh weak acuteness and discrete maximum principle}\label{SS:dmp}
We start by recalling the definition \eqref{lack-consistency} of 
discrete Laplace operator at each node $x_i \in \Nh$, and rewrite it
upon integrating by parts elementwise
\begin{align*}
\laplace_h \ue_h (x_i) = &\; \left ( 
\int_{\dm} \phi_i \right )^{-1}
\sum_{F \;\ni x_i } \int_F J_F (\ue_h)  \phi_i ,
\end{align*}
where
\[
  J_F (\ue_h) := - n_F^+ \cdot \gradv \ue_h|_{K^+} - n_F^- \cdot \gradv \ue_h|_{K^-}
\]
denotes the jump of $\nabla\ue_h$ across the face $F\in\Fh$, $K^\pm\in\Th$ denote
the two elements sharing the face $F$ and $n_F^\pm$ the outer unit
normal vectors of $K^\pm$ on $F$.
We point out that $J_F (\ue_h)$ is the opposite of the usual jump
because it corresponds to $\laplace_h \ue_h$ rather than $-\laplace_h \ue_h$.
Since $J_F (\ue_h)$ is constant and 
\[
\int_{\dm} \phi_i = \frac{\abs{ \omega_i }}{d+1} 
\quad \text{ and } \quad 
\int_{F} \phi_i  = \frac{\abs{ F }}{d} ,
\]
provided $x_i\in F$, we get the following expression for the discrete Laplacian
\begin{align}\label{discretelaplace}
  \laplace_h \ue_h(x_i) = \frac{d+1}{d} \sum_{F\in\Fh: \, x_i\in F}
  \frac{\abs F}{\abs {\omega_i}} J_F(\ue_h).
\end{align}

We now impose restrictions on the geometry of meshes.
We say that the mesh $\Th$ is {\it weakly acute with respect to
faces}, or {\it face weakly acute} for short, if
\begin{align}\label{localweaklyacute}
\int_{\omega_F} \gradv \phi_i \cdot \gradv \phi_j \leq 0 
\quad \text{ for all $i \neq j$ and all faces $F$}
\end{align}
where $\omega_F := \cup \{ K^{\pm}\in\Th: \, F \subset K^{\pm} \}$.
We say that $\Th$ is {\it weakly acute} \cite{Ciarlet91} if
\begin{align}\label{weaklyacute}
k_{ij} := \int_{\dm} \gradv \phi_i \cdot \gradv \phi_j \leq 0 
\quad \text{ for all $i \neq j$.}
\end{align}
This condition is equivalent to \eqref{localweaklyacute} for
$d=2$ and is valid if and only if
the sum of the two angles opposite to a face (or edge) is no greater than
$\pi$ \cite{CiarletRaviart73, NochettoPaoliniVerdi91}.
On the other hand, \eqref{weaklyacute} is weaker than
\eqref{localweaklyacute} for $d>2$ because the former is obtained upon
adding the latter over all faces $F$ containing the segment that connects
nodes $x_i$ and $x_j$.
For $d=3$, the property that internal dihedral angles of tetrahedra
does not exceed $\pi/2$ implies \eqref{localweaklyacute}; we refer to
\cite{Bartels05, KorotovKrizekNeittaanmaki00}.

It is well known that monotonicity of piecewise linear
finite element methods for the Laplace equation hinges on \eqref{weaklyacute}.
We are now ready to discuss monotonicity of the
discrete operator $L_h^\epsilon$ in \eqref{discrete_pde}.

\begin{lemma}[monotonicity property of $L_h^{\epsilon}$]\label{Monotonicity}
Let $v_h$ and $w_h$ be two functions in $\Vh$, and $v_h \leq w_h$ in $\dm$ with equality attained at some node $x_i \in \Nh$. 
Then the integral operator $\Ie$ satisfies the monotonicity property
\[ 
\Ie v_h (x_i) \leq \Ie w_h (x_i).
\]
In addition, if the mesh $\Th$ satisfies \eqref{weaklyacute},
then the discrete Laplacian $\laplace_h$ satisfies the monotonicity property
\[
\laplace_h v_h(x_i)
\leq 
\laplace_h w_h(x_i) ,
\]
whence
\[
L_h^{\epsilon} v_h (x_i) \leq L_h^{\epsilon} w_h (x_i).
\]
\end{lemma}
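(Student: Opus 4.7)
The plan is to handle the two pieces of $L_h^\epsilon$ separately and then add, since monotonicity is preserved under sums.

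For the integral part $I_\epsilon$, the essential point is that the kernel $K_\epsilon(x_i,y)$ is nonnegative (as $\varphi\geq 0$ is the standard assumption on the bump), so it suffices to show the pointwise inequality $\delta v_h(x_i,y)\leq \delta w_h(x_i,y)$ for a.e.\ $y$ in the support of $K_\epsilon(x_i,\cdot)$ and then integrate. For interior $y$ with both $x_i\pm y\in\Omega$ this is immediate because $\delta v_h(x_i,y)=v_h(x_i+y)+v_h(x_i-y)-2v_h(x_i)$, and using $v_h(x_i)=w_h(x_i)$ together with $v_h\leq w_h$ everywhere gives the inequality. For $y$ near the boundary, I use the modified formula \eqref{extension}: since $\theta_1,\theta_2>0$, the coefficients in front of $v_h(x-\theta_2 y)$ and $v_h(x+\theta_1 y)$ are nonnegative, and the coefficient in front of $v_h(x_i)$ (which equals $w_h(x_i)$) cancels. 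Thus the same comparison gives $\delta v_h(x_i,y)\leq\delta w_h(x_i,y)$, and positivity of $K_\epsilon$ finishes the argument.

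For the discrete Laplacian, I would not work with the jump representation \eqref{discretelaplace} but rather the bilinear-form representation
\[
\Delta_h v_h(x_i) = -\Big(\int_\Omega \phi_i\Big)^{-1}\sum_{x_j\in\Nh} k_{ij}\, v_h(x_j),\qquad k_{ij}=(\nabla\phi_i,\nabla\phi_j)_\Omega.
\]
Using $\sum_j k_{ij}=0$ (constants lie in the kernel of the Laplacian on the interior stencil, taking boundary nodes into account with Dirichlet data), I rewrite
\[
\Delta_h v_h(x_i) = -\Big(\int_\Omega \phi_i\Big)^{-1}\sum_{j\neq i} k_{ij}\,(v_h(x_j)-v_h(x_i)).
\]
Weak acuteness \eqref{weaklyacute} gives $-k_{ij}\geq 0$ for $j\neq i$, and since $v_h(x_i)=w_h(x_i)$ while $v_h(x_j)\leq w_h(x_j)$, each term in the sum for $w_h$ dominates the corresponding term for $v_h$, yielding $\Delta_h v_h(x_i)\leq \Delta_h w_h(x_i)$.

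Adding the two inequalities multiplied by the positive constants $\lambda/2$ and $1$, respectively, gives the conclusion for $L_h^\epsilon$. I do not anticipate a real obstacle here; the only delicate bookkeeping is verifying that the boundary-modified difference \eqref{extension} has the correct sign pattern in its coefficients, but this is immediate from $\theta_1,\theta_2\in(0,1]$. The whole proof is essentially a discrete maximum-principle calculation, cleanly split into the integral and local pieces.
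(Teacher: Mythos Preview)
Your proposal is correct and follows essentially the same route as the paper: verify $\delta v_h(x_i,y)\le\delta w_h(x_i,y)$ pointwise and integrate against the nonnegative kernel for $I_\epsilon$, and use the stiffness-matrix representation together with $k_{ij}\le0$ for $j\ne i$ for $\Delta_h$. Your treatment is in fact slightly more complete than the paper's, since you explicitly check the sign pattern in the boundary-modified difference \eqref{extension}, which the paper asserts without comment.
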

\begin{proof}
To show the monotonicity property of $\Ie$, we note that the assumptions $v_h \leq w_h$ and $v_h(x_i) = w_h(x_i)$ imply
\[
\delta v_h (x_i, y) \leq \delta w_h (x_i, y) .
\]
The first assertion follows from the definition \eqref{integral}
of $\Ie$  and the fact $K_\epsilon(x,y)\ge0$.

On the other hand, following \cite{Ciarlet91}, we realize that 
\begin{equation}\label{ciarlet}
  - \int_{\dm} \gradv (w_h - v_h) \cdot \gradv \phi_i  = 
- \sum_{j} \big(w_h(x_j) - v_h(x_j) \big) k_{ij} \geq 0,
\end{equation}
because $k_{ij}\le0$ for $i\ne j$.
Invoking the definition \eqref{lack-consistency} of $\laplace_h$ yields
\[
\laplace_h(w_h - v_h)(x_i) \geq 0.
\]
This proves the second inequality. 
Finally, the last assertion follows from \eqref{discrete_pde}.
\end{proof}

It is worth stressing that the monotonicity property of $L_h^\epsilon$
relies solely on \eqref{weaklyacute} and is thus valid for all
matrices $A(x)$ regardless of possible anisotropies.
We mention two important consequences of the
monotonicity property: the discrete maximum principle
and the unique solvability of \eqref{discrete_pde}.
The proof of the former, as well as that of Lemma
\ref{Monotonicity}, extends to the quadrature
described in subsection \ref{subsec:quadrature} and requires no a
priori relation between the two scales $\epsilon$ and $h$.
\begin{corollary}[discrete maximum principle]\label{dmp}
Let $\Th$ satisfy \eqref{weaklyacute}. If 
  $L_h^\epsilon w_h (x_i) \geq 0 $ for all $x_i \in \Nh$,
  and $w_h \le 0$ on the boundary $\bdry$,
then $w_h \leq 0$ in $\dm$.
\end{corollary}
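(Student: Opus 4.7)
The plan is to argue by contradiction, leveraging the monotonicity of $L_h^{\epsilon}$ established in Lemma \ref{Monotonicity} together with the fact that both $\laplace_h$ and $\Ie$ annihilate constants. Suppose, contrary to the claim, that $M := \max_{\overline{\dm}} \ue_h > 0$. Since $\ue_h \in \Vh$ is continuous and piecewise affine, this maximum is attained at some node, and the boundary hypothesis $\ue_h \leq 0$ on $\bdry$ forces that node to be interior, so $M = \ue_h(x_i)$ for some $x_i \in \Nh$.

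The first step is to compare $\ue_h$ with the constant function $w_h \equiv M \in \Vh$. Since $\ue_h \leq w_h$ in $\dm$ with equality at $x_i$, Lemma \ref{Monotonicity} gives $L_h^{\epsilon} \ue_h(x_i) \leq L_h^{\epsilon} w_h(x_i)$. A direct inspection shows $\laplace_h M = 0$ from \eqref{lack-consistency}, and both the interior form and the boundary-modified second difference \eqref{extension} yield $\delta M(x_i, y) = 0$ pointwise in $y$, whence $\Ie M(x_i) = 0$. Combined with the hypothesis $L_h^{\epsilon} \ue_h(x_i) \geq 0$, we obtain $L_h^{\epsilon} \ue_h(x_i) = 0$. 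Since the two monotonicity statements in Lemma \ref{Monotonicity} also give $\laplace_h \ue_h(x_i) \leq 0$ and $\Ie \ue_h(x_i) \leq 0$ separately, the vanishing of their sum forces each term to vanish individually.

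The second step propagates the maximum. From $\Ie \ue_h(x_i) = 0$, the integrand $\delta \ue_h(x_i, y)\,K_\epsilon(x_i, y)$ is everywhere nonpositive (because $\ue_h \leq M = \ue_h(x_i)$ renders the second difference nonpositive) while integrating to zero, so $\delta \ue_h(x_i, y) = 0$ a.e.\ on the support of $K_\epsilon(x_i, \cdot)$, namely on an ellipsoidal neighborhood of size $O(\epsilon)$ about the origin. This in turn forces $\ue_h(x_i + y) = \ue_h(x_i - y) = M$ on an open Euclidean ball around $x_i$. Setting $S := \{x \in \overline{\dm} : \ue_h(x) = M\}$, continuity makes $S$ closed, nonemptiness is clear, and the argument above shows every point of $S \cap \dm$ lies in the interior of $S$. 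Connectedness of $\dm$ therefore yields $\overline{\dm} \subset S$, contradicting $\ue_h \leq 0 < M$ on $\bdry$.

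The main obstacle is guaranteeing that the propagation step actually covers all of $\dm$: this relies on the effective positivity of the kernel $K_\epsilon(x_i, \cdot)$ on its support, inherited from the positivity properties of $\varphi$. If one prefers to avoid any assumption beyond nonnegativity of $\varphi$, the same conclusion can be extracted from the Laplacian side by writing $\laplace_h \ue_h(x_i) = \big(\int_\dm \phi_i\big)^{-1} \sum_{j \neq i} (-k_{ij})\big(\ue_h(x_j) - M\big)$ via the partition-of-unity identity $\sum_j k_{ij} = 0$; weak acuteness $-k_{ij} \geq 0$ makes every summand nonnegative, so $\ue_h(x_j) = M$ at every mesh-neighbor $x_j$ with $k_{ij} < 0$, and iterating along the connected mesh-graph eventually reaches a boundary node, yielding the same contradiction.
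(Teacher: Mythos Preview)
Your argument is correct and rests on the same mechanism as the paper's: at an interior positive maximum $x_i$ one has $\laplace_h \ue_h(x_i)\le 0$ and $\Ie \ue_h(x_i)\le 0$, which is incompatible with $L_h^\epsilon \ue_h(x_i)\ge 0$. The paper short-circuits the equality case by simply declaring the maximum \emph{strict}, so that $\delta\ue_h(x_i,y)<0$ on an open set and hence $\Ie\ue_h(x_i)<0$ outright; you instead compare with the constant $M$ via Lemma~\ref{Monotonicity} and then propagate the equality, which is the more careful of the two treatments. One remark on your fallback route through $\laplace_h$: weak acuteness only guarantees $k_{ij}\le 0$, so the graph on edges $\{k_{ij}<0\}$ need not connect every interior node to $\bdry$ and the iteration could in principle stall; your $\Ie$-based propagation (tacitly using that $\varphi>0$ on an open set, already implicit in the paper's use of $K_\epsilon\ge 0$) is the safer choice, and in fact once the propagation reaches a node within $Q\epsilon$ of $\bdry$ the boundary-modified difference \eqref{extension} forces $\ue_h=M$ at a point of $\bdry$ directly.
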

\begin{proof}
Given $\gamma>0$ arbitrary, we argue with the auxiliary function
$v_h := w_h + \gamma I_h\psi$, where $\psi(x) = |x|^2 - \alpha$ and
$\alpha=\alpha(\dm)>0$ is so large that $\psi\le0$ on $\partial\dm$.
Upon subtracting a linear function tangent to $\psi(x)$ at $x_i \in\Nh$,
whose discrete Laplacian
vanishes, we can assume that $\psi$ attains a minimum at $x_i$, namely
$\psi(x) = |x-x_i|^2 - \alpha$. Employing \eqref{ciarlet} to
compare $I_h\psi$ with the constant function $-\alpha$, we deduce
\[
\Delta I_h\psi(x_i) \int_\dm \phi_i = 
- \int_{\dm} \gradv I_h\psi \cdot \gradv \phi_i =
- \sum_{j\ne i} |x_j - x_j|^2 k_{ij} > 0,
\]
because $\sum_{j\ne i} k_{ij} = - k_{ii} < 0$. In addition, realizing
that $\delta I_h\psi(x_i,y)>0$ for all $y=\epsilon M(x_i) z$ with $z$
in the unit ball, we obtain $I_\epsilon[I_h\psi](x_i) > 0$, whence
$L_h^\epsilon v_h(x_i) > 0$.

Let $x_i$ be a node where $v_h$ attains an absolute positive maximum. Such a
node $x_i\in\Nh$ must be interior because $v_h\le0$ on $\partial\dm$.
Applying Lemma \ref{Monotonicity} to compare $v_h$ with the constant
function $w_h=v_h(x_i)$ we infer that $L_h^\epsilon v_h (x_i) \le 0$,
which contradicts the preceding statement. This implies $v_h \le 0$ in
$\dm$, or equivalently
\[
w_h \le -\gamma I_h\psi \le \gamma\alpha
\quad\textrm{in } \dm.
\]
Taking the limit as $\gamma\to0$ yields the asserted inequality.
\end{proof}

\begin{corollary}[uniqueness]\label{uniquesolvence}
If the mesh $\Th$ satisfies \eqref{weaklyacute},
then the linear system \eqref{discrete_pde} has a unique solution. 
\end{corollary}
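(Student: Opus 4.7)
The plan is to reduce uniqueness to the discrete maximum principle (Corollary \ref{dmp}) and then invoke the fact that for a square linear system, uniqueness is equivalent to existence. The unknowns $U_j=\ue_h(x_j)$ for $x_j\in\Nh$ and the equations \eqref{discrete_pde} indexed by $x_i\in\Nh$ form a square system of size $\#\Nh\times\#\Nh$, so it suffices to show that the kernel is trivial.

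To prove triviality of the kernel, I would take $\ue_h\in\Vhzero$ with $L_h^{\epsilon}\ue_h(x_i)=0$ for every $x_i\in\Nh$. Since the discrete Laplacian $\laplace_h$ and the integral operator $\Ie$ (with the quadrature at $x=x_i$ as in \eqref{FEMintegral}) both act linearly on $\ue_h$, the operator $L_h^\epsilon$ is linear. Hence $L_h^{\epsilon}(\pm\ue_h)(x_i)=0\ge 0$ for all $x_i\in\Nh$, and $\pm\ue_h=0$ on $\bdry$. Applying Corollary \ref{dmp} to $\ue_h$ gives $\ue_h\le 0$ in $\dm$, and applying it to $-\ue_h$ gives $\ue_h\ge 0$ in $\dm$. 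Thus $\ue_h\equiv 0$, so the kernel of the system matrix is trivial. Because the system is square, it is therefore invertible, which yields both existence and uniqueness of the solution to \eqref{discrete_pde} for any right-hand side.

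There is no serious obstacle: the entire argument is a direct corollary of the discrete maximum principle, relying only on linearity of $L_h^\epsilon$ and the squareness of the finite-dimensional system. The only small thing worth verifying explicitly is linearity of $\Ie$ at the nodal values, which is transparent from \eqref{FEMintegral} since the kernel $K_\epsilon(x_i,y)$ is independent of $\ue_h$ and $\delta\ue_h(x_i,y)$ depends linearly on the nodal values of $\ue_h$ (including through the boundary modification \eqref{extension}).
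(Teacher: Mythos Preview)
Your proof is correct and follows essentially the same approach as the paper: reduce to triviality of the kernel via the squareness of the linear system, then apply Corollary~\ref{dmp} to both $\ue_h$ and $-\ue_h$ to conclude $\ue_h\equiv 0$. The paper's proof is simply a one-line version of yours.
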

\begin{proof}
  Since the equation \eqref{discrete_pde} is a square linear system, we only need to show that $\ue_h = 0$ if $f = 0$ in $\dm$. 
This statement is a direct consequence of Corollary \ref{dmp}
(discrete maximum principle).
\end{proof}

A third important consequence of the monotonicity property is the
discrete ABP estimate, which relies on \eqref{localweaklyacute}
rather than \eqref{weaklyacute} and is discussed in Section \ref{S:discrete-ABP}.
We first review its continuous counterpart in Section \ref{S:ABP}.

%
 \section{The Alexandroff-Bakelman-Pucci estimate}\label{S:ABP}
%
We start with the definition of convex envelope and sub-differential of continuous functions which is frequently used in the analysis of fully nonlinear elliptic PDEs.

\subsection{Convex envelope and sub-differential}
Let the domain $\dm$ be compactly contained in a ball $B_R$ of radius $R$ and 
$v \in C (\overline{\dm})$ with $v \geq 0$ on $\bdry$. 
Since the negative part $v^-$ of $v$ vanishes on $\bdry$, we extend
$v^-$ continuously by zero to $B_{R} \setminus \dm$.
We define, with some abuse of notation, the convex envelope of $v$ in $B_{R}$ by
\begin{align}\label{convexenvelop}
\Gamma (v) (x) := \sup_{L} \{ L(x) : L \leq -v^- \text{ in $B_{R}$, $L$
  is affine} \}\quad\forall \; x\in B_R.
\end{align}
Obviously, $\Gamma (v)$ is a convex function and
$\Gamma (v) \leq -v^- \le v$ in $\dm$. Moreover, $\Gamma (v) = 0$ on
$\partial B_R$ because dist $(\partial\Omega,\partial B_R)>0$.
In fact, for every $x\in\partial B_R$ there exists an affine function $L$ such that 
$L(z) \leq -v^- (z)$ for all  $z\in\dm$ and $L(x) = 0$, whence $\Gamma (v)(x)=0$.
The set
\begin{align}
  \label{contactset}
  \mathcal{C}^-(v) := \{ x \in B_R  : \; \Gamma (v)(x) = v(x) \} 
\end{align}
is called {\it (lower) contact set} of $v$. 
We may assume that $\mathcal{C}^-(v) \subset \dm$ unless $\Gamma (v)=0$.
In fact, if $\Gamma (v)(x) = -v^-(x)= 0$ for
some $x \in B_{R}\setminus \dm$, then the convexity of 
$\Gamma(v)$ and $\Gamma (v) = 0$ on $\partial B_R$ implies
$\Gamma (v) = 0$ in $\dm$.

Since $\Gamma (v)$ is convex its {\it subdifferential} $\gradv\Gamma (v) (x_0)$
is nonempty for all $x_0\in B_R$
\begin{equation}\label{sub-diff}
\gradv \Gamma (v) (x_0) := \{  w \in \mathbb{R}^d: \;  \innerpro w {
  x -  x_0 } + \Gamma (v)(  x_0) \leq \Gamma (v)(x) \text{ for all $ x
  \in B_{R}$} \} ,
\end{equation}
where $\innerpro \cdot \cdot$ denotes the dot product in $\mathbb R^d$.
In particular, if $x_0\in\mathcal{C}^-(v)$, then
\[
\innerpro w {x -  x_0 } + v(  x_0) \leq v(x) \quad\forall \;
w \in \gradv \Gamma (v) (x_0), \; x\in B_R.
\]

%
\subsection{Alexandroff-Bakelman-Pucci estimate and applications}
%

The classical ABP estimate is the cornerstone in the regularity theory of fully nonlinear elliptic equations. 
The estimate gives a bound for the $L^{\infty}(\dm)$-norm of 
the negative part $u^-$ of the solution $u$ to equation 
\eqref{pde} in terms of the $L^d$-norm of $f$:
\begin{align*}
  \sup_{\dm} u^- \leq C \left ( \int_{\mathcal{C}^-(u)} |f|^d \right)^{1/d} ,
\end{align*}
where $\mathcal{C}^-(u)$ is the lower contact set of $u$ in $B_{R}$ 
defined in \eqref{contactset} and $C = C(d, \lambda, \Omega)$.
We complement the ABP estimate with a modified version at the $\epsilon$-scale \cite{CaffarelliSilvestre10}.

\begin{lemma}[ABP estimate at $\epsilon$-scale \cite{CaffarelliSilvestre10}]
    \label{coarse_ABP}
If $\ue$ is a solution of \eqref{pde_approx} with $\ue \geq 0$ on the boundary $\bdry$. Then 
\[
  \sup_{\dm} (\ue)^- \leq C \left ( \int_{\mathcal{C}^-(\ue)} |f|^d \right)^{1/d} ,
\]
where $\mathcal{C}^-(\ue)$ is defined in \eqref{contactset}
and $C = C(d, \lambda, \dm)$.
\end{lemma}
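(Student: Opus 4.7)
The plan is to transplant the classical ABP proof to the integro-differential setting, using the three standard ingredients: a geometric lower bound for the subdifferential image of the convex envelope, Alexandroff's area formula, and a PDE-driven upper bound for $\det D^2\Gamma(u^\epsilon)$ on the contact set. I let $\Gamma := \Gamma(u^\epsilon)$ and argue in the enclosing ball $B_R$.

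First I would establish the geometric lower bound. Since $\Gamma$ is convex, $\Gamma = 0$ on $\partial B_R$, and $\min_{B_R}\Gamma = -\sup_\Omega(u^\epsilon)^-$, a cone/sliding supporting plane argument gives $B_\rho(0) \subset \partial\Gamma(B_R)$ with $\rho = c\sup_\Omega(u^\epsilon)^-/R$, so that
\[
\bigl(\sup_\Omega(u^\epsilon)^-\bigr)^d \leq C(d,R)\,\bigl|\partial\Gamma(B_R)\bigr|.
\]
Since $\Gamma$ is convex, Alexandroff's theorem provides a pointwise second-order Taylor expansion at almost every $x\in B_R$, and the standard area formula, together with the fact that $\partial\Gamma$ is supported on the contact set, yields
\[
|\partial\Gamma(B_R)| \leq \int_{\mathcal{C}^-(u^\epsilon)} \det D^2\Gamma(x)\,dx.
\]

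The heart of the proof is the pointwise bound on $\det D^2\Gamma$ at a.e.\ contact point $x_0$. The key observation is that at such $x_0$ we have $u^\epsilon(x_0)=\Gamma(x_0)$ and $u^\epsilon\geq\Gamma$ on $B_R$, so $\delta u^\epsilon(x_0,y)\geq \delta\Gamma(x_0,y)\geq 0$ for every admissible $y$. Integrating against the non-negative kernel $K_\epsilon(x_0,\cdot)$ gives $\Ie u^\epsilon(x_0)\geq \Ie\Gamma(x_0)$, and since $\Gamma$ is pointwise twice differentiable at $x_0$ the argument from Lemma \ref{approximation} localised around $x_0$ identifies
\[
\Ie\Gamma(x_0) = \Bigl(A(x_0)-\tfrac{\lambda}{2}I\Bigr):D^2\Gamma(x_0)
\]
(no remainder because $\Gamma$ is essentially quadratic at $x_0$ and $\delta\Gamma$ is controlled by the Hessian). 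Similarly, $\Delta u^\epsilon(x_0)\geq \Delta\Gamma(x_0)$ in the viscosity sense. Substituting these two inequalities into $L_\epsilon u^\epsilon(x_0)=f(x_0)$ gives the pointwise inequality
\[
A(x_0):D^2\Gamma(x_0)\leq f(x_0).
\]
Because $D^2\Gamma(x_0)\geq 0$ and $A(x_0)\geq\lambda I$, the arithmetic-geometric mean inequality applied to the eigenvalues of $A(x_0)^{1/2}D^2\Gamma(x_0) A(x_0)^{1/2}$ gives
\[
\det D^2\Gamma(x_0)\leq \frac{1}{\lambda^d\,d^d}\bigl(A(x_0):D^2\Gamma(x_0)\bigr)^d \leq C(d,\lambda)\,|f(x_0)|^d.
\]
Chaining the three steps yields the claimed bound with $C=C(d,\lambda,\Omega)$.

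The main obstacle is the rigorous passage from $\Ie u^\epsilon(x_0)\geq \Ie\Gamma(x_0)$ to the pointwise inequality $A(x_0):D^2\Gamma(x_0)\leq f(x_0)$ without assuming $u^\epsilon\in C^2$. Two devices make this work: (i) a convex test-function formulation of the viscosity solution, which justifies evaluating $L_\epsilon$ on $\Gamma$ from below at almost every contact point where Alexandroff's Hessian exists; and (ii) a standard mollification $\Gamma_\delta = \Gamma\ast\rho_\delta$ of the convex envelope, for which $\Ie\Gamma_\delta$ is classical, combined with the monotone convergence $\Ie\Gamma_\delta\to\Ie\Gamma$ as $\delta\to 0$ guaranteed by the boundedness of $K_\epsilon$. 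This is precisely the technical step carried out in \cite{CaffarelliSilvestre10}; once it is in place, the three-step ABP scheme above closes the proof.
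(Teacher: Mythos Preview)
The paper does not supply its own proof of this lemma; it is quoted directly from \cite{CaffarelliSilvestre10}. So there is nothing in the paper to compare against, but your argument as written has a real gap that is worth naming.

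The problematic step is the claim that at an Alexandroff point $x_0$ of the convex envelope one has
\[
\Ie\Gamma(x_0)=\Bigl(A(x_0)-\tfrac{\lambda}{2}I\Bigr):D^2\Gamma(x_0)
\]
``with no remainder.'' This identity holds only for quadratic polynomials (equation \eqref{quadratics}); for any other function Lemma~\ref{approximation} produces a remainder that depends on $\epsilon$ and on the regularity of the function. Alexandroff differentiability gives $\delta\Gamma(x_0,y)=y^{\top}D^2\Gamma(x_0)\,y+o(|y|^2)$, but the kernel $K_\epsilon(x_0,\cdot)$ samples $\Gamma$ at distances up to $Q\epsilon$, where the $o(|y|^2)$ term is \emph{not} negligible for fixed $\epsilon$. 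Mollifying $\Gamma$ does not help: even for smooth convex $\Gamma_\delta$ one still has $\Ie\Gamma_\delta(x_0)\ne (A(x_0)-\tfrac{\lambda}{2}I):D^2\Gamma_\delta(x_0)$ at fixed $\epsilon$. So the chain that leads you to $A(x_0):D^2\Gamma(x_0)\le f(x_0)$ breaks here, and your proposed devices (i)--(ii) address a different issue (regularity of $u^\epsilon$) rather than this one.

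The correct route is simpler and is exactly what the paper does in the discrete setting (see the end of the proof of Lemma~\ref{discrete_ABP}): at a contact point $x_0$ you already have $\delta u^\epsilon(x_0,y)\ge\delta\Gamma(x_0,y)\ge0$, hence $\Ie u^\epsilon(x_0)\ge0$. Feed this directly into the equation to obtain
\[
\tfrac{\lambda}{2}\,\Delta u^\epsilon(x_0)\le f(x_0),
\]
and then, using $D^2\Gamma(x_0)\le D^2 u^\epsilon(x_0)$ together with the AM--GM inequality on the \emph{trace} (not on $A:D^2\Gamma$),
\[
\det D^2\Gamma(x_0)\le\Bigl(\tfrac{1}{d}\,\mathrm{tr}\,D^2 u^\epsilon(x_0)\Bigr)^d\le\Bigl(\tfrac{2}{\lambda d}\,f(x_0)\Bigr)^d.
\]
In other words, discard the integral term altogether at contact points rather than trying to convert it into $(A-\tfrac{\lambda}{2}I):D^2\Gamma$; the Laplacian alone already closes the ABP argument with $C=C(d,\lambda,\Omega)$.
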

We now apply Lemma \ref{coarse_ABP} to establish a rate of convergence for $\inftynorm{u - \ue}$.
\begin{lemma}
  [rate of convergence of $\inftynorm{u - \ue}$]
  \label{rate_of_ue}
 If the solution $u$ of \eqref{pde} satisfies $u \in C^{2,
    \alpha}(\overline{\dm})$ for some $0 < \alpha \leq 1$ and $\ue$ is a solution of \eqref{pde_approx}, then
there exists $C=C(d,\lambda,\Omega)$ such that
  \[
    \inftynorm{u - \ue} \leq C \epsilon^{\alpha} 
  |u|_{C^{2,\alpha}(\overline{\dm})}.
  \]
\end{lemma}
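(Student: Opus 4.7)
The plan is to set $w:=u-u^\epsilon$ and estimate it through the ABP-type bound at the $\epsilon$-scale (Lemma \ref{coarse_ABP}). Note that $w=0$ on $\bdry$ since both $u$ and $u^\epsilon$ vanish there, and by linearity
\[
\Le w \; = \; \Le u - \Le u^\epsilon \; = \; \Le u - f.
\]
Using $A(x):D^2 u(x)=f(x)$ pointwise (which is legitimate since $u\in C^{2,\alpha}(\overline\dm)$ is a classical solution), I would rewrite
\[
\Le u - f \; = \; \Big(\tfrac{\lambda}{2}\laplace u + \Ie u\Big) - A:D^2 u
\; = \; \Ie u - \Big(A(x)-\tfrac{\lambda}{2}I\Big):D^2 u(x),
\]
which is precisely the quantity controlled by Lemma \ref{approximation}.

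Next I would bound $\|\Le w\|_{L^\infty(\dm)}$ uniformly in $\dm$ by splitting into two regions. On $\dme$ I would invoke Lemma \ref{approximation}(c) with $k=0$, and on the boundary layer $\dm\setminus\dme$ I would invoke Lemma \ref{approximation}(b); both give exactly the same rate $\epsilon^\alpha$ with a constant multiplied by the $C^{2,\alpha}$-norm on $U_{Q\epsilon}(x)\cap\overline\dm$, which is dominated by $\|u\|_{C^{2,\alpha}(\overline\dm)}$. Thus
\[
\Big\|\Le u - f\Big\|_{L^\infty(\dm)} \;\le\; C\,\epsilon^{\alpha}\,\|u\|_{C^{2,\alpha}(\overline\dm)}.
\]

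Having a uniform pointwise bound on $\Le w$ together with $w=0$ on $\bdry$, I would apply Lemma \ref{coarse_ABP} (ABP at the $\epsilon$-scale) to $w$, yielding
\[
\sup_{\dm} w^{-} \;\le\; C\Big(\int_{\mathcal C^-(w)} |\Le w|^d\, dx\Big)^{1/d}
\;\le\; C\,|\dm|^{1/d}\,\epsilon^{\alpha}\,\|u\|_{C^{2,\alpha}(\overline\dm)}.
\]
To control $w^{+}$, I would simply repeat the argument with $-w$ in place of $w$: $\Le(-w)=-\Le w$, $-w$ still vanishes on $\bdry$, so the same ABP bound gives $\sup_{\dm}(-w)^-=\sup_{\dm}w^+$ with the identical right-hand side. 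Adding the two bounds proves $\inftynorm{u-u^\epsilon}\le C\epsilon^{\alpha}\|u\|_{C^{2,\alpha}(\overline\dm)}$, absorbing $|\dm|^{1/d}$ into $C=C(d,\lambda,\dm)$.

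I do not foresee a real obstacle here; the whole proof is a direct marriage of two ingredients already at hand. The only point requiring mild care is the application of the boundary-layer estimate Lemma \ref{approximation}(b), where the modified second-difference \eqref{extension} is used for $x$ within $Q\epsilon$ of $\bdry$. That is precisely what ensures the $\epsilon^\alpha$ bound survives uniformly up to the boundary rather than degrading there, so that a single $L^\infty$ bound on $\Le w$ feeds directly into ABP.
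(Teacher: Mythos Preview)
Your proposal is correct and follows essentially the same route as the paper: bound $\Le(u-u^\epsilon)=\Ie u-(A-\tfrac{\lambda}{2}I):D^2u$ pointwise via Lemma~\ref{approximation}, then feed this into the $\epsilon$-scale ABP estimate (Lemma~\ref{coarse_ABP}) to control $(u-u^\epsilon)^-$, with the bound on the positive part obtained symmetrically. Your version is slightly more explicit about the interior/boundary-layer split in invoking parts (b) and (c) of Lemma~\ref{approximation}, but the argument is the same.
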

\begin{proof}
  We only need to establish a bound for the negative part of $u - \ue$ such as
  \begin{align}\label{eq:lower-bound}
\sup_{\dm} (u - \ue)^- \leq C \epsilon^{\alpha},
\end{align}
because the bound for the positive part is similar.
By Lemma \ref{approximation} (approximation property) of $\Ie$,  we have
\[
\Abs{\Le u(x) - A(x) : D^2 u (x)} \leq C \epsilon^{\alpha} 
|u|_{C^{2,\alpha}(\overline{\dm})}
\qquad
\text{ for all $x \in \dm$.}
\]
Thanks to \eqref{pde} and \eqref{pde_approx}, a simple comparison between $\Le u$ with $\Le \ue$ yields 
\[
  \Abs{\Le (u- \ue)(x)} \leq C \epsilon^{\alpha} 
  |u|_{C^{2,\alpha}(\overline{\dm})}.
\]
Invoking Lemma \ref{coarse_ABP} (ABP estimate at $\epsilon$-scale), we 
readily obtain \eqref{eq:lower-bound}.
\end{proof}

%
 \section{Discrete Alexandroff-Bakelman-Pucci estimate}\label{S:discrete-ABP}
%
The aim of this section is to establish Theorem \ref{discrete_ABP}
(discrete ABP estimate).
This and related results are of intrinsic interest and do not require
regularity of the domain $\dm$, which is just assumed to be bounded in
this section.
We recall that a discrete ABP estimate is also proved in  
\cite{KuoTrudinger00} for finite differences on general meshes 
within the abstract framework of \cite{KuoTrudinger92}. However, 
when applied to our finite element method, the estimate in
\cite{KuoTrudinger00} yields sub-optimal results because it replaces 
the measure of star $\abs {\omega_i}$ in \eqref{intro:ABP} by the much 
larger quantity $\abs{B_{\epsilon}(x_i)}$, where $B_{\epsilon}(x_i)$
stand for the set of influence of $x_i$ which, according to
\eqref{Ie}, is of size $\epsilon\gg h$.
We present a novel proof which is more geometric and suitable for
FEM. It is based on the geometric characterization of the 
sub-differential of piecewise linear functions $v_h\in\Vh$ and control of its
measure by the jumps of $\nabla v_h$.

First, we need
a definition. Given $v_h\in\Vh$ with $v_h\ge0$ on $\partial\Omega$, 
we observe that if $x$ belongs to the interior of some 
element $K\in\Th$ and to the contact set $\mathcal{C}^-(v_h)$, 
then the vertices of $K$ are also
in the contact set. This motivates the following definition of 
{\it (lower) nodal contact set} 
-- the discrete counterpart of \eqref{contactset}:
\begin{equation}\label{discrete-contactset}
\mathcal{C}^-_h(v_h) := \{ z \in \Nh : \Gamma (v_h)(z) = v_h(z) \}
\qquad\forall v_h\in\Vh,
\end{equation}
Therefore, $\mathcal{C}^-_h(v_h)$ is just a collection of nodes and $\mathcal{C}^-_h(v_h) \subset \mathcal{C}^-(v_h) \subset \dm$ unless $\Gamma (v_h)=0$.

\begin{theorem}[discrete Alexandroff-Bakelman-Pucci estimate]\label{discrete_ABP}
Let the mesh $\Th$ be shape regular and satisfy \eqref{localweaklyacute}.
Let $v_h \in \Vh$ 
with $v_h \geq 0$ on $\bdry$ satisfy 
\[
L_h^\epsilon v_h(x_i) \leq f_i 
\quad
\text{ for all $x_i \in \Nh$.}
\]
If $\mathcal{C}^-_h(v_h)$ is the nodal contact set of
\eqref{discrete-contactset}, then the discrete ABP estimate reads
\[
\sup_{\dm} v_h^- \leq C \left( \sum_{x_i \in \mathcal{C}_h^-(v_h)} \abs {f_i^+}^d \abs {\omega_i} \right)^{1/d},
\]
where the constant $C = C(\sigma, d, \lambda, \dm)$ and $\abs {\omega_i}$ denotes the volume of the star $\omega_i$.
\end{theorem}

\subsection{Local convex envelope of piecewise affine functions}\label{S:lce}
%
There are two critical issues in dealing with $\Gamma (v_h)$:
first $\Gamma (v_h)$ is not
locally defined and second $\Gamma (v_h)$ is not a piecewise affine
function subordinate to $\Th$.
To overcome the first issue, we define
the {\it local convex envelope} for any $z \in \mathcal{C}^-_h(v_h)$
\begin{align}\label{localfunction}
\Gamma_{z}(v_h) (x) = \sup_{L}
\big\{ L(x) : L \leq v_h \text{ in $\omega_z$, $L$ is affine and } L(z) = v_h(z) \big\}
\end{align}
for all $x \in \omega_z$. We wonder whether $\Gamma_{z}(v_h)\in\Vh$
and explore this question next.

\begin{lemma}[local convex envelope for $d=2$]\label{lce-d=2}
The function $\Gamma_{z}(v_h)\in\Vh$ for all $z\in\Nh$ provided $d=2$.
\end{lemma}
\begin{proof}
Given a triangle with vertices $z=0,x_1,x_2$, let $L_1,L_2\le v_h$ in
$\omega_z$ be two affine functions which satisfy, without loss of generality,
\[
L_1(z)=L_2(z)=v_h(z)=0,\quad
L_1(x_1) > L_2(x_1),\quad
L_1(x_2) < L_2(x_2).
\]
Let $L$ be the affine function which agrees with $L_1$ at $z,x_1$ and
with $L_2$ at $x_2$. Since $v_h$ is affine in $T$, we deduce
$L\le v_h$ in $T$. On the other hand, $L \le \max\{L_1,L_2\}\le v_h$ in 
$\omega_z\setminus T$ because $y=\lambda_1 x_1 + \lambda_2 x_2 \in\omega_z\setminus T$ 
entails either $\lambda_1<0$ or $\lambda_2<0$ and
\[
L(y) = \lambda_1 L(x_1) + \lambda_2 L(x_2) \le \lambda_1 L_2(x_1) + \lambda_2 L_2(x_2)
= L_2(y) \le \max\{L_1(y),L_2(y)\}
\]
if $\lambda_1<0$ or likewise if $\lambda_2<0$.
This implies that $L$ is an admissible function
in the definition of $\Gamma_{z}(v_h)$, whence $\Gamma_{z}(v_h)$ must
be affine in $T$ as asserted.
\end{proof}  
\begin{remark}[local convex envelope for $d=3$]\label{lce-d=3}
Unfortunately, Lemma \ref{lce-d=2} is false for $d=3$.
To see this, we
construct a counterexample: consider the vertices
\[
z_0 = (0,0,-1),\quad
z_1 = (-1,0,0),\quad
z_2 = (0,1,0), \quad
z_3 = (1,0,0),
\]
and tetrahedra $T_1,T_2$ to be the convex hulls of $z_0,z_1,z_2,z_3$ and
$z_0,z_1,-z_2,z_3$. If $v_h$ is piecewise affine with nodal values
$v_h(z_0) = -1, v_h(z_1)=v_h(z_3)=0$ and $v_h(\pm z_2)=-1$, then
the local convex envelope
$\Gamma_{z_0}(v_h)(x)=|x_1|-1$ is not affine in each $T_i$ for $i=1,2$.
\end{remark}
In view of \eqref{localfunction}, we let the {\em local
sub-differential} $\nabla \Gamma_z(v_h)(z)$ at $z \in\mathcal{C}^-_h(v_h)$ be
\looseness=-1
\[
\gradv \Gamma_{z}(v_h) ( z) := \big\{ w \in \mathbb{R}^d: \; \innerpro w {x -  z}
+ \Gamma_{z}(v_h)( z) \leq \Gamma_{z}(v_h)(x) \text{ for all $ x \in \omega_z$ } \big\} .
\]
Comparing with definition \eqref{sub-diff} we immediately deduce the
key property
\begin{align}\label{subdifferential:eq-1}
  \gradv \Gamma (v_h) (z) \subset \gradv \Gamma_{z} (v_h) (z)
  \qquad \forall \, z\in\mathcal{C}^-_h(v_h),
\end{align}
which will be instrumental in the subsequent derivation. In fact, all
  statements involving $\nabla\Gamma(v_h)(z)$ will be proved using
$\nabla\Gamma_{z}(v_h)(z)$ for $z\in\mathcal{C}^-_h(v_h)$ instead.
\subsection{Discrete Alexandroff estimate}
The next Alexandroff estimate for a continuous piecewise affine
function $v_h$ states that the $L^{\infty}$-norm of $v_h$ is
controlled by the Lebesgue measure of the sub-differential of its
convex envelope.

\begin{proposition}[discrete Alexandroff estimate]\label{Alexandroff}
Let $v_h \in \Vh$ with $v_h \geq 0$ on $\bdry$, and $\Gamma
  (v_h)$ be its convex envelope in $B_R$. Then
\begin{align}\label{alex}
  \sup_{\dm} v_h^- \leq C \left( \sum_{x_i \in \mathcal{C}^-_h(v_h) } \abs {\gradv \Gamma (v_h) (x_i)} \right)^{1/d},
\end{align}
where $\abs{ {\gradv \Gamma (v_h) (x_i)} }$ denotes the $d$-Lebesgue
measure of the sub-differential of $\Gamma (v_h)$ associated with the
contact node $x_i\in\mathcal{C}^-_h(v_h)$ and $C = C(d, \Omega)$. 
\end{proposition}
\begin{proof}
We proceed in four steps as follows.

\smallskip
{\bf Step 1.}
We first show that
\[
  \sup_{B_{R}} v_h^- = \sup_{B_{R}} \Gamma (v_h)^-. 
\]
Since $v_h \geq \Gamma (v_h)$, the inequality 
$
\sup_{B_{R}} v_h^- \leq \sup_{B_{R}} \Gamma (v_h)^-
$ 
is obvious. To show the reversed inequality, let 
$
\sup_{B_R}v_h^- = v_h^-(x^*)
$
for some $x^* \in B_{R}$ and let $L$ be a horizontal hyperplane touching
$v_h$ from below at $x^*$. By \eqref{convexenvelop} (definition
of convex envelope) again, we deduce 
\[
  \Gamma (v_h) (x) \geq L(x) = L(x^*) = v_h(x^*) 
  \quad \text{ for all $x \in \dm$,}
\]
whence 
$
\sup_{B_{R}} \Gamma (v_h)^- \leq v_h^-(x^*) .
$ 
Hence, to prove \eqref{alex}, we only need to show that
\begin{align*}
 \sup_{B_{R}} \Gamma (v_h)^- \leq C \left( \sum_{ x_i \in \mathcal{C}^-_h(v_h) } \abs {\gradv \Gamma (v_h) (x_i)} \right)^{1/d}.
\end{align*}

{\bf Step 2.} 
We construct a cone $K(x)$ with vertex at $x^*$ such that
\begin{align*}
  K(x^*) = - \sup_{B_{R}} \Gamma (v_h)^- = - M
  \quad \text{ and } \quad
  K(x) = 0 \text{ on $\partial B_{R}$},
\end{align*}
and assume that $M>0$ for otherwise $\Gamma(v_h) = 0$ and \eqref{alex} is trivial in
view of Step 1; thus $K(x)<0$ for all $x\in B_R$.
We note that for any vector $v \in B_{\frac{M}{2R}}(0)$, the affine function 
$
L(x) = -M + \innerpro v {x - x^*}
$
is a supporting plane of $K(x)$ at point $x^*$, namely
$L(x) \le K(x)$ for all $x\in B_R$ and $L(x^*) = K(x^*)$.  This implies
$
  \gradv K(x^*) \supset  B_{\frac{M}{2R}}(0),
$
whence
\[
\abs{\gradv K(x^*)} \geq C(d)\left( \frac{M}{R} \right)^d.
\]

{\bf Step 3.}
We claim that
\begin{equation}\label{cone-subgrad}
  \gradv K(x^*) \subset \cup \big\{ \gradv \Gamma (v_h)(x_i)  :
  x_i \in \mathcal{C}^-_h(v_h) \big\} .
\end{equation}
This is equivalent to showing that for any supporting plane $L(x)$ of
$K(x)$ at $x = x^*$, there is a parallel supporting plane
$\tilde{L}(x)$ for $\Gamma (v_h) (x)$ at some contact node $y$, namely
$y\in\mathcal{C}_h^-(v_h)$.
Consider the function $v_h(x) - L(x)$, and observe that $ v_h(x) \geq 0$ 
on $\partial \dm$ and $v_h(x^*) = K(x^*) = L(x^*)$, whence
\begin{align*}
    v_h(x) - L(x) &\; \geq \; K(x) - L(x) 
    \geq 0 \qquad \text{on $\partial  \dm$},
  \\
    v_h(x^*) - L(x^*) &\; = \; K(x^*) - L(x^*) = 0.
\end{align*}
We infer that 
$
v_h - L 	
$
attains a non-positive minimum inside $\dm$ at $y$. Hence, 
$\widetilde{L}(x) := L(x) + v_h(y) - L(y)$ is a parallel supporting 
plane for $ v_h(x)$ at $y$. Since, according to  \eqref{convexenvelop},
every supporting plane of $v_h$ is a supporting plane of
$\Gamma(v_h)$, we find that
$\widetilde{L}(x) \leq \Gamma (v_h)(x)\leq  v_h(x)$ with equality
at $x=y$. 
The function $v_h - L$, being piecewise affine in $\dm$,
attains its minimum at a node of $\Th$, whence $y \in \mathcal{C}^-_h (v_h)$.

\smallskip
{\bf Step 4.}
Computing Lebesgue measures in \eqref{cone-subgrad} yields
\[
 C(d)\left( \frac{M}{R} \right)^d  \leq 
 \abs{ \gradv K(x^*) } \leq \sum_{x_i \in \mathcal{C}^-_h(v_h) } \abs{ \gradv \Gamma (v_h)(x_i)  }.
\]
Finally, \eqref{alex} follows from a simple algebraic manipulation.
\end{proof}

In view of Proposition \ref{Alexandroff} (discrete Alexandroff estimate) and
\eqref{subdifferential:eq-1}, to prove Theorem \ref{discrete_ABP}
(discrete ABP estimate), we intend to relate $\abs{ {\gradv
    \Gamma_{x_i} (v_h) (x_i)} }$ with the discrete Laplacian at
the contact node $x_i$, namely to show 
\[ 
\Abs{ {\gradv \Gamma_{x_i} (v_h) (x_i)} } \leq C \big( \laplace_h v_h
(x_i) \big) ^d \abs{\omega_i} 
\quad 
\text{ for all $x_i \in \mathcal{C}^-_h(v_h)$,}
\] 
for $C=C(d,\lambda,\Omega,G)$ where $G$ is a geometric constant
defined below in \eqref{const-G}.
This entails estimating $\abs{ {\gradv \Gamma_{x_i} (v_h) (x_i)} }$ 
in terms of the jumps $J_F(\Gamma_{x_i} (v_h))$ across faces $F$
containing $x_i$ according to \eqref{discretelaplace}.
This is precisely our next task.

\subsection{Sub-differential of convex piecewise linear functions}
Given $ x_i \in \mathcal{C}^-_h(v_h)$, let $\{z_j\}_{j=1}^m = \omega_i \cap \Nh$ be the set of nodes connected with $x_i$ and $ \Gamma_{x_{i}} (v_h)$ be the local convex envelope defined in \eqref{localfunction}. 
Without loss of generality, we assume $ x_i = 0$ and $ \Gamma_{x_{i}} (v_h) \geq 0$ in $\omega_{i}$ with equality at node $ x_i$ only. 
We further assume $\Gamma_{x_{i}} (v_h)\in\Vh$ and
simplify the notation in this subsection upon writing
\begin{equation*}\label{gamma}
\gamma( x) := \Gamma_{x_{i}} (v_h)( x),
\quad
\gamma(0)=0,
\quad 
\gradv \gamma ( 0)  := \gradv \Gamma_{x_{i}} (v_h)( x_i),
\quad 
\omega := \omega_i.
\end{equation*}

Our goal in this section is to show the following proposition.
Let $\F(0)$ denote the set of $(d-1)$-dim simplices (faces) containing the origin.
\begin{prop}[estimate of $\abs{ \gradv \gamma ( 0)}$]\label{estimate_of_subdifferential}
Let $\gamma$ be a convex piecewise affine function on a star $\omega$
centered at the origin.
Then there is a constant $C = C(d)$ such that
\[
\abs{ \gradv \gamma ( 0) } \leq C \left( \sum_{F \in \F(0)} J_F(\gamma) \right)^{d} .
\]
\end{prop}
We first point out that the jump $J_F(\gamma)$ across face $F$ has a sign. 
\begin{lemma}[sign of $J_F(\gamma)$]
  \label{sign}
  If $\gamma$ is a convex function in $\omega$, then $J_F(\gamma) \geq 0$ for all faces $F \in \F(0)$.
\end{lemma}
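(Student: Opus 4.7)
The plan is to reduce the claim to the one-dimensional statement that a convex function of one variable has a non-decreasing derivative. First, I unpack the definition of $J_F(\gamma)$. Let $F$ be a face containing the origin and let $K^+, K^-\in\Th$ be the two elements sharing $F$. Choose a unit vector $n$ normal to $F$, oriented so that $n$ points from $K^-$ into $K^+$. Then the outer unit normals satisfy $n_F^- = n$ and $n_F^+ = -n$, so
\[
J_F(\gamma) = -n_F^+\cdot\gradv\gamma|_{K^+} - n_F^-\cdot\gradv\gamma|_{K^-}
= n\cdot\gradv\gamma|_{K^+} - n\cdot\gradv\gamma|_{K^-}.
\]
Thus $J_F(\gamma)$ is precisely the jump across $F$, in the $n$-direction, of the one-sided normal derivative.

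Next, I fix an interior point $x_0\in F$ and consider the scalar function $g(t) := \gamma(x_0 + t n)$ for $t$ in a small interval $(-\delta,\delta)$, so that $x_0+tn\in K^-$ for $t<0$ and $x_0+tn\in K^+$ for $t>0$. Since $\gamma$ is convex on $\omega$ and $g$ is the restriction of $\gamma$ to a line segment lying in $\omega$, $g$ is a convex function of one real variable. Moreover, $g$ is piecewise affine with a single possible breakpoint at $t=0$: on $t<0$ it has slope $n\cdot\gradv\gamma|_{K^-}$ and on $t>0$ it has slope $n\cdot\gradv\gamma|_{K^+}$.

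Convexity of $g$ implies that its right derivative at $0$ is at least its left derivative at $0$, i.e.
\[
n\cdot\gradv\gamma|_{K^+} \geq n\cdot\gradv\gamma|_{K^-},
\]
which is exactly $J_F(\gamma)\geq 0$. The statement does not depend on the orientation choice of $n$, since flipping $n$ swaps the roles of $K^+$ and $K^-$ and leaves $J_F(\gamma)$ unchanged. I do not expect any obstacle here; the only mildly delicate point is making sure that the segment $\{x_0+tn : |t|<\delta\}$ stays inside $\omega$, which is immediate by choosing $x_0$ in the relative interior of $F$ and $\delta$ small, and that the resulting inequality passes to all of $F$ by continuity of the piecewise affine gradients on each element.
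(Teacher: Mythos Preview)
Your proof is correct and follows essentially the same approach as the paper's. Both arguments restrict $\gamma$ to a line through a point of $F$ in the normal direction and use one-dimensional convexity; the paper expresses this via the nonnegativity of the centered second difference $\frac{1}{\epsilon}\big(\gamma(x+\epsilon n_F)+\gamma(x-\epsilon n_F)-2\gamma(x)\big)$, while you phrase it as the right derivative dominating the left derivative, which is the same thing for a piecewise affine convex function.
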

\begin{proof}
Let $\{K^\pm\} \subset \omega$ be the elements sharing $F$ and
$\N F := K^+ \cup K^-$. If $n_F$ is the normal vector of $F$ pointing from $K^+$ to $K^-$, then $J_F(\gamma)$ reads
  \[
J_F(\gamma) = \gradv \gamma|_{K^-} \cdot n_F - \gradv \gamma|_{K^+}  \cdot n_F.
  \]
Take a point $ x \in F$ and $\epsilon > 0$ sufficiently small such
that $ x \pm \epsilon n_F \in \N F$. Since $\gamma ( x)$ is piecewise affine and convex, we have
\[
J_F (\gamma) =  \gradv \gamma|_{K^-} \cdot n_F - \gradv \gamma|_{K^+}  \cdot n_F 
= \frac{  \gamma( x + \epsilon n_F) + \gamma( x - \epsilon n_F) - 2 \gamma(x)}{\epsilon} \geq 0,
\]
which is the asserted inequality.
\end{proof}

It is easy to see that $\gradv \gamma(0)$ is always a convex set.
Since $ \gamma( x) $ is a piecewise linear function on $\omega$, we have a more precise characterization.
\begin{lemma}
  [characterization of $\gradv \gamma ( 0)$]
  \label{characterization}
  The local sub-differential  $\gradv \gamma ( 0)$ is a convex polytope determined by the intersection of the half-spaces
  \[
    S_j := \{  w \in \mathbb{R}^d : \innerpro w { {z}_j} \leq \gamma(z_j) \}
    \quad 1 \leq j \leq m.
  \]
  Moreover, a vector $ w$ is in the interior of $\gradv \gamma (0)$ if and only if all the inequalities
\[
  \innerpro w {{z}_j} < \gamma({z}_j)
    \quad 1 \leq j \leq m
\]
hold strictly.
\end{lemma}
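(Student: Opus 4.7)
The plan is to verify both assertions by a direct unwinding of the definition of local sub-differential in \eqref{localfunction}, relying on two facts: $\gamma$ is piecewise affine on the simplices of $\omega$, and $\gamma(0) = 0$ at the contact node (by the normalization stated just before the lemma).

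My first step is to prove the set equality $\gradv \gamma(0) = \bigcap_{j=1}^m S_j$. The inclusion ``$\subset$'' is essentially by inspection: plugging each neighbor $z_j \in \omega$ into the defining inequality of $\gradv \gamma(0)$ and using $\gamma(0) = 0$ yields $\innerpro{w}{z_j} \leq \gamma(z_j)$, i.e.\ $w \in S_j$. For the reverse inclusion, I would take an arbitrary $x \in \omega$ and exploit that $\omega$ is the star of the node $0$: $x$ lies in some simplex $K \subset \omega$ having $0$ as a vertex, hence has a barycentric representation $x = \lambda_0 \cdot 0 + \sum_{j \in J(K)} \lambda_j z_j$ with $\lambda_0,\lambda_j \ge 0$ and $\lambda_0 + \sum_j \lambda_j = 1$. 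Affinity of $\gamma$ on $K$ together with $\gamma(0) = 0$ turns this into $\gamma(x) = \sum_{j \in J(K)} \lambda_j \gamma(z_j)$, and linearity of the inner product then gives
\[
\innerpro{w}{x} = \sum_{j \in J(K)} \lambda_j \innerpro{w}{z_j} \leq \sum_{j \in J(K)} \lambda_j \gamma(z_j) = \gamma(x),
\]
so $w \in \gradv \gamma(0)$. The resulting set is convex as an intersection of half-spaces, and it is bounded (hence a polytope) because the vectors $\{z_j\}$ span $\mathbb{R}^d$: any unit direction $v$ satisfies $\innerpro{v}{z_j} > 0$ for some $j$, so $S_j$ bounds $w$ from above along $v$.

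For the interior characterization, the key observation is the elementary identity $\mathrm{int}\bigl(\bigcap_j S_j\bigr) = \bigcap_j \mathrm{int}(S_j)$ for a finite intersection of closed half-spaces: a ball $B_\epsilon(w)$ lies in $\bigcap_j S_j$ iff it lies in each $S_j$ individually, with one direction trivial and the other obtained by taking $\epsilon = \min_j \epsilon_j$. Because each neighbor satisfies $z_j \neq 0$, the interior of $S_j$ is precisely the open half-space $\{w : \innerpro{w}{z_j} < \gamma(z_j)\}$, yielding the claimed strict-inequality description of $\mathrm{int}\,\gradv \gamma(0)$.

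The only (mild) obstacle I anticipate is bookkeeping the barycentric expansion on the simplex $K$ so that the normalization $\gamma(0) = 0$ cleanly eliminates the $\lambda_0$ term from both $\gamma(x)$ and $\innerpro{w}{x}$; once that is set up, the rest of the argument is routine convex analysis and should be very short.
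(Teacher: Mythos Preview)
Your proposal is correct and follows essentially the same approach as the paper's proof: the paper also reduces the sub-differential inequality to the nodal inequalities via piecewise affinity (your barycentric expansion is just the explicit unpacking of the paper's one-line ``since $\gamma(x)$ is piecewise affine''), and the paper's explicit $\epsilon$-argument for the interior characterization is exactly the concrete instance of your finite-intersection identity $\mathrm{int}\bigl(\bigcap_j S_j\bigr) = \bigcap_j \mathrm{int}(S_j)$. One minor remark: your justification of boundedness (``the $\{z_j\}$ span $\mathbb{R}^d$'') is not quite the right hypothesis---spanning alone does not guarantee that every direction has a $z_j$ with positive inner product; what you actually need is that $0$ lies in the interior of the convex hull of the $z_j$, which holds because $0$ is an interior node of the star $\omega$---but the paper's own proof does not address boundedness either, so this does not affect the comparison.
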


\begin{proof}
Since $ \gamma(x) $ is a piecewise affine function, any vector $ w$ is in the sub-differential $\gradv \gamma ( 0) $ if and only if
$
\innerpro w  {z_j} \leq \gamma( z_j)  \text{ for all $ z_j$}.
$
Therefore, the sub-differential $\gradv \gamma ( 0)$ is determined by the intersection of the half-spaces $S_j$ for $1\leq j \leq m$.
If $\innerpro w {z_j} < \gamma({z}_j)$ for all $ 1 \leq j \leq m$, then for $\epsilon > 0$ sufficiently small such that 
\[
  \epsilon |{z}_j| \leq \gamma ({z}_j) - \innerpro w {z_j} 
  \quad 1 \leq j \leq m,
\]
we deduce 
\[
\innerpro { w + v} {z_j} \leq \gamma ({z}_j),
\]
for all $ v$ in the small $B_{\epsilon}(0)$ with radius $\epsilon$ and centered at $0$, 
whence $ w +  v \in \gradv \gamma ( 0)$. This implies that $ w$ is in the interior of $\gradv \gamma ( 0)$.  The argument can be reversed to prove the equivalence. 
\end{proof}
Lemma \ref{characterization} immediately leads to two important consequences. First, if $\gamma(x) \geq 0$ with equality only at the origin, i.e. $\gamma(z_j) >0$ for all $1 \leq j \leq m$, then the vector $ 0$ is in the interior of $\gradv \gamma(0)$. This implies that $\gradv \gamma( 0)$ has a non-empty interior and is thus $d$-dimensional.
Second, a vector $ w$ is on the boundary $\partial \gradv \gamma ( 0)$
of the sub-differential $\gradv \gamma ( 0)$ 
if and only if equality holds for at least one of the inequalities
\[
  \innerpro w {z_j} \leq \gamma({z}_j)
    \quad 1 \leq j \leq m.
\]
The second consequence gives a characterization of $\partial \gradv \gamma (0)$ which motivates us to introduce the notion of dual set below.

Let $T$ be an $n$-dim simplex with $0 \leq n \leq d$ such that $ 0 \in T$. We define 
\begin{equation}\label{NT}
  \N T = \cup \{ K \subset \omega : \; T \subset K , K \text{ $d$-dim simplex}\},
\end{equation}
and the {\it dual set} $T^*$ of $T$ with respect to a convex piecewise affine function $\gamma$
\begin{align}\label{dualset}
T^* = \{ {w} \in \gradv \gamma ( 0): \; \innerpro {w} {z} = \gamma ( z) \; \forall  z \in T \}.
\end{align}
\begin{lemma}
  [geometry of $T^*$]
  \label{geometry_of_T*}
  The dual set $T^*$ of an $n$-dim simplex $T$ is a convex polytope contained in the $(d -n)$-dim plane
  \[
    P = \{ w \in \mathbb{R}^d: \; \innerpro  w  z = \gamma ( z) \quad \forall  z \in T \},
  \]
  which happens to be orthogonal to $T$.
\end{lemma}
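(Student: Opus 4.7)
The approach is to first exhibit an affine function that agrees with $\gamma$ on $T$ (using that $\gamma$ is piecewise affine subordinate to $\Th$), then express $P$ as the solution set of a linear system to read off its dimension and orthogonality, and finally intersect with the polytope $\gradv\gamma(0)$ characterized by Lemma \ref{characterization}.

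First I would label the vertices of $T$ as $0,z_1,\ldots,z_n$ (the origin is a vertex because $T$ arises from a sub-simplex of $\Th$ containing $0$). Since $T$ lies in the star $\omega$ and $\gamma|_K$ is affine on every $K \subset \omega$ sharing $T$, and these affine pieces must agree on $T$, the restriction $\gamma|_T$ is affine. Using $\gamma(0)=0$, there exists $w_0\in\mathbb{R}^d$ with
\[
\gamma(z) \;=\; \innerpro{w_0}{z} \qquad \forall\, z \in T.
\]
Concretely one may take $w_0 = \gradv \gamma|_K$ for any $d$-simplex $K\subset\omega$ containing $T$.

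Next I would rewrite the defining condition of $P$. For any $w\in\mathbb{R}^d$, the identity $\innerpro{w}{z}=\gamma(z)=\innerpro{w_0}{z}$ for all $z\in T$ is linear in $z$, so it is equivalent to the finite system $\innerpro{w-w_0}{z_j}=0$ for $j=1,\ldots,n$. Because $\{z_1,\ldots,z_n\}$ spans the linear subspace $V:=\mathrm{span}(T)$ (of dimension $n$, since $T$ is an $n$-simplex through the origin), this is equivalent to $w-w_0 \in V^{\perp}$. Hence
\[
P \;=\; w_0 + V^{\perp},
\]
which is an affine plane of dimension $d-n$ and, by construction, orthogonal to the linear span of $T$. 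This gives the dimension and orthogonality claims.

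Finally, by \eqref{dualset}, $T^{*} = \gradv \gamma(0)\cap P$. Lemma \ref{characterization} shows that $\gradv \gamma(0)$ is a convex polytope, being the intersection of the finitely many half-spaces $S_j$. The intersection of a convex polytope with an affine subspace is again a convex polytope (the affine subspace $P$ is a finite intersection of hyperplanes, each hyperplane being the intersection of two half-spaces). Therefore $T^{*}\subset P$ is a convex polytope, completing the proof. The argument is essentially a bookkeeping exercise; the only mild subtlety is justifying that $\gamma|_T$ is single-valued and affine, which I would handle by invoking continuity of $\gamma$ together with the fact that the affine pieces $\gamma|_K$ must agree on the shared face $T$.
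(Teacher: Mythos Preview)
Your proof is correct and follows essentially the same approach as the paper's: both obtain $T^*\subset P$ directly from the definition, identify $T^*$ as the intersection of $P$ with the finitely many half-spaces $S_j$ from Lemma~\ref{characterization}, and deduce orthogonality from $\innerpro{w_1-w_2}{z}=0$ for $w_1,w_2\in P$ and $z\in T$. Your version is slightly more explicit in that you exhibit a concrete point $w_0=\gradv\gamma|_K\in P$ and write $P=w_0+V^\perp$, which makes the dimension count $d-n$ transparent; the paper leaves this implicit.
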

\begin{proof}
  It is obvious that $T^*$ is a subset of $P$. Moreover, in view of Lemma \ref{characterization} (characterization of $\gradv \gamma ( 0)$) and the definition \eqref{dualset}, we realize that
  $
    T^* = \cap_{j=1}^m S_j \cap P
  $
  which means that $T^*$ is a convex polytope bounded by the half-spaces $\{ S_j \}, 1\leq j \leq m$.

To show that $P$ is orthogonal to $T$ we see that, given arbitrary $ w_1,  w_2 \in P$,  
$
 \innerpro {w_1 -  w_2} z = 0 
$
for all $ z \in T$. This proves the claim.
\end{proof}

The geometry of $T^*$ is rather simple in two dimensions as the following example illustrates. 
\\
$\bullet$ { \bf Case I  ($0$-dim simplex):} If $T = \{0\}$, then $\N T = \omega$. It is easy to check by definition that $T^*$ is nothing but the sub-differential $\gradv \gamma( 0)$.
\\
$\bullet$   { \bf Case II  ($2$-dim simplex):}
If $T = K$ is an element, then $\N K = K$. If a vector $ w \in K^*$,
then the equality $\innerpro {w} {z} = \gamma( z)$ for all $ z \in K$ implies
${w} = \gradv \gamma|_K$.
It is easy to check that the constant gradient $\gradv \gamma|_K $ is in the sub-differential $\gradv \gamma( 0)$ by using the convexity of function $\gamma$. Hence,  we conclude that $K^*$ consists of one vector $\gradv \gamma|_K$ only.
\\
$\bullet$  { \bf Case III  ($1$-dim simplex):}
Finally, we consider the most complicated case by taking $T = F$ which
is a face with two vertices $ 0,  z_1$. Then, $\N F$ consists of two elements  $K^\pm$ sharing the face $F$.
Lemma \ref{geometry_of_T*} implies $F^*$ is contained in the line 
\[
  \{ 
  w \in \mathbb{R}^2, \; \innerpro  w {z_1} = \gamma ( z_1)
\}
\]
It is easy to check that  $ \innerpro   {\gradv \gamma |_{K^\pm}} {z_1} = \gamma ( z_1)$ which implies that the two constant gradients $ \gradv \gamma |_{K^\pm} \in F^*$.
We claim that 
\begin{align}
  \label{claim2}
  F^*
  \text{ is the line segment joining the  two vectors $\gradv \gamma|_{K^\pm}$.}
\end{align}
Moreover, Lemma \ref{characterization} gives us the following characterization of $\partial \gradv \gamma ( 0)$ 
\begin{align}
  \label{claim3}
  \partial \gradv \gamma ( 0) = \cup \{ F^* ,\;  0 \in F \} ,
\end{align}
namely the boundary of $\gradv \gamma (0)$ is made of straight segments joining $\gradv \gamma |_{K}$ on consecutive triangles $K$ clockwise.
Both claims are proved in Proposition \ref{face_subdifferential} below in a more general setting which holds for any space dimensions.
Figure \ref{F:dual-set} depicts a face $T=[z_1,z_3]$ and its dual
  set $T^*$ for $d=2$.

\begin{figure}[h!]
\begin{tikzpicture} 
  \coordinate (z1) at (0,0);
  \coordinate (z2) at (0.5,3);
  \coordinate (z3) at (3,2.5);
  \coordinate (z4) at (3,-0.5);
  \coordinate (T)  at (2.1,1.75);
  \coordinate (K+)  at (1.2,2);
  \coordinate (K-)  at (2,0.5);
  \coordinate (o) at (7,0);
  \coordinate (zz3) at (10,2.5);
  \coordinate (p1) at (7.25,3.5);
  \coordinate (p2) at (11,-1);
  \coordinate (g-) at (9.75,0.5);
  \coordinate (g+) at (8.0,2.6);
  \coordinate (T*)  at (8.7,2.2);
  \draw (z1)  [below]  node {$z_1=0$};
  \draw (z2)  [above left]  node {$z_2$};
  \draw (z3)  [above right] node {$z_3$};
  \draw (z4)  [below right] node {$z_4$};
  \draw (T)   [below right] node {$T$};
  \draw (K+)                node {$K^+$};
  \draw (K-)                node {$K^-$};
  \draw (o)   [below left]  node {$0$};
  \draw (zz3) [above right] node {$z_3$};
  \draw (g-)  [right] node {$\gradv \gamma|_{K^-}$};
  \draw (g+)  [above right] node {$\gradv \gamma|_{K^+}$};
  \draw (p2)  [above right] node {$P$};
  \draw (T*)                node {$T^*$};
  \draw [-latex] (z1) -- (z2)  ;
  \draw [-latex, ultra thick] (z1) -- (z3)  ;
  \draw [-latex] (z1) -- (z4)  ;
  \draw (z4) -- (z3) -- (z2) ;
  \draw [-latex , thick] (o) -- (zz3) ;
  \draw  (p1) -- (p2);
  \draw [double, thick](g+) -- (g-);
  \draw [-latex] (o) -- (g+);
  \draw [-latex] (o) -- (g-);
\end{tikzpicture}
\caption{\small Face $T = [z_1, z_3]$ and its dual set $T^*$ (segment joining $\gradv \gamma|_{K^{\pm}}$). The latter lies on a straight line $P$ perpendicular to $z_3$.}
\label{F:dual-set}
\end{figure}
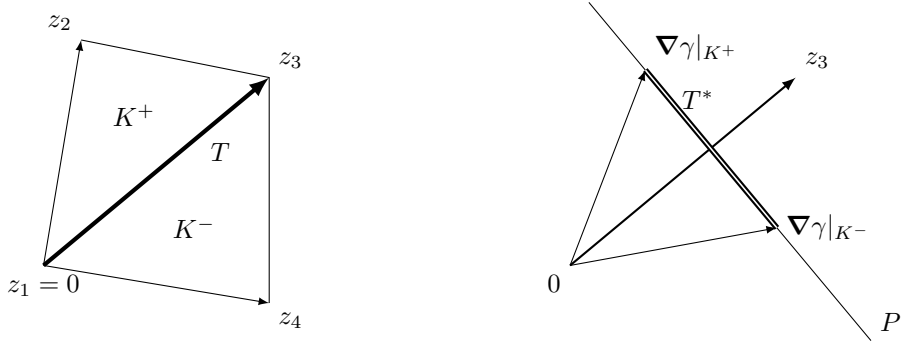
\noindent
Finally, we mention that combining claims \eqref{claim3} and \eqref{claim2} implies that 
\begin{align*}
  \gradv \gamma ( 0)
  \text{ is the (convex hull) polygon with vertices $\{ \gradv \gamma |_K , \; K \subset \omega \}$.}
\end{align*}

We now establish a characterization of dual set $T^*$ for any $n$-dim simplex $T$, which is inspired in \cite{Grunbaum03} and extends the preceding discussion to any dimension $d$.
\begin{prop}[characterization of dual set]
\label{face_subdifferential}
Let $0 \leq n < d$ and 
\begin{align*}
T  &\; \text{ be an $n$-dim simplex of $\omega$ such that $ 0 \in T$,} 
\\
 \mathscr{S} &\; \text{ be the set of $(n+1)$-dim simplices $S$ of $\omega$ such that $S \supset T$.}
\end{align*}
The dual set $T^*$ of $T$ is the convex polytope given by 
\[
T^* = \{  w \in \mathbb{R}^d: \; \innerpro  w z = \gamma ( z) \; \forall z \in T 
\quad \text{and} \quad
 \innerpro w  {z} \leq \gamma ( z) \; \forall  z \in \N T \}.
\]
Moreover, the boundary $\partial T^*$ of $T^*$ is given by $\partial T^* = \cup \{ S^* : \; S \in \mathscr{S} \}$. 
\end{prop}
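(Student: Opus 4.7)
I would split the proof into the two assertions: first the inequality/equality description of $T^*$, then the relative boundary identity. The guiding idea is that the definition \eqref{dualset} of $T^*$ is ``local'' in the star $\omega$, but we can sharpen it by replacing the global constraint $w \in \gradv\gamma(0)$ with a neighborhood-only constraint on $\N T$, using the convexity of $\gamma$ to upgrade a local bound into a global one.

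\textbf{Step 1 (polytope characterization).} Call $T^\sharp$ the set on the right-hand side. The inclusion $T^* \subset T^\sharp$ is immediate from $T^* \subset \gradv\gamma(0)$ and $\N T \subset \omega$. For the reverse inclusion I would take $w \in T^\sharp$ and set $L(x) := \innerpro{w}{x}$. Since $0 \in T$ and $\gamma(0) = 0$, the equality constraints give $L = \gamma$ at every vertex of $T$, and thus $L = \gamma$ on $T$. Because $\gamma$ is affine on each $d$-simplex $K \subset \N T$ and $L \le \gamma$ at all vertices of $K$, the bound $L \le \gamma$ holds throughout $\N T$, which is a neighborhood of $0$ in $\omega$. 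Convexity of $\gamma$ now propagates this globally: for any admissible direction $v$, the function $t \mapsto \gamma(tv) - L(tv)$ is convex, vanishes at $t = 0$, and is nonnegative for $|t|$ small, hence nonnegative for all $t$ with $tv \in \omega$. Thus $L \le \gamma$ on $\omega$, so $w \in \gradv\gamma(0)$, and the equalities on $T$ yield $w \in T^*$.

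\textbf{Step 2 (boundary characterization).} By Lemma \ref{geometry_of_T*}, $T^\sharp$ sits inside the $(d{-}n)$-dimensional plane $P$, so its relative boundary $\partial T^*$ consists precisely of those $w \in T^*$ for which at least one inequality $\innerpro{w}{z} \le \gamma(z)$ with $z \in \N T \setminus T$ is saturated (otherwise one could perturb $w$ within $P$ in both directions and remain in $T^\sharp$). For the inclusion $\supset$, given $w \in S^*$ with $S \in \mathscr{S}$, the equalities on $T \subset S$ place $w$ in $T^*$, and any extra vertex $z^* \in S \setminus T \subset \N T$ witnesses saturation, so $w \in \partial T^*$. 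For $\subset$, if $w \in \partial T^*$ and $z^*$ is a saturating vertex belonging to some $d$-simplex $K \supset T$, then conformity of the mesh $\Th$ implies that $T \cup \{z^*\}$ is the vertex set of an $(n{+}1)$-face $S$ of $K$, so $S \in \mathscr{S}$; since $\gamma|_S$ is affine and agrees with $L$ at every vertex of $S$, we conclude $w \in S^*$.

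\textbf{Expected obstacle.} The main subtlety is the global propagation step at the end of Step 1: the piecewise-affine convexity of $\gamma$ is essential, since without convexity an affine function bounded by $\gamma$ on a neighborhood of $0$ could easily exceed $\gamma$ elsewhere. The boundary part is largely combinatorial, relying only on the mesh being conforming simplicial so that adjoining any vertex of $\N T$ to $T$ produces a genuine face of a single $d$-simplex of $\Th$.
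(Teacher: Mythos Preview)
Your overall strategy matches the paper's: use convexity of $\gamma$ to upgrade the local bound on $\N T$ to a global one on $\omega$, then characterize the relative boundary via saturation of the vertex constraints. Step~2 is essentially the paper's argument and is fine.

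There is, however, a real gap in Step~1. You assert that $\N T$ is a neighborhood of $0$ in $\omega$, and your ray argument is centered at $0$: you need $tv \in \N T$ for $|t|$ small in order to claim $t \mapsto \gamma(tv)-L(tv)$ is nonnegative near $t=0$. This is only true when $n=0$ (then $\N T=\omega$). For $n\ge 1$, the vertex $0$ lies on the boundary of $\N T$ inside $\omega$: there are $d$-simplices of $\omega$ containing $0$ but not $T$, so rays from $0$ pointing into those simplices leave $\N T$ immediately, and you have no control of $\gamma - L$ along them for small $t>0$. Concretely, in $d=2$ with $T$ an edge $[0,z_1]$, $\N T$ is just the two triangles sharing that edge, and $0$ is a boundary point of their union.

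The fix, which is precisely what the paper does, is to center the convexity argument at a point $z_1$ in the \emph{relative interior} of $T$: such a $z_1$ does lie in the interior of $\N T$ (any small ball around $z_1$ is covered by the $d$-simplices containing $T$), and you still have $L(z_1)=\gamma(z_1)$ since $L=\gamma$ on all of $T$. Then for arbitrary $x\in\omega$ the one-sided convexity argument along the segment from $z_1$ to $x$ goes through exactly as you wrote it. The paper phrases this as a contradiction (take $z_0\in\omega$ with $\innerpro{w}{z_0}>\gamma(z_0)$, connect to $z_1$, and violate $L\le\gamma$ on $\N T$ for small $\lambda$), but it is the same mechanism.
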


  Before proving Proposition \ref{face_subdifferential}, we apply it
  to characterize the geometry of the boundary $\partial \gradv \gamma
  ( 0)$ of the sub-differential $\gradv \gamma ( 0)$ for $d = 2, 3$.
We denote by $\{ E \}$ (edges) the set of all 1-dim simplices
  for $d=3$ such that ${0} \in E$, and by $\{ F \}$ (faces)
  the set of all $(d-1)$-dim simplices for $d=2, 3$ such that ${0} \in F$. 
  We let $\{ K \}$ be the set of all $d$-dim simplices (tetrahedra for
  $d=3$ and triangles for $d=2$) such that ${0} \in K$.

\begin{corollary}[characterization of $\partial \gradv \gamma ( 0)$ for $d = 2, 3$]\label{subdifferentialfacelattice}
For $d = 2$,  the boundary  $\partial \gradv \gamma ( 0)$  of the sub-differential $\gradv \gamma ( 0)$ is the union of dual sets $F^*$ for all edges $F \subset \omega$ such that $ 0 \in F$. 
Each dual set $F^*$ is the segment with endpoints $\{\gradv \gamma |_{K^{\pm}}: \;  K^{\pm} \subset \N F\}$ and the length of $F^*$ equals the jump $J_F$.
\\
For $d = 3$, the boundary $\partial \gradv \gamma ( 0)$ is the union of dual sets $ E^*$ for all edges $E \subset \omega$ such that $ 0 \in E $.
The boundary $\partial E^*$ is the union of dual sets $F^*$ for all faces $F$ such that $E \subset F $.
Each dual set $F^*$ is a segment with endpoints $\{\gradv \gamma |_{K^{\pm}}: \;  K^{\pm} \subset \N F\}$ and the length of $F^*$ is the  jump $J_F$.
\end{corollary}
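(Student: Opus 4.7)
The plan is to iterate Proposition \ref{face_subdifferential} along the face lattice of $\omega$ through the origin, starting from $T = \{0\}$ and progressively increasing $\dim T$ by one at each step, until we reach a full $d$-dimensional element $K$ whose dual set is reduced to a single point by the dimension count in Lemma \ref{geometry_of_T*}.

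For $d = 2$, I first apply Proposition \ref{face_subdifferential} with $T = \{0\}$: then $\mathscr{S}$ is the collection of edges $F$ with $0 \in F$, and since $T^* = \gradv\gamma(0)$ by definition, I obtain $\partial \gradv\gamma(0) = \cup\{F^* : 0 \in F\}$. Next, with $T = F$ an edge through $0$, the set $\mathscr{S}$ consists of the two triangles $K^\pm$ sharing $F$, and Case II of the text identifies $(K^\pm)^*$ with the single point $\gradv\gamma|_{K^\pm}$. By Lemma \ref{geometry_of_T*}, $F^*$ is a convex polytope lying on the $1$-dimensional affine line $P$ orthogonal to $F$, and its boundary consists of exactly two points; convexity then forces $F^*$ to be the closed segment joining $\gradv\gamma|_{K^+}$ and $\gradv\gamma|_{K^-}$. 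To compute the length, I note that both endpoints lie in $P$, hence $\gradv\gamma|_{K^-} - \gradv\gamma|_{K^+}$ is parallel to $n_F$ with scalar coefficient $(\gradv\gamma|_{K^-} - \gradv\gamma|_{K^+})\cdot n_F = J_F(\gamma) \geq 0$ by Lemma \ref{sign}; therefore the length of $F^*$ equals $J_F(\gamma)$.

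For $d = 3$ the argument is the same but requires one additional descending step. Starting with $T = \{0\}$, Proposition \ref{face_subdifferential} gives $\partial\gradv\gamma(0) = \cup\{E^* : 0 \in E\}$, where $E$ runs over edges through the origin. Taking $T = E$, the set $\mathscr{S}$ consists of all $2$-dimensional faces $F$ with $E \subset F$, so $\partial E^* = \cup\{F^* : E \subset F\}$. Finally, with $T = F$ a $2$-dimensional face, $\mathscr{S} = \{K^+, K^-\}$ is the pair of tetrahedra sharing $F$, and $(K^\pm)^*$ reduces to the single point $\gradv\gamma|_{K^\pm}$ by the same dimension-counting argument. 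Lemma \ref{geometry_of_T*} again identifies $F^*$ as a convex polytope lying on the $1$-dimensional line $P$ orthogonal to $F$, hence $F^*$ is the segment joining $\gradv\gamma|_{K^+}$ and $\gradv\gamma|_{K^-}$; the length calculation is identical to the two-dimensional case.

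The main subtlety I expect is justifying that $F^*$ coincides with the \emph{entire} closed segment between its two extreme points rather than being a proper subset of it. This is settled by combining the convexity of $F^*$ with the $1$-dimensionality of its ambient plane $P$ from Lemma \ref{geometry_of_T*}: a convex subset of a line whose relative boundary consists of exactly two points must equal the closed segment between those points. Everything else is a straightforward bookkeeping exercise on the face lattice provided by Proposition \ref{face_subdifferential}, combined with the sign information of Lemma \ref{sign} to identify the scalar coefficient along $n_F$ with the nonnegative jump $J_F(\gamma)$.
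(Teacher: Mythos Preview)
Your proposal is correct and follows essentially the same approach as the paper: iterate Proposition \ref{face_subdifferential} down the face lattice from $T=\{0\}$ to $T=K$, identify $(K^\pm)^*$ with the single point $\gradv\gamma|_{K^\pm}$, and compute $|F^*|$ via orthogonality to $F$ together with Lemma \ref{sign}. The only difference is that you are slightly more explicit than the paper in justifying why $F^*$ equals the full closed segment (convex subset of a line with two relative boundary points), whereas the paper simply asserts this and moves on.
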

\begin{proof}
We only prove the lemma for $d= 3$;
the case $d=2$ is simpler.
To prove $\partial \gradv \gamma ( 0) = \cup E^*$, we take $T$ in Proposition \ref{face_subdifferential} to be the origin ($0$-dim simplex) and $\mathscr{S}$ to be the set of all edges ($1$-dim simplices) $E \ni  0$. 
Since $T^* = \gradv \gamma ( 0)$, the first assertion follows immediately from Proposition \ref{face_subdifferential}.

Similarly, to prove 
\[
 \partial E^* = \cup F^* \quad \forall F \supset E 
\qquad \text{and} \qquad
 \partial F^* = \cup (K^{\pm})^* 
\quad \forall K^{\pm} \supset F 
\]
we take $T$ to be either an edge $E$ or a face $F$ and $\mathscr S$ to be $\{ F:\; F\supset E \}$ or $\{K^\pm: \;  K^\pm \supset F\}$ respectively. The second assertion follows again directly from Proposition \ref{face_subdifferential}.

Finally, since $F^*$ is the line segment connecting $(K^\pm)^* = \gradv \gamma|_{K^\pm}$, we deduce that the length $\abs{F^*}$ of $F^*$ satisfies
\[
\abs{F^*} = \abs{ \gradv \gamma|_{K^+} - \gradv \gamma|_{K^-}}.
\]
The fact that $\gradv \gamma|_{K^+} - \gradv \gamma|_{K^-}$ is perpendicular to $F$, indeed equal to $J_F n_F$ with $ n_F$ being the unit normal pointing from $K^+$ to $K^-$, in conjunction with Lemma \ref{sign} (sign of $J_F$), yields $\abs {F^*} = J_F$ as asserted. 
\end{proof}

Now we proceed to prove Proposition \ref{face_subdifferential}. 
\begin{proof}[Proof of Proposition \ref{face_subdifferential}]
To show the first statement, we note that by definition \eqref{dualset}, we have $\innerpro w z \leq \gamma(z)$ for all $z \in \omega$, whence
\[
T^* \subset \{  w \in \mathbb{R}^d: \; \innerpro  w {z} = \gamma ( z) \; \forall  z \in T 
\quad \text{and} \quad
 \innerpro w  {z} \leq \gamma ( z), \; \forall z \in \N T \}.
\]
To show the reversed inclusion, we argue by contradiction: assume that $ \innerpro w  {z} \leq \gamma ( z)$ for all $ z \in \N T$ with equality for all $ z \in T$, but $ w \notin T^*$ or equivalently $ w \notin \gradv \gamma ( 0)$. Then there is a point $ z_0 \in \omega$ such that 
$
\innerpro {z_0}  w > \gamma ( z_0).
$
Let $ z_1 \in T$ be a point in the interior of $\N T$. Hence $\gamma( z_1) = \innerpro w {z_1}$ and due to the convexity of $\gamma( z)$, for $0 < \lambda < 1$,
\begin{align*}
\gamma (\lambda  z_0 + (1 - \lambda)  z_1 ) 
\leq
\lambda \gamma ( z_0 )  + (1 - \lambda) \gamma ( z_1 )
<
 \innerpro {\lambda {z}_0 + (1 - \lambda) {z}_1 }  w .
\end{align*}
Since $ z_1$ belongs to the interior of $\N T$, we have $ \lambda z_0 + (1 - \lambda) z_1 \in \N T$  for $\lambda$ small enough. Consequently, the inequality contradicts the assumption that $ \innerpro w z \leq \gamma( z)$ for all $ z \in \N T$.
This proves the first statement.

Now, we show that $\partial T^* = \cup S^*$ for all $S\in \mathscr{S}$. In view of \eqref{dualset}, this is equivalent to showing that $ w \in \partial T^*$ if and only if $ w \in \gradv \gamma ( 0)$ and the equality
\begin{align}\label{prop:eq1}
 \innerpro w {z} = \gamma( z) 
\text{ holds for all $ z \in S$ and some $(n+1)$-dim simplex $S \supset T$. }
\end{align}
Let $ z_s $ be the vertex of $S$ off the simplex $T$ ($ z_s \notin T$). Since $\gamma(x)$ is piecewise affine, \eqref{prop:eq1} is equivalent to showing that
\begin{align}\label{eq1}
 \innerpro w  {z_s} = \gamma( z_s) \quad \text{ for some $S \in \mathscr S$} .
\end{align}
We also recall from Lemma \ref{geometry_of_T*} (geometry of $T^*$) that $T^*$ is contained in the $(d - n)$-dim plane
\[
  P = \{  w \in \mathbb{R}^d : \; \innerpro w  z = \gamma ( z) \quad \forall  z \in T \}
\]
which is orthogonal to $T$. 
Consequently, a vector $ w \in T^*$ is in the interior of $ T^*$ if and only if there is a small $\epsilon >0$ such that 
$ w + \epsilon  n \in T^*$ for any unit vector $ n \; \bot \; T$. Equivalently,
\begin{align}\label{prop:eq2}
 w \in \partial T^* \iff \exists  n \; \bot \; T \text{ such that $ w + \epsilon  n \notin T^* $ for any $\epsilon > 0$.}
\end{align}

We first prove that if $ w \in \partial T^*$, then \eqref{eq1} holds. If not, then
\[
 \innerpro w {z_s} < \gamma( z_s) 
\quad 
\text{for all vertices } z_s 
\quad \text{ and } \quad
 \innerpro w {z} = \gamma( z)
\quad \text{ for all $ z \in T$.}
\]
There is $\epsilon > 0$ sufficiently small such that, 
\[
 \innerpro{ w + \epsilon {n}} {z_s} \leq \gamma( z_s) \quad \forall z_s 
\quad \text{and} \quad 
 \innerpro w  z = \gamma( z) \quad \forall  z \in T
\]
for any unit vector ${n}$ orthogonal to $T$. 
Since $\gamma( z)$ is piecewise linear, this implies that for each element $K \subset \N T$
\[
 \innerpro { w + \epsilon {n}} {z} \leq \gamma( z)          
\quad \forall  z \in K. 
\quad \text{ and } \quad
 \innerpro {w + \epsilon {n}} {z} = \gamma( z) 
\text{ for all $ z \in T$,}
\]
whence $( w + \epsilon  n) \in T^*$ for any $ n \bot T$ according to the first assertion of this Proposition. This contradicts that $ w \in \partial T^*$ in view of \eqref{prop:eq2}.

We next show that if \eqref{eq1} holds for some vector $w\in T^*$, then $ w \in \partial T^*$. 
Let ${p}( z_s)$ be the orthogonal projection of $ z_s$ onto the face $T$ and ${n}( z_s) = {z}_s - {p}( z_s)$; obviously, the vector ${n}( z_s) \neq {0}$ and ${n}( z_s) \;\bot\; T$. 
Since $ \innerpro w {z_s} = \gamma ( z_s)$, we obtain
\[
\innerpro { w + \epsilon {n}( z_s)} {z_s} = \gamma( z_s) + \epsilon | {n}( z_s) |^2 > \gamma( z_s) 
\quad \text{ for all $\epsilon > 0$ }
\] 
whence $ w + \epsilon {n}( z_s) \notin \nabla\gamma(0)$ for any
$\epsilon > 0$. This implies that $w  + \epsilon {n}( z_s) \notin T^*$
for any $\epsilon > 0$ because
$T^*\subset\nabla\gamma(0)$ according to \eqref{dualset}.
With the aid of \eqref{prop:eq2} we thus deduce $ w \in\partial T^*$,
and conclude the proof.
\end{proof}

\begin{proof}[Proof of Proposition \ref{estimate_of_subdifferential}]
The proof hinges on the isoperimetric inequality relating the measure $\abs P$ of an $n$-dim polytope $P$ with that of its perimeter $\abs {\partial P}$: there exists a constant $C = C(n)$, thereby depending on $d$, so that
\[
\abs P \leq C \abs{\partial P}^{n / (n-1)}.
\]
The proof proceeds by dimension reduction. We know that $\gradv \gamma
( 0)$ is the dual set of $T = \{ 0\}$ and, by virtue of
Proposition \ref{face_subdifferential} (characterization of dual set), that
\[
\partial \gradv \gamma( 0) = \cup \{ S_1^* :\; S_1 \in \mathscr{S}_1( 0) \}
\]
where $ \mathscr{S}_1( 0)$ is the set of all $1$-dim simplices of $\omega$ such that $ 0 \in S_1$. Therefore 
\begin{equation}\label{1st-step}
\abs { \gradv \gamma( 0) } \leq C \abs{ \partial \gradv \gamma( 0) }^{d/(d-1)} \leq C \left( \sum_{S_1 \in \mathscr{S}_1( 0)} \abs{S_1^*} \right)^{d/(d-1)}.
\end{equation}
The dual sets $S_1^*$ are convex $(d-1)$-dim polytopes orthogonal to $S_1$, according to Lemma \ref{geometry_of_T*} (geometry of $T^*$). Applying again Proposition \ref{face_subdifferential}, this time to $T=S_1$, we obtain
\[
\partial S_1^* = \cup \{ S_2^* :\; S_2 \in \mathscr{S}_2(S_1) \}
\]
where
$\mathscr{S}_2(S_1)$ stands for all $2$-dim simplices $S_2$ of $\omega$ such that $S_1 \subset S_2$. Hence
\[
\abs{S_1^*} \leq C \abs{\partial S_1^*}^{(d-1)/(d-2)} \leq C \left( \sum_{S_2 \in \mathscr{S}_2(S_1)} \abs{S_2^*} \right)^{(d-1)/(d-2)}.
\]
Inserting this in the expression for $\abs{\gradv \gamma ( 0)}$, we get 
\[
\abs{\gradv \gamma ( 0)} \leq C \left( \sum_{S_1 \in \mathscr{S}_1( 0)} \left( \sum_{S_2 \in \mathscr{S}_2(S_1)}  \abs{S_2^*} \right)^{(d-1)/(d-2)}   \right)^{d/(d-1)}.
\]
Since $\sum_{i} a_i^t \leq (\sum_i a_i)^t $ is valid for any nonnegative sequence $\{ a_i \}$ and $t \geq 1$, the preceding inequality becomes
\begin{equation}\label{2nd-step}
\abs{\gradv \gamma ( 0)} \leq C \left(  \sum_{S_1 \in \mathscr{S}_1( 0)}  \sum_{S_2 \in \mathscr{S}_2(S_1)} \abs{S_2^*}  \right)^{d/(d-2)}. 
\end{equation}
Moreover, each $2$-dim simplex $S_2$ contains exactly two $1$-dim simplices $S_1 \ni  0$. This allows us to rewrite $\abs{ \gradv \gamma ( 0)}$ with $C$ modified by a factor $2^{d/(d-2)}$ as follows:
\[
\abs{ \gradv \gamma ( 0)} \leq C \left( \sum_{S_2 \in \mathscr{S}_2( 0)} \abs{S_2^*}  \right)^{d/(d-2)}.
\]
Iterating this argument, we easily arrive at 
\[
\abs{ \gradv \gamma ( 0)} \leq C \left(  \sum_{S_{d-1} \in \mathscr{S}_{d-1}( 0)} \abs{S_{d-1}^*} \right)^d,
\]
with $C = C(d)$. The dual set $S_{d-1}^*$ of a $(d-1)$-simplex $S_{d-1} = F$ or face $F$, is a $1$-dim segment connecting $\gradv \gamma|_{K^\pm}$ where $K^\pm \in \Th$ are the elements sharing $F$ (see proof of Corollary \ref{subdifferentialfacelattice}). Consequently, 
\[
\abs{F^*} = J_F
\]
because the length $\abs{F^*}$ of $F^*$ equals the jump $J_F$. This concludes the proof. 
\end{proof}

\subsection{Proof of Theorem \ref{discrete_ABP}}
We are now ready to prove the discrete ABP estimate
for $d=2,3$ and comment on the case $d > 3$. We start with $d = 3$ for which the main difficulty is that
$\gamma=\gradv \Gamma_{x_i} (v_h)$ may not belong to $\Vh$, whence its jumps
$J_F(\gamma)$ may not be directly related to those
of $\nabla v_h$, namely $J_F(v_h)$. We proceed as in 
Proposition \ref{estimate_of_subdifferential} upon reducing the dimension.
  
Let $x_i \in \mathcal{C}_h^- (v_h)$ be a (lower) contact node
for $v_h$ and let $\gamma(x_i) = 0$ for simplicity.
In view of \eqref{1st-step}, there is a constant $C$ depending on the
dimension $d$ such that
\begin{equation*}
  |\nabla\gamma(x_i)| \le C \left( \sum_{S_j\in\mathscr{S}_1(x_i)}
  |S_j^*| \right)^{d/(d-1)},
\end{equation*}
where $\mathscr{S}_1(x_i)$ is the set of edges (or 1-dim simplices)
$S_j$ connecting nodes $x_j$ and $x_i$ and $S_j^*$ is the
dual set of $S_j$ with respect to $\gamma$
\[
S_j^* = \{w\in\nabla\gamma(x_i): \, \langle w,x_j-x_i\rangle = \gamma(x_j)\}.
\]
To estimate $|S_j^*|$ we introduce a convex function
$\gamma_j$ defined in $\omega_{ij}$ as follows:
\[
\gamma_j(x) := \sup_{L \textrm{ affine}}\{L(x): \, L=\gamma \textrm{ on } S_{j},
\, L(x_k) \le \gamma(x_k) \text{ for all } x_k \in \Nh(\omega_{ij}) \},
\]
where $\omega_{ij} := \N{S_j}$ is defined in \eqref{NT} and
$\Nh(\omega_{ij}) := \Nh\cap\omega_{ij}$. The same proof of Lemma
\ref{lce-d=2} shows that $\gamma_j\in\Vh(\omega_{ij})$. Since the sub-differential
$\nabla\gamma_j(x_i)$ is
\[
\nabla\gamma_j(x_i) = \{ w\in\mathbb{R}^d: \,
\langle w, x_j-x_i\rangle = \gamma(x_j), \,
\langle w, x_k-x_i\rangle \le \gamma(x_k) \,\, \forall x_k \in \Nh(\omega_{ij}) \},
\]
we deduce $S_j^* \subset \nabla\gamma_j(x_i)$, whence $|S_j^*| \le|\nabla\gamma_j(x_i)|$
and we have to estimate the latter. The set $\nabla\gamma_j(x_i)$ is a
convex polygon perpendicular to the edge $S_j$ and is the dual
set of $S_j$ with respect to the convex function $\gamma_j$.
Applying Proposition \ref{face_subdifferential} we get an expression for
$\partial\nabla\gamma_j(x_i)$, namely
\[
\partial\nabla\gamma_j(x_i) = \cup \{F^*: \, F\in\mathscr{S}_2(S_j)  \},
\]
where $\mathscr{S}_2(S_j)$ is the set of faces (or 2-dim simplices)
containing $S_j$. The dual sets $F^*$ are 1-dim segments connecting
the gradient $\nabla\gamma_j$ in the two elements sharing $F$, whence
$|F^*|=J_F(\gamma_j)$. Consequently, we infer that
\[
|\nabla\gamma_j(x_i)| \le C |\partial\nabla\gamma_j(x_i)|^{\frac{d-1}{d-2}}
\le C \left( \sum_{F\in\mathscr{S}_2(S_j)} J_F(\gamma_j) \right)^{(d-1)/(d-2)}
\]
and, arguing as in the proof of Proposition \ref{estimate_of_subdifferential},
we further obtain
\begin{equation}\label{subdiff}
|\nabla\gamma(x_i)| \le C \left( \sum_{S_j\in\mathscr{S}_1(x_i)}
\sum_{F\in\mathscr{S}_2(S_j)} J_F(\gamma_j) \right)^d.
\end{equation}

It remains to estimate the right-hand side of this expression. 
It is worth mentioning here that we could use induction and an
argument similar to that below to deal with dimension $d > 3$. For
simplicity, we just prove the assertion of Theorem \ref{discrete_ABP} for $d =3$.
We first recall that $J_F(\gamma_j)\ge0$ according to Lemma
\ref{sign}.
Since $\abs{F}\simeq \abs{\omega_i}^{1- \frac1d}$ and $J_F(\gamma_j)$ is
constant, we can write
\[
\sum_{F\in\mathscr{S}_2(S_j)} J_F(\gamma_j) \le C |\omega_i|^{\frac1d-1}
\sum_{F\in\mathscr{S}_2(S_j)} \int_F J_F(\gamma_j) \phi_i\phi_j,
\]
where the constant $C$ depends on the dimension $d$ and geometric quantity
\begin{equation}\label{const-G}
G:= \max_{\Th\in\mathbb{T}}\max_{x_i\in\Nh}\max_{F \ni x_i} \big\{\abs{F}^{-d}\abs{\omega_i}^{d-1} \big\}.
\end{equation}
We next exploit that $\phi_i\phi_j$ vanishes on $\partial\omega_{ij}$
to integrate by parts and thereby obtain
\[
\sum_{F\in\mathscr{S}_2(S_j)} J_F(\gamma_j) \le - C |\omega_i|^{\frac1d-1}
\int_{\omega_{ij}} \nabla\gamma_j \cdot \nabla(\phi_i\phi_j).
\]
Since $\gamma_j,\phi_j,\phi_i$ are all piecewise linear, the
right-hand side reads
\begin{align*}
\int_{\omega_{ij}} \nabla\gamma_j \cdot \nabla(\phi_i\phi_j)
& = \int_{\omega_{ij}} \nabla\gamma_j \cdot \nabla\phi_i \, \phi_j
+ \nabla\gamma_j \cdot \nabla\phi_j \, \phi_i
\\
& = \frac{1}{d+1} \int_{\omega_{ij}} \nabla\gamma_j \cdot \nabla\phi_i
+ \nabla\gamma_j \cdot \nabla\phi_j.
\end{align*}
We now resort to \eqref{localweaklyacute}, the face weakly acute
condition on $\Th$, to replace $\gamma_j$ by $I_h\gamma$. In fact, we know
that $\gamma_j(x) = \sum_{x_k\in\Nh(\omega_{ij})} \gamma_j(x_k)\phi_k$ 
and $\gamma_j(x_k) \le \gamma(x_k)$ with equality at $x_k=x_i$ and
$x_k=x_j$, whence
\[
\int_{\omega_{ij}} \nabla\gamma_j \cdot \nabla\phi_i
= \sum_{x_k\in\Nh(\omega_{ij})} \gamma_j(x_k)\int_{\omega_{ij}}\nabla\phi_k\cdot\nabla\phi_i
\ge \sum_{x_k\in\Nh(\omega_{ij})} \gamma(x_k)\int_{\omega_{ij}}\nabla\phi_k\cdot\nabla\phi_i.
\]
Since the same inequality hold for the remaining term
$\int_{\omega_{ij}} \nabla\gamma_j \nabla\phi_j$, we infer that
\[
\int_{\omega_{ij}} \nabla\gamma_j \cdot \nabla(\phi_i\phi_j)
\ge \int_{\omega_{ij}} \nabla I_h\gamma \cdot \nabla(\phi_i\phi_j).
\]
To complete the estimate of the right-hand side of \eqref{subdiff} we
must add over $S_j\in\mathscr{S}_1(x_i)$. We now make use of
$\sum_{S_j\in\mathscr{S}_1(x_i)} \phi_j = 1 -\phi_i$ together with
\eqref{lack-consistency} to obtain
\begin{align*}
\sum_{S_j\in\mathscr{S}_1(x_i)}\sum_{F\in\mathscr{S}_2(S_j)} J_F(\gamma_j)
& \le -C |\omega_i|^{\frac1d-1}
\int_{\omega_i} \nabla I_h\gamma \cdot \big(\nabla\phi_i -2\phi_i\nabla\phi_i\big)
\\
& = -C \frac{d-1}{d+1} |\omega_i|^{\frac1d-1} \int_{\omega_i} \nabla
I_h\gamma \cdot \nabla\phi_i
= C |\omega_i|^{\frac1d} \Delta_h I_h\gamma (x_i).
\end{align*}

Since
\[
I_h\gamma(x) \leq v_h(x) \qquad \text{ for all $x \in \omega_i$}
\]
with equality at $ x_i$, the monotonicity property of $\laplace_h$
in Lemma \ref{Monotonicity} yields
\[
\laplace_h I_h\gamma(x_i)
\leq 
\laplace_h v_h (x_i) .
\]
Now to prove Theorem \ref{discrete_ABP}, we only need to show that 
\[
\laplace_h v_h (x_i) \leq C f_i
\qquad \forall \, x_i \in \mathcal{C}_h^- (v_h).
\]
Since the (global) convex envelope
$\Gamma (v_h)$ touches $v_h$ at $x_i$ from below, we get
\[
0 \leq \Ie \Gamma (v_h) (x_i) \leq \Ie  v_h (x_i)
\]
where the first inequality follows from the convexity of $\Gamma(v_h)$
and the second one from the monotonicity of operator $\Ie$ in Lemma
\ref{Monotonicity}. Hence, by the definition \eqref{discrete_pde} of
discrete operator $L_h^\epsilon$ and the fact that $f_i\ge0$ for
$x_i\in\mathcal{C}_h^- (v_h)$, we obtain
\[
\laplace_h v_h (x_i) \leq \frac{2}{\lambda} L_h^\epsilon v_h (x_i)
\leq \frac{2}{\lambda} f_i  = \frac{2}{\lambda} f_i^+.
\]

Altogether, utilizing \eqref{subdiff}, we conclude that 
\[
\abs{ {\gradv \Gamma (v_h) (x_i)} } \leq
\abs{ {\gradv \gamma (x_i)}} \leq
C |f_i^+|^d |\omega_i| \qquad \forall x_i \in \mathcal{C}_h^- (v_h).
\]
Finally, invoking Proposition \ref{Alexandroff}
(discrete Alexandroff estimate), we arrive at
\begin{equation}\label{discrete-ABP}
  \sup_{\dm} v_h^- \leq C \left( \sum_{x_i \in \mathcal{C}_h^- (v_h) }
  \abs {f_i^+}^d \abs {\omega_i} \right)^{1/d},
\end{equation}
which is the desired discrete ABP estimate. 
This completes the proof
for $d=3$. The case $d=2$ is simpler because $I_h\gamma=\gamma$
and the first step above already gives
\[
|\nabla\gamma(x_i)| \le C \left( \sum_{F\in\mathscr{S}_1(x_i)}
  |F^*| \right)^{2} = C \left( \sum_{F\in\mathscr{S}_1(x_i)}
  J_F(\gamma) \right)^{2} \le C |\omega_i| \big(\Delta_h\gamma(x_i)\big)^2.
\]

The proof shows that the constant $C$ in \eqref{discrete-ABP} depends
on $\lambda^{-1}$ and the constants $C(d, \lambda, \dm)$ in
Proposition \ref{Alexandroff} (discrete Alexandroff estimate) and $G$ in
\eqref{const-G}, rather than the shape regularity constant $\sigma$.
Therefore, $C$ is independent of the number $n$ of elements within $\omega_i$,
which is an improvement over \cite{KuoTrudinger00} where $C$ depends on 
$n$. 

%
                    \section{A priori error estimates}\label{S:error-estimate}
%

In this section, we proceed as follows to
derive rates of convergence for the FEM.
In $\S$ \ref{subsec:Galerkinprojection}, we review a finite element
approximation $u_G$ of the solution $u$, commonly known as Galerkin projection. In $\S$ \ref{subsec:boundary-layer}, we introduce a boundary layer function which is instrumental to deal with points $\epsilon$-close to the boundary $\partial \dm$. 
In $\S$ \ref{subsec:erroreqn}, we derive the error equation \eqref{erroreqn} for $\ue_h - u_G$. In $\S$ \ref{subsec:estimateTi}, we examine \eqref{erroreqn} and show that the various terms exhibit a decay rate, measured in $L^d$-norm, in the region bounded $\epsilon$-away from the boundary. 
In $\S$ \ref{subsec:barrierfunction}, we develop a discrete barrier
function which is instrumental in controlling the behavior of the
error $\ue_h - u_G$ in the region $\epsilon$-close to the boundary. 
We conclude in $\S$ \ref{subsec:rates} and
\ref{subsec:pwsmooth} with pointwise rates of
convergence, which combine the discrete ABP estimate and the discrete
barrier technique.
In \S \ref{subsec:rates} we deal with
$C^{2,\alpha}$ solutions whereas in \S \ref{subsec:pwsmooth} we allow
solutions to be piecewise $C^{2,\alpha}$.
Throughout this section, we take $\epsilon = \epsilon (h) \geq Ch |\ln
h|$ so that $h/\epsilon(h)\to0$ as $h\to0$.

\subsection{Galerkin projection}\label{subsec:Galerkinprojection}
%
We have already shown in \eqref{inconsistency} that $\Delta_h I_h u(x_i)$
  does not converge to $\Delta u(x_i)$ as $h\to0$ for general meshes $\Th$. To
  circumvent this operator inconsistency, we borrow an idea from
  \cite{JensenSmears13} and consider the Galerkin projection $u_G$ of
  $u$ instead.

We recall that $\dm$ must be at least $C^{1,1}$ for the solution $u$
of \eqref{pde} to be of class $W^2_\infty(\dm)$.
Let $\dm_h$ be a polytope induced by $\Th$ with boundary nodes on
$\partial\dm$.

We define the Galerkin (or elliptic) projection $u_G \in \Vhzero$ of
$u$ as follows:
\begin{align}\label{Galerkinprojection}
\int_{\dm_h} \gradv u_G \cdot \gradv v_h = \int_{\dm_h} \gradv u \cdot\gradv v_h
= - \int_{\dm_h} \Delta u \, v_h
\quad \text{ for all $v_h \in \Vhzero$} ,
\end{align}
provided $u\in C^2(\overline{\dm})$ is suitably extended to $\dm_h$.
Upon taking $v_h = \phi_i$, we have
\begin{align}\label{consistency}
\laplace_h u_G(x_i) = \frac{ \int_{\dm_h} \phi_i \laplace u} {\int_{\dm_h} \phi_i}
\qquad
\forall x_i \in \Nh,
\end{align}
according to \eqref{lack-consistency}.
Therefore, the discrete Laplacian $\laplace_h u_G$ of $u_G$ is a
weighted mean of $\laplace u$ over the star $\omega_i$ and thus converges
to $\Delta u(x_i)$ as $h\to0$ in contrast to $\laplace_h I_h u$.

Our discretization satisfies the following three standard
assumptions \cite{SchatzWahlbin82}:
\begin{itemize}
  \item  The partition $\Th$ of $\dm_h$ is quasi-uniform and
    shape regular;
  \item  The Hausdorff distance between $\partial \dm$ and $\partial \dm_h$ satisfies 
    \[
    \text{\rm dist }(\partial\dm_h,\partial\dm)
    = \max_{x \in \partial \dm_h} \text{\rm dist }(x, \bdry) \leq C h^2;
    \]
  \item  Functions $v \in W^2_\infty(\dm)$ that vanish on $\bdry$ can
    be approximated by piecewise linear functions that vanish on
    $\partial\dm_h$ to order $h^2$ in the maximum norm. 
\end{itemize}
Therefere, the convergence rate of the Galerkin projection $u_G$ in the
$L^{\infty}$-norm is known to be quasi-optimal for
$u\in C^0(\overline{\dm})$ and is given by \cite{SchatzWahlbin82}
\begin{align}\label{Linfty}
  \inftynormh { u - u_G } \leq C  |\ln h| \inf_{v_h \in \Vhzero}
  \inftynormh { u - v_h }  .
\end{align}
Moreover, if $ u \in W^{2}_{\infty}(\dm)$, then the third bullet above implies
\begin{align}\label{Galerkininftyestimate}
\inftynormh { u - u_G } \leq C h^2  |\ln h| \, |u|_{W^2_\infty (\dm)}.
\end{align}
In view of these results, and to avoid technical difficulties, we make the
somewhat standard simplifying assumption that $\dm_h=\dm$.
Thanks to \eqref{Galerkininftyestimate}, for all $x_i \in \Nh$ such that ${\rm dist}(x_i, \bdry) \geq Q \epsilon$, we obtain
\begin{align*}
 \Abs{\delta u_G (x_i, y) - \delta u (x_i, y)} \leq 
 C h^2  |\ln h| \,  |u|_{W^2_\infty (\dm)} 
\end{align*}
which, by definition \eqref{integral} of the integral operator $\Ie$, implies
\begin{align}
\label{estimateIe}
 \Abs{\Ie u_G (x_i) - \Ie u (x_i)}
 \leq C \frac{h^2} {\epsilon^2} |\ln h| \, |u|_{W^2_\infty  (\dm)}.
\end{align}

\subsection{Boundary layer function}\label{subsec:boundary-layer}
We introduce now a boundary layer function $b:\Omega\to\mathbb{R}^-$ which is
instrumental in dealing with the boundary layer
$\omega_\epsilon:=\Omega\setminus\Omega_\epsilon$,
where $\Omega_\epsilon$ is defined in \eqref{Omega-eps}. Let
$\dist (x)$ be the distance function from
$x\in\Omega$ to $\partial\Omega$, which inherits the same
regularity as $\partial\Omega$ for $x$ close to the boundary,
that is, $\dist(x)$ is of class $C^{1,1}$ provided $\dist(x) \leq Q \epsilon$
and $\epsilon$ is small. Let
$\zeta:\mathbb{R}^+\to\mathbb{R}^-$ be
\[
\zeta(s) :=
\begin{cases}
  Q^{-2} \big( s - Q\epsilon \big)^2 - \epsilon^2 & \quad s \le Q\epsilon
  \\
  -\epsilon^2 & \quad s > Q \epsilon,
\end{cases}
\]
and note that $\zeta''(s) = 2Q^{-2}\chi_{(0,Q\epsilon)}(s)$ where $\chi_{(0,Q\epsilon)}$ is the characteristic function of $(0, Q\epsilon)$. Let the
function $b$ be given by
\[
b(x) := \zeta(\dist(x))
\quad\forall \, x\in\Omega,
\]
and observe the simple but important properties
\begin{gather*}
\nabla b(x) = \zeta'(\dist(x)) \nabla \dist(x),
\\
D^2 b(x) = \zeta''(\dist(x)) \nabla \dist(x)\otimes \nabla \dist(x) + \zeta'(\dist(x))
D^2 \dist(x).
\end{gather*}
\begin{lemma}[integral operator of $b$]\label{L:boundlayer-intopr}
There is a constant $C>0$ such that
\begin{equation}\label{boundlayer-intopr}
I_\epsilon b(x) \ge C \chi_{\omega_\epsilon}(x) \quad\forall \, x\in\Omega,
\end{equation}  
i.e. $b$ is non-negative in $\Omega$ and strictly positive in $\omega_\epsilon$.
\end{lemma}
\begin{proof}
If $x\notin\omega_\epsilon$, then $b(x)=-\epsilon^2\le b(x+y), b(x-y)$
and $\delta b(x,y) \ge 0$ whence $I_\epsilon b(x)\ge0$. Therefore, we
consider $x\in\omega_\epsilon$ and observe that, in view of
\eqref{differenceformula}, it suffices to deal with
\[
\delta^+(x,y) := \int_0^1 \int_0^1 s D^2 b(x+sty) : y\otimes y \, dt ds
\]
and
\[
I^+_\epsilon b(x) := \int_{B_1(0)} \int_0^1 \int_0^1
s D^2 b \big(x+ts\theta\epsilon M(x) z \big) : M(x) z \otimes M(x) z \,
\vphi(z) \, ds dt dz.
\]
We further decompose $I^+_\epsilon b(x)$ into two terms according to
the expression of $D^2 b(x)$
\begin{align*}
A_\epsilon (x) &:= \int_{B_1(0)} \int_0^1 \int_0^1
s \zeta''\big(\dist(x(z)) \big) \big|\nabla \dist(x(z)) \cdot M(x)z  \big|^2
\, \vphi(z) \, ds dt dz,
\\
B_\epsilon (x) &:=
\int_{B_1(0)} \int_0^1 \int_0^1
\zeta' \big( \dist(z(x)) \big) D^2 \dist(x(z)) : M(x)z \otimes M(x) z \, \vphi(z) 
\, ds dt dz,
\end{align*}  
where $x(z) := x+ts\theta\epsilon M(x) z$.
We now introduce the ellipsoid $E_\epsilon(x)$ and cone $C(x)$,
centered at $x$ and with opening $\arccos \beta<\pi/2$, defined as follows:
\begin{equation}\label{ellipsoid}
E_\epsilon(x) := \{x(z): \, z\in B_1(0)\},
\quad
C(x) := \{ y : \, \langle x-y, \nabla \dist(x) \rangle \ge \beta |x-y| \}.
\end{equation}
We point out that the set $C_\epsilon := E_\epsilon(x) \cap C(x)$
satisfies the important property
\[
|C_\epsilon(x)| \ge c |E_\epsilon(x)\cap\omega_\epsilon| \ge c |E_\epsilon(x)|.
\]
\indent
We examine $A_\epsilon(x)$ first. Since
$\big|\nabla [\dist(x(z)) - \dist(x)] \big| \le c \epsilon|M(x)z|$, we deduce
\[
\big| \nabla \dist(x(z)) \cdot M(x) z  \big| \ge c \beta |z|
\quad\forall \, x(z) \in C(x).
\]
If $D_1(0):=\{z\in B_1(0): x(z)\in C_\epsilon(x)\}$, then
$|D_1(0)|\ge c |B_1(0)|$ and
\[
A_\epsilon(x) \ge c \beta \int_{D_1(0)} \int_0^1\int_0^1 s \vphi(z) |z| ds dt dz\ge C_1>0.
\]
On the other hand, using that $|\zeta'(\dist(x(z)))|\le c\epsilon$ for
$z\in B_1(0)$ in conjunction with the uniform bound of $D^2 \dist(x(z))$ provided
$\partial\Omega \in C^{1,1}$, we readily obtain 
$|B_\epsilon(x)| \le C_2 \epsilon$. This implies
\[
I_\epsilon^+ b(x) \ge C_1 - C_2 \epsilon \ge \frac12 C_1
\quad\forall \, x\in\omega_\epsilon,
\]
which translates into $I_\epsilon b \ge c \chi_{\omega_\epsilon}$
and concludes the proof.
\end{proof}  

We now discretize the boundary layer function $b$ upon defining
\begin{equation}\label{discrete-dlf}
b_h := I_h b.
\end{equation}  
\begin{lemma}[properties of $b_h$]\label{L:discrete-dlf}
There is a constant $C$ independent of $\epsilon, h$ such that
\begin{equation}\label{properties-dlf}
  I_\epsilon b_h (x_i)\ge C \chi_{\omega_\epsilon}(x_i),
  \quad
  L^\epsilon_h  b_h (x_i) \ge C\chi_{\omega_\epsilon}(x_i)
  \quad \forall \, x_i \in \Nh.
\end{equation}
\end{lemma}  
\begin{proof}
We fix $x_i\in\Nh$ and let $\dist_i(x):=\dist(x_i) + \nabla \dist(x_i) \cdot (x-x_i)$
and $\psi$ be the convex function
\[
\psi(x) := \zeta \big( \dist_i(x) \big) - \ell(x),
\]
where $\ell$ is a linear function (corrector) with the following properties
\[
\psi(x_i) = b(x_i),
\quad
\int_{\omega_i} \nabla \psi = \int_{\omega_i} \nabla b.
\]
\indent
1 {\it Integral operator.} If $x_i\notin \omega_\epsilon$, then we
again have $I_\epsilon b_h(x_i) \geq 0$ as in Lemma \ref{L:boundlayer-intopr}.
Let's consider $x_i\in\omega_\epsilon$ and write
\[
I_\epsilon b_h(x_i) = I_\epsilon b(x_i) + I_\epsilon [b_h-b](x_i)
\]
In light of Lemma  \ref{L:boundlayer-intopr} it suffices to show that
$\big| I_\epsilon [b_h-b](x_i) \big|$ is small relative to $1$, or
equivalently $\big|b_h-b\big| (x_i)$ is small relative to
$\epsilon^2$.
Write
\[
b_h-b = I_h(b-\psi) - (b-\psi) + I_h\psi - \psi,
\]
and note that $\delta [I_h\psi - \psi](x_i, y) \ge0$, whence $I_\epsilon[I_h\psi - \psi](x_i)\ge0$, 
because $\psi$ is convex. Therefore, we only have to bound the first
two terms, namely \cite{BrennerScott},
\[
\big| (b-\psi) - I_h(b-\psi) \big|(y)
\leq C h^{2-\frac{d}{p}} \|D^2(b-\psi)\|_{L^p(B_{h}(y))}
\]
for $2-\frac{d}{p}>0$ and any $y=x_i+\epsilon M(x_i) z$ with $z \in B_1(0)$. 
This yields
\[
\big| I_\epsilon[(b-\psi)-I_h(b-\psi)](x_i) \big|
\leq C\frac{h^{2-\frac{d}{p}}}{\epsilon^2} \|D^2(b-\psi)\|_{L^p(E_{\epsilon+h}(x_i))},
\]
where $E_{\epsilon}(x_i)$ is the ellipsoid introduced in \eqref{ellipsoid}.
We need to estimate
\begin{align*}
D^2(b-\psi)(x) & =
\zeta''(\dist(x)) \nabla \dist(x)\otimes\nabla \dist(x)
\\
& - \zeta''( \dist_i(x) )  \nabla \dist(x_i)\otimes\nabla \dist(x_i)
+ \zeta'(\dist(x)) D^2 \dist(x)
\end{align*}
for $x\in E_{\epsilon+h}(x_i)$. The third term is the simplest because
$|\zeta'(\dist(x))|\le C \epsilon$ for all $x\in\omega_\epsilon$. The
first two terms are problematic because $\zeta''$ is discontinuous. We
split them as follows:
\begin{align*}
T_1(x) + T_2(x) + T_3(x) : & =
\big[\zeta''(\dist(x)) - \zeta''(\dist_i(x))\big] \nabla \dist(x) \otimes \nabla \dist(x)
\\
& + \zeta''(\dist_i(x)) [\nabla \dist(x) - \nabla \dist(x_i) ] \otimes \nabla \dist(x)
\\
& + \zeta''(\dist_i(x))  \nabla \dist(x_i) \otimes [\nabla \dist(x) - \nabla \dist(x_i) ].
\end{align*}
The function
$\zeta''(\dist(x)) - \zeta''(\dist_i(x))$ which vanishes for $x\in E_{\epsilon+h}(x_i)$
except in a set $S_\epsilon(x_i)$ with measure
$|S_\epsilon(x_i)|\le C \epsilon^{d+1}$ because
the distance function $\dist \in C^{1,1}$. Consequently, recalling that
$|\nabla \dist(x)| = 1$ and $h\le\epsilon$, we arrive at
\[
\|T_1\|_{L^p(E_{\epsilon+h}(x_i))}
\le C  \epsilon^{\frac{1+d}{p}}.
\]
The remaining
two terms are similar and, employing that $\dist\in C^{1,1}$, yield
\[
\|T_i\|_{L^p(E_{\epsilon+h}(x_i))} \le 
C h \|D^2 \dist\|_{L^p(E_{\epsilon+h}(x_i))}
\le C h \epsilon^{\frac{1}{p}}
\]
for $i=2,3$.
Collecting these estimates, and taking $p\ge d$, we obtain 
\[
\big| I_\epsilon[(b-\psi)-I_h(b-\psi)](x_i) \big|
\le \epsilon^{\frac{1}{p}}\Big(\frac{h}{\epsilon}\Big)^{2-\frac{d}{p}},
\]
which is small relative to $1$ for $\epsilon$ small because $h\le\epsilon$.

\smallskip
2 {\it Laplace operator.}
Let $x_i\notin\omega_\epsilon$. Invoking \eqref{discrete-laplace} and the
fact that $b_h(x_j) \ge b_h(x_i) = -\epsilon^2$ we deduce
\[
\Delta_h b_h(x_i) \int_{\omega_i} \phi_i =
- \int_{\omega_i} \nabla b_h \cdot \nabla \phi_i  =
\sum_j k_{ij} \big( b_h(x_i) - b_h(x_j) \big) \ge 0.
\]
Otherwise, if $x_i\in\omega_\epsilon$, then decompose $\Delta_h b_h(x_i)$
as follows
\[
\Delta_h b_h(x_i) \int_{\omega_i} \phi_i
= \int_{\omega_i} \nabla I_h(b-\psi) \cdot \nabla \phi_i
+ \int_{\omega_i} \nabla I_h\psi \cdot \nabla \phi_i
\]
and examine each term separately. We start with the last term. Since
$\psi$ is convex, we can always substract a linear function and assume
that $\psi(x_j) \ge \psi(x_i)=0$ for all $j$. This correction being
linear does not alter the last term, which becomes
\[
\int_{\dm} \nabla I_h\psi \cdot \nabla \phi_i = \sum_j k_{ij} \, \psi(x_j) \le 0.
\]
It remains to show that the first term is small relative to
$h^d \approx \int_{\omega_i} \phi_i$.
We first resort to the pointwise stability of the Lagrange interpolant
$\|\nabla I_h(b-\psi)\|_{L^\infty(\omega_i)} \le \|\nabla (b-\psi)\|_{L^\infty(\omega_i)}$,
which combined with the vanishing mean property of $\nabla(b-\psi)$ in $\omega_i$
and Poincar\'e inequality
$\|\nabla (b-\psi)\|_{L^\infty(\omega_i)} \le C h^{1-\frac{d}{p}} \|D^2(b-\psi)\|_{L^p(\omega_i)}$
yields
\[  
\big| \langle \nabla I_h(b-\psi), \nabla\phi_i \rangle \big|
\le \|\nabla (b-\psi)\|_{L^\infty(\omega_i)} \|\nabla\phi_i\|_{L^1(\omega_i)}
\le C h^{d-\frac{d}{p}} \|D^2(b-\psi)\|_{L^p(\omega_i)}.
\]
We next observe that
$\|D^2(b-\psi)\|_{L^p(\omega_i)} \le C |S_h(x_i)|^{\frac{1}{p}}$
where $S_h(x_i)$ is the subset of $\omega_i$ where
$D^2(b-\psi) \ne 0$. Since $\dist\in C^{1,1}$, we have
$|S_h(x_i)| \le C h^{d+1}$ whence
\[
\big| \langle \nabla I_h(b-\psi), \nabla\phi_i \rangle \big|
\le C h^{d+\frac{1}{p}}.
\]
Combining these estimates for $\Delta_h b_h(x_i)$
with $I_\epsilon b_h(x_i) \ge C \chi_{\omega_\epsilon}(x_i)$ 
yields $L_h^\epsilon b_h(x_i) \ge C \chi_{\omega_\epsilon}(x_i)$ and
concludes de proof.
\end{proof}

\subsection{Error equation}\label{subsec:erroreqn}
We now derive an equation for $L_h^\epsilon (u_G - \ue_h)$ assuming
$\dm=\dm_h$.
By definition \eqref{discrete_pde} of $L_h^\epsilon$ and \eqref{consistency},
we can split $L_h^\epsilon u_G (x_i)$ as follows
\begin{align*}
L_h^\epsilon u_G (x_i) \int_{\dm} \phi_i = &\; 
 -\frac{\lambda}{2} \big \langle\gradv u_G , \gradv \phi_i \big\rangle
 + \big \langle \Ie u_G (x_i) , \phi_i\big \rangle
\\
= &\;  \big\langle f + T_1 + T_2 + T_3 + T_4 , \phi_i \big\rangle ,
\end{align*}
where
\begin{align*}
T_1 = &\; \Ie u_G (x_i) - \Ie u(x_i)  ,
\\
T_2 = &\; \Ie u(x_i) - \Big(\bar{A}(x_i) - \frac{\lambda}{2} I \Big) : D^2 u(x_i) ,
\\
T_3 = &\; \Big( \bar{A}(x_i) - \frac{\lambda}{2} I \Big) : \left( D^2 u(x_i) -  D^2 u(x) \right),
\\
T_4 = &\; \left( \bar{A}(x_i) - A(x) \right): D^2 u(x) ,
\end{align*}
and $\bar{A}(x_i)$ is the mean of $A(x)$ over the star $\omega_i$ defined in \eqref{mean}.
Since $L_h^\epsilon \ue_h (x_i) = f_i$, according to \eqref{discrete_pde}, 
we thus get the following expression, for all $x_i \in \Nh$, 
\begin{align}\label{erroreqn}
L_h^\epsilon [u_G - \ue_h] (x_i)
=  \left( \int_{\dm} \phi_i \right)^{-1} \big \langle T_1
+  T_2 + T_3  + T_4 , \phi_i\big \rangle.
\end{align}

\subsection{Operator consistency and convergence}\label{subsec:estimateTi}
We now derive an upper bound for the four error terms $T_i$ in \eqref{erroreqn}; 
a lower bound follows along the same lines.
To this end, we must account for the behavior in the
boundary layer $\omega_\epsilon=\dm\setminus\dm_\epsilon$, where the definition of
the second difference \eqref{extension} changes and the operator accuracy reduces to order $1$. This leads to  convergence for $C^2$-solutions.
\begin{lemma}[estimate of $T_1$]\label{T1}
Let $u \in W^2_{\infty} (\dm)$ and $b_h \in \Vhzero$ be the discrete
boundary layer function defined in \eqref{discrete-dlf}. Then there
is a constant $C = C(\dm, \sigma)$ such that
\begin{align*}
  \Ie [ u_G - u] (x_i)
  \leq C |u|_{W^2_{\infty}(\dm)} |\ln h|
  \left(\frac{h^2}{\epsilon^2} + L_h^\epsilon b_h(x_i) \right)
\quad
\forall x_i \in \Nh,
\end{align*}
\end{lemma}
\begin{proof}
Applying the $L^{\infty}$ estimate \eqref{Galerkininftyestimate} of
$u_G-u$ yields
\begin{align*}
  \delta [ u_G  - u ] (x_i, y) \leq C  |u|_{W^2_{\infty}(\dm)}
  \, h^2 |\ln h|
  \begin{cases}
    1 \quad & \text{ if $x_i\in\Omega_\epsilon$}
    \\
    \theta^{-2} & \text{ if $x_i\in\omega_\epsilon$}
  \end{cases}  
\end{align*}
whence
\[
 \Ie [u_G - u] (x_i)
 \leq C   |u|_{W^2_{\infty}(\dm)} \left( \frac{h^2}{\epsilon^2} |\ln h|
 + \frac{h^2}{\theta^2\epsilon^2} |\ln h| \chi_{\omega_\epsilon}(x_i)\right).
\]
Since every node $x_i\in\omega_\epsilon$ is at most at distance $Ch$ to
$\partial\Omega$, i.e. $\theta\epsilon\ge Ch$, we see
that the truncation error within the layer $\omega_\epsilon$ may be of
order $|\ln h|$. We thus invoke \eqref{properties-dlf}
to replace the second term by $|\ln h| \, L_h^\epsilon
b_h(x_i)$, as asserted.
\end{proof}

To estimate the term $T_4$ in \eqref{erroreqn}, we recall the assumption \eqref{Ldassumption}: if 
$
\bar{A}(x_i) = \frac{1}{\abs{\omega_i}} \int_{\omega_i} A(x) dx
$
is the mean of $A(x)$ in the star $\omega_i$ of node $x_i$, then
\begin{align*}
\left( \sum_{x_i \in \Nh} \int_{\omega_i} \Abs{ A(x) - \bar{A}(x_i)}^d
\, dx \right)^{1/d} \leq C(A) h^{\beta}
\end{align*}
for some $0<\beta\leq 1$. Note that if $A \in W^1_d(\dm)$, then this
estimate with $\beta = 1$ follows immediately from the Poincar\'{e}
inequality.
It is weaker than $A\in L^{\gamma,d} (\dm)$, the
Campanato space with index $\gamma = d+ \beta d$  and Lebesgue
integrability $d$; embedding
theory implies $A \in C^{0,\beta}(\overline{\dm})$ \cite[Theorem 4.6.1]{Kufner},
which is consistent with Schauder theory.
We also introduce the notation 
\[
S_i = \left( \int_{\dm} \phi_i \right)^{-1} \big\langle T_4, \phi_i\big\rangle
\qquad
\text{ for all $i = 1, 2, \cdots, N$, }
\]
where $N$ is the cardinality of $\Nh$. 
Now we are ready to estimate each term $T_i$. 

\begin{lemma}[estimate of error equation]\label{lemma:estimate} 
Let the mesh $\Th$ satisfy \eqref{weaklyacute}. 
If the solution $u\in C^{2,\alpha}(\overline{\dm})$
with $0 < \alpha \leq 1$, then 
\begin{align*}
 L_h^\epsilon [ u_G - \ue_h - C |\ln h|\, b_h](x_i ) 
 \leq C_{\alpha}(u) \left( \epsilon^{\alpha} + h^{\alpha}
 +  \frac{h^2} { \epsilon^2} |\ln h| \right) + S_i .
\end{align*}
where $C_{\alpha}(u) = C \left( |u|_{C^{2,\alpha}(\overline{\dm})} + |u|_{W^{2}_{\infty}(\overline{\dm})} \right)$.
If the solution $u\in C^{3,\alpha}(\overline{\dm})$,  then 
\begin{align*}
L_h^\epsilon [ u_G - \ue_h - C |\ln h|\, b_h](x_i ) 
 \leq 
\left \{
\begin{array}{ll}
C_{1+\alpha}(u)  E_1  + S_i & \text{ for $x_i \in \dme$, }
\\
C_{1+\alpha}(u) E_2  + S_i & \text{ for $x_i \in \omega_\epsilon$. }
\end{array}
\right.
\end{align*}
where 
\begin{align*}
  E_1 = \epsilon^{1+\alpha} + h + \frac{h^2}{\epsilon^2} |\ln h| ,
\quad \quad
  E_2 =  \epsilon + h + \frac{h^2}{\epsilon^2} |\ln h|  
\end{align*}
and the constant $C_{1+\alpha}(u)= C \left( |u|_{C^{3,\alpha}(\overline{\dm})} + |u|_{W^2_{\infty}(\dm)}\right)$.
Moreover, if  \eqref{Ldassumption} is valid, then
\begin{align}
\label{S}
\left( \sum_{i=1}^N \abs{ S_i }^d \abs{\omega_i} \right)^{1/d} \leq C(A, u) \; h^{\beta}
\end{align}
with a constant $C(A, u) = C(A) |u|_{W^{2}_{\infty}(\dm)}$.
\end{lemma}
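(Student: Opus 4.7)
The strategy is to invoke the error equation \eqref{erroreqn} and estimate each of the four consistency-error terms $T_1, T_2, T_3, T_4$ separately, then sum the contributions.

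First I would dispose of $T_1$ by a direct appeal to Lemma \ref{barrier-1}: since $T_1$ is a nodal quantity, its $\phi_i$-weighted average equals $T_1(x_i)$ itself, so
\[
|T_1| \leq C \norm{u}_{W^2_\infty(\dm)} \Bigl(\tfrac{h^2}{\epsilon^2}\ln\tfrac{1}{h} + L_h V_h(x_i)\Bigr).
\]
The $C L_h V_h$ piece is precisely what is absorbed by the left-hand side of the statement, leaving only $\tfrac{h^2}{\epsilon^2}\ln\tfrac{1}{h}$. Next, since the FEM kernel at $x_i$ uses $M(x_i)^2 = \bar A(x_i) - \tfrac{\lambda}{2}I$, Lemma \ref{approximation} with $\bar A$ in place of $A$ is directly applicable to $T_2 = \Ie u(x_i) - (\bar A(x_i) - \tfrac{\lambda}{2}I):D^2 u(x_i)$. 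Case (b) yields $|T_2|\leq C \norm{u}_{C^{2,\alpha}(\overline{\dm})}\epsilon^\alpha$ under $C^{2,\alpha}$ regularity, uniformly in $x_i$. Under $C^{3,\alpha}$ regularity I would split: if $x_i \in \dme$ case (c) with $k=1$ delivers the sharper rate $\epsilon^{1+\alpha}$; for $x_i \notin \dme$ the boundary-modified second difference \eqref{extension} is only second-order accurate, so I must fall back to case (b) with Hölder exponent $1$ (legitimate since $C^{3,\alpha}\hookrightarrow C^{2,1}$), obtaining $|T_2|\leq C \norm{u}_{C^{3,\alpha}(\overline{\dm})}\epsilon$. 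This dichotomy is exactly the one between $E_1$ and $E_2$.

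For $T_3 = (\bar A(x_i) - \tfrac{\lambda}{2}I):(D^2 u(x_i) - D^2 u(x))$, quasi-uniformity of $\Th$ gives $|x - x_i|\leq Ch$ for $x \in \omega_i$, and $|\bar A(x_i) - \tfrac{\lambda}{2}I|\leq \Lambda$ by \eqref{elliptic_condition}; the modulus of continuity of $D^2 u$ then gives $|T_3(x)|\leq C\norm{u}_{C^{2,\alpha}}h^\alpha$ in the $C^{2,\alpha}$ case and $|T_3(x)|\leq C\norm{u}_{C^3}h$ in the $C^{3,\alpha}$ case, and the $\phi_i$-average inherits the same bound since $\phi_i\geq 0$. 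Adding the estimates of $T_1, T_2, T_3$ produces the claimed pointwise inequality, with $|S_i|$ isolated. For the $\ell^d$ bound \eqref{S} on $S_i$, I would apply Jensen's inequality with respect to the probability measure $\phi_i\,dx/\int_{\dm}\phi_i$:
\[
|S_i|^d \leq C \norm{u}^d_{W^2_\infty(\dm)} \frac{1}{\abs{\omega_i}} \int_{\omega_i} \abs{A(x)-\bar A(x_i)}^d\,dx,
\]
then multiply by $\abs{\omega_i}$, sum over all nodes, and invoke hypothesis \eqref{Ldassumption} to conclude.

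The main obstacle will be the sharp treatment of $T_2$ near $\bdry$ in the $C^{3,\alpha}$ case: the second-order extension \eqref{extension} is tight, so the extra factor of $\epsilon$ delivered by Lemma \ref{approximation}(c) is lost whenever $\mathrm{dist}(x_i,\bdry)\leq Q\epsilon$; this is precisely why $E_2$ retains the slower term $\epsilon$ rather than $\epsilon^{1+\alpha}$. The barrier $V_h$ introduced in the first step plays an analogous role, absorbing a boundary artefact inside $\Ie u_G$ that cannot be controlled by the interior $L^\infty$-estimate \eqref{Galerkininftyestimate} alone.
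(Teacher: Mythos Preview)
Your proposal is correct and follows essentially the same approach as the paper: bound $T_1$ via Lemma \ref{barrier-1}, $T_2$ via Lemma \ref{approximation} (with the interior/boundary dichotomy in the $C^{3,\alpha}$ case), $T_3$ via the H\"older continuity of $D^2u$ on $\omega_i$, and then handle $S_i$ by a convexity argument. The only cosmetic difference is that you invoke Jensen's inequality for the $\ell^d$ bound on $S_i$ whereas the paper uses H\"older's inequality together with $\phi_i\le 1$; the two arguments are equivalent here and yield the same estimate after multiplying by $|\omega_i|$ and summing.
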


\begin{proof}
The upper bound of $T_1$ follows from Lemma \ref{T1} (estimate of $T_1$)
\[
T_1 \leq C  |u|_{W^{2}_{\infty}(\dm)} \, |\ln h|
\left( \frac{h^2} {\epsilon^2}  +  L_h^\epsilon b_h(x_i) \right).
\]  
The estimate of $T_2$ is a consequence of Lemma \ref{approximation} (approximation property of $\Ie$)
\begin{align*}
\abs {T_2}  
\leq 
C |u|_{C^{2,\alpha}(\overline{\dm})} \epsilon^{\alpha} .
\end{align*}
The estimate for $T_3$, for $u \in C^{2, \alpha}(\overline{\dm})$, reads
\[
  \abs{T_3} \leq C  |u|_{C^{2, \alpha} (\overline{\dm})} h^{\alpha}.
\]
Therefore, we conclude from the error equation \eqref{erroreqn} that
\begin{align*}
L_h^\epsilon [u_G  - \ue_h] (x_i)  \leq C |u|_{C^{2,
    \alpha}(\overline{\dm})} \left( \epsilon^{\alpha} + h^{\alpha} +
|\ln h| \frac{h^2}{\epsilon^2} 
+ |\ln h| \, L_h^\epsilon b_h(x_i)  \right) + S_i
\end{align*}
for all $x_i \in \Nh$, which proves the first estimate.

For $u \in C^{3,\alpha}(\overline{\dm})$, we only need to note that, by Lemma \ref{approximation}, 
\begin{align*}
\abs{T_2} \leq \left \{
\begin{array}{ll}
C \epsilon^{1 + \alpha} |u|_{C^{3, \alpha} (\overline{\dm})} &\qquad \text{ for $x \in \dme$, }
\\
C \epsilon |u|_{C^{2, 1} (\overline\dm)} &\qquad \text{ for $x \in \omega_\epsilon$, }
\end{array}
\right.
\end{align*}
and $\abs{T_3} \leq C h |u|_{C^{2, 1} (\overline\dm)} $.
We thus have from the error equation \eqref{erroreqn} that
\begin{align*}
L_h^\epsilon \big[u_G - \ue_h -C |\ln h| b_h \big] (x_i) \leq
C_{1+\alpha}(u)
\begin{cases}
E_1  &\quad \text{ for $x \in \dme$}, 
\\
E_2  &\quad \text{ for $x \in \omega_\epsilon$}.
\end{cases}
\end{align*}

Finally, to prove the last statement, we only need to note that by definition
\[
\abs{S_i} = \left( \int_{\omega_i} \phi_i \right)^{-1} \left|
\int_{\omega_i} \left( A(x) - \bar{A}(x_i) \right) : D^2 u(x) \;
\phi_i(x) \, dx \right|.
\] 
Since $D^2u(x)$ is bounded, invoking H\"{o}lder's inequality, we obtain
\begin{align*}
\abs{S_i}^d 
\leq &\; 
|u|_{W^{2}_{\infty}(\dm)} ^d \left( \int_{\omega_i} \phi_i
\right)^{-d} \left( \int_{\omega_i} \abs{ A(x) - \bar{A}(x_i)}^d \, dx
\right) \left( \int_{\omega_i} \phi_i(x)^{\frac{d}{d-1}} \, dx  \right)^{d-1}
\\
\leq &\; 
|u|_{W^{2}_{\infty}(\dm)}^d \left( \int_{\omega_i} \phi_i \right)^{-1}
\left( \int_{\omega_i} \abs{ A(x) - \bar{A}(x_i)}^d \, dx  \right), 
\end{align*}
due to the fact that $\phi_i \leq 1$. Hence,
we infer from assumption \eqref{Ldassumption} that
\begin{align*}
\sum_{i=1}^N \abs{ S_i }^d \abs{\omega_i} 
\leq 
|u|_{W^{2}_{\infty}(\dm)}^d (d+1) \sum_{i=1}^N\int_{\omega_i}
 \abs{ A(x) - \bar{A}(x_i)}^d \, dx
 \leq C( A)  |u|_{W^{2}_{\infty}(\dm)}^d\; h^{\beta d}.
\end{align*}
This completes the proof.
\end{proof}

\begin{corollary}[convergence for $C^{2}$ solutions]\label{convergence-C2}
Let the two scales $h$ and $\epsilon$ satisfy
$\epsilon = C_1 h \abs{\ln h}$ for any constant $C_1>0$. 
If the solution $u \in C^{2}(\overline{\dm})$, the coefficient
matrix $A\in$ VMO\,$(\Omega)$, and the mesh $\Th$ satisfies
  \eqref{localweaklyacute}, then
\begin{align*}
  \lim_{h \to 0} \|\ue_h - u\|_{L^\infty(\Omega)} = 0.
\end{align*}
\end{corollary}
\begin{proof}
Let $E_h(x_i):=L_h^\epsilon \big[ u_G - \ue_h - C |\ln h| \, b_h \big] (x_i)$.
If
\begin{equation}\label{oper-consist}
\limsup_{h \to 0} E_h(x_i) \leq 0 
\end{equation}
is valid uniformly in $x_i$ as $h, \epsilon \to 0$, then applying
Theorem \ref{discrete_ABP} (discrete ABP estimate) to
the function $u_G - \ue_h - C |\ln h| \, b_h$ with vanishing trace
on $\partial\Omega$ yields 
\[
  \sup_{\dm} \big( u_G - \ue_h - C |\ln h| \, b_h \big)^- \leq C
  \left( \sum_{x_i \in \mathcal{C}_h^-(v_h)} \abs {E_h(x_i)^+}^d
  \abs {\omega_i} \right)^{1/d} \to 0
  \quad \text{as } h\to0.
\]
Realizing that $|\ln h| \, \|b_h\|_{L^\infty(\Omega)} = \epsilon^2 |\ln h|\to 0$
as $h\to 0$, we obtain
\[
\lim_{h\to0}\|(u_G - \ue_h )^-\|_{L^\infty(\Omega)} = 0.
\]
Since a similar result is valid for $(u_G-\ue_h)^+$ the assertion follows.

It thus remains to show \eqref{oper-consist}. In view of
\eqref{erroreqn}, we just estimate each term $T_i$ for $1\le i\le 4$.
Lemma \ref{T1} (estimate of $T_1$) yields
\[
  T_1 \leq C |u|_{W^{2}_{\infty}(\dm)} |\ln h|
\left( \frac{h^2} {\epsilon^2}  +  L_h^\epsilon b_h(x_i) \right).
\]  
The estimate of $T_2$ is a consequence of Lemma \ref{approximation}
(approximation property of $\Ie$)
\begin{align*}
\abs {T_2}  \to 0 \qquad
\text{as $\epsilon \to 0$}
\end{align*}
because $u \in C^2 (\overline{\dm})$.
The latter also implies
$
  \abs{T_3} \to 0 
$
as $h \to 0$. Finally, we deduce  
\begin{align*}
  \langle T_4, \phi_i  \rangle
  = \int_{\dm} ( \bar{A}(x_i) - A(x) ) : D^2 u(x) \phi_i(x) \, dx
  \le |u|_{W^2_\infty(\dm)} \int_{\omega_i}  | \bar{A}(x_i) - A(x) | \, dx
\end{align*}  
whence
\begin{align*}
    \frac{ \langle T_4, \phi_i  \rangle}{\int_{\dm} \phi_i}  
    \leq 
    C |u|_{W^2_\infty(\dm)} \frac{1}{|\omega_i|} \int_{\omega_i}  | \bar{A}(x_i) - A(x) |\, dx 
    \le C |u|_{W^2_\infty(\dm)} \eta(h),
\end{align*}
where $\eta$ is the modulus of continuity for 
$A \in$ VMO\,$(\Omega)$ defined in \eqref{vmo}. This is not obvious
because $\bar{A}(x_i)$, defined in \eqref{mean},
is the meanvalue of $A$ over the star $\omega_i$
instead of balls of radius $\le h$. To prove such a statement, note that
\[
\bar{A}(x_i) = A_h(x_i) + \frac{1}{|\omega_i|} \int_{\omega_i}
\big(A(x) - A_h(x_i)\big) \, dx
\]
yields
\[
\frac{1}{|\omega_i|} \int_{\omega_i}  | \bar{A}(x_i) - A(x) | \, dx \le
C \frac{1}{|B_h(x_i)\cap\dm|} \int_{B_h(x_i)\cap\dm} \big| A(x) -
A_h(x_i)\big| \, dx
\le C \eta(h),
\]
where $C>0$ depends on the shape regularity of $\Th$.
We thus conclude \eqref{oper-consist} uniformly in $x_i$ as
$h,\epsilon\to0$ because $\eta(h)\to0$.
\end{proof}

\subsection{Discrete barrier functions}\label{subsec:barrierfunction}
We note that while the estimate in the interior of $\dm$ is
rather straightforward the boundary estimate is more involved,
due to the reduced rate of $E_2$ in the $\epsilon$-region
$\omega_\epsilon$ close to $\bdry$ in Lemma \ref{lemma:estimate}.
We assume that $\dm$ satisfies the {\it exterior ball property}
\cite{GilbargTrudinger01}, namely at every point $z_0\in\partial\dm$
there is a ball $B_R(y)$ of radius $R>0$ lying outside $\dm$ and tangent to
$\partial\dm$ at $z_0$; this is consistent with $\partial\dm$ being of
class $C^{1,1}$. We consider the following barrier function
\cite[p.106]{GilbargTrudinger01}
\begin{equation}\label{barrier}
\psi (x) :=
  \begin{cases}
    \tau\big( |x-y|^{-\sigma} - R^{-\sigma} \big) \quad & d\ge 3
    \\
    \tau\big( |\ln |x-y||^{-\sigma} - |\ln R|^{-\sigma} \big) \quad & d= 2,
  \end{cases}  
\end{equation}
with $\tau, \sigma>0$.
It turns out that $\psi (x) \le 0$ for all $x\in\dm$, $\psi (z_0)=0$ and
for $d\ge 3$
\begin{equation}\label{Hessian-b}
D^2 \psi(x) = \frac{\tau\sigma}{|x-y|^{\sigma+2}} \Big(
(\sigma+2) \frac{x-y}{|x-y|} \otimes \frac{x-y}{|x-y|} - I \Big),
\end{equation}
whence for $\sigma, \tau$ sufficiently large depending on $\lambda,
\Lambda$ and $R$
\begin{equation}\label{prop-barrier}
A(x) : D^2 \psi (x) \ge \frac{\tau\sigma}{|x-y|^{\sigma+2}}
\Big( (\sigma+2) \lambda - \textrm{tr} (A) \Big) \ge 2
\quad \forall \, x\in\dm.
\end{equation}
The same properties hold for $d=2$; we omit details.
\begin{lemma}[discrete barrier]\label{barrier-2}
Let $\dm$ be of class $C^{1,1}$.
Given a constant $E > 0$, for each node $z \in \Nh$ with ${\rm dist} (z , \bdry) \leq Q \epsilon$, there exists a function $p_z \in \Vh$ such that 
$
L_h^\epsilon p_z (x_i) \geq E  
$
for all $x_i \in \Nh$, $p_z \leq 0$ on $\bdry$ and 
\[
\abs{ p_z (z) } \leq C E \epsilon ,
\]
provided $h, \epsilon$ are sufficiently small and satisfy
$C h |\ln h|^2 \le \epsilon |\ln h| \le 1$.
\end{lemma}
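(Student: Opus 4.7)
The plan is to transport a suitable continuous quadratic barrier to $\Vh$ via a Galerkin-type projection. First, convexity of $\dm$ lets us pick, for the given $z\in\Nh$ with $\text{dist}(z,\bdry)\le Q\epsilon$, a closest boundary point $z_0\in\bdry$ with outward unit normal $n$; then $t(x):=-n\cdot(x-z_0)\ge 0$ on $\overline\dm$ with $t(z)=\text{dist}(z,\bdry)\le Q\epsilon$. I would take the continuous barrier
\[
p(x)\ :=\ \kappa E\,\bigl[\alpha\,t(x)^2 - t(x)\bigr],\qquad \alpha := 1/\text{diam}(\dm),
\]
with $\kappa$ to be fixed. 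Since $t\le 1/\alpha$ on $\overline\dm$, the bracket is nonpositive, so $p\le 0$ in $\overline\dm$ and in particular on $\bdry$; and $|p(z)|\le \kappa E\,t(z)(1+\alpha t(z))\le C\kappa E\epsilon$. Because $p$ is quadratic with $D^2 p = 2\alpha\kappa E\,(n\otimes n)$ and $\laplace p = 2\alpha\kappa E$, the exact identity \eqref{quadratics} together with ellipticity $n^{T} A(x)\,n\ge\lambda$ gives $\Le p(x)\ge 2\alpha\kappa E\lambda$ pointwise in $\dm$. Choosing $\kappa$ so that $2\alpha\kappa\lambda\ge 2$ therefore yields the continuous estimate $\Le p\ge 2E$ with a factor-of-two headroom.

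Next, I would define $p_z\in\Vh$ to be the non-homogeneous Galerkin projection of $p$: $p_z(x_i)=p(x_i)$ at every boundary node and $(\gradv p_z,\gradv v_h)_\dm = (\gradv p,\gradv v_h)_\dm$ for all $v_h\in\Vhzero$, which I would write as $p_z = I_h p + \hat p_z$ with $\hat p_z$ the Galerkin projection of $p - I_h p$ in $\Vhzero$. Integration by parts, exactly as in the derivation of \eqref{consistency}, gives $\laplace_h p_z(x_i)=\bigl(\int \phi_i\bigr)^{-1}\int\phi_i\,\laplace p = 2\alpha\kappa E$ at every interior node. I would then split $\Ie p_z(x_i) = \Ie p(x_i) + \Ie(p_z-p)(x_i)$: the first summand equals $(\bar A(x_i)-\tfrac{\lambda}{2}I):D^2 p(x_i)\ge \alpha\kappa E\lambda$ because \eqref{quadratics} stays true with the modified second difference \eqref{extension}, even near $\bdry$. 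For the second summand, the Schatz--Wahlbin estimate \eqref{Galerkininftyestimate} applied to $\hat p_z$ plus the interpolation bound $\|I_h p - p\|_\infty\le Ch^2\|p\|_{W^2_\infty}$ give $\|p_z-p\|_\infty\le C E h^2\ln(1/h)$, and combining with $\int K_\epsilon(x,\cdot)\,dy\le C/\epsilon^2$ yields $|\Ie(p_z-p)(x_i)|\le C E\,h^2\epsilon^{-2}\ln(1/h)$, exactly the remainder appearing in Lemma \ref{barrier-1}. Under the hypothesis $\epsilon\ge Ch\ln(1/h)$ this remainder is $o(E)$, so the headroom from the continuous step gives $L_h p_z(x_i)\ge E$ for every $x_i\in\Nh$ once $h$ is sufficiently small.

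The bound $|p_z(z)|\le CE\epsilon$ then follows from $|p(z)|\le CE\epsilon$ and $\|p_z-p\|_\infty\le CEh^2\ln(1/h)\ll E\epsilon$ by the triangle inequality. The step I expect to be most delicate, as already encountered in Lemma \ref{barrier-1}, is the control of $\delta(p_z-p)(x_i,y)$ when a translate $x_i\pm y$ leaves $\dm$: the extension \eqref{extension} produces $1/(\theta_1\theta_2)$ factors that threaten to destroy uniform bounds. The crucial point is that $p_z$ agrees with $p$ at every boundary node by construction, so the boundary mismatch is only $O(h^2)$, exactly as $u-u_G$ in Lemma \ref{barrier-1}; should any residual remain, I would add a small multiple $c V_h$ of the function from that lemma, which satisfies $L_h (c V_h)\ge 0$ and whose $L^\infty$ norm $\le c h^2\ln(1/h)\ll\epsilon$ does not spoil the estimate $|p_z(z)|\le CE\epsilon$. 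This boundary-extension bookkeeping, rather than the main quadratic computation, is the true crux of the argument.
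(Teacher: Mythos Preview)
Your proposal is correct and follows essentially the same approach as the paper: the paper uses the quadratic $p(x)=\tfrac{E}{\lambda}\,x_d(x_d-R)$ in coordinates aligned with the boundary normal at the nearest point $z_0$ (your barrier with $t=x_d$, $\alpha=1/R$, $\kappa=R/\lambda$), takes its non-homogeneous Galerkin projection $p_G$, and defines $p_z:=p_G+2C\lambda^{-1}E\,V_h$, invoking Lemma~\ref{barrier-1} directly to obtain $|\Ie p_G(x_i)-\Ie p(x_i)|\le CE\bigl(h^2\epsilon^{-2}\ln\tfrac1h+L_hV_h(x_i)\bigr)$. Your intermediate claim $|\Ie(p_z-p)(x_i)|\le CEh^2\epsilon^{-2}\ln(1/h)$ via $\|p_z-p\|_\infty\cdot\int K_\epsilon$ is valid only for $x_i\in\dme$---it fails near $\bdry$ precisely because of the $1/(\theta_1\theta_2)$ factor you flag---so the $V_h$ correction you append at the end is essential rather than optional, and the paper builds it in from the outset.
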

\begin{proof}
Let $z_0 \in \bdry$ be such that $|z - z_0| = {\rm dist}(z, \bdry)$,
and let $p := E \psi$
where $\psi$ is the barrier function defined in \eqref{barrier}.
Let $p_G\in\Vh$ be the Galerkin projection of $p$ which interpolates $p$ on $\bdry$. 
 According to \eqref{Galerkininftyestimate} and \eqref{Hessian-b}, we get
\[
\inftynorm{ p - p_G } \leq C  E  h^2  |\ln h|,
\]
where $C>0$ depends on $R, \tau$ and $\sigma$. Lemma \ref{T1} (estimate of $T_1$) yields
\[
\Abs{ \Ie p_G (x_i) - \Ie p(x_i) } \leq C  E \, |\ln h|
\left(  \frac{h^2} { \epsilon^2} + L_h^\epsilon b_h(x_i) \right).
\]
Thanks to the operator consistency \eqref{consistency} of the Galerkin
projection, we obtain
\[
\Abs{ L_h^\epsilon p_G (x_i) - \Le p(x_i) } \leq C E \, |\ln h|
\left(  \frac{h^2} { \epsilon^2} + L_h^\epsilon b_h(x_i) \right).
\]
where $\Le$ is defined in \eqref{pde_approx}. Hence,
\begin{align*}
L_h^\epsilon \big[ p_G + C  E \, |\ln h| \, b_h \big] (x_i)
\geq &\; \Le p (x_i) - C E \, |\ln h| \frac{h^2} { \epsilon^2}.
\end{align*}
In view of \eqref{prop-barrier} and Lemma \ref{approximation}(3) (approximation property of $I_{\epsilon}$),
we obtain $\Le p (x_i) \ge 2E - C E \epsilon^2$,
where $C>0$ is proportional to $|\psi|_{C^{3,1}(\overline{\dm})}$.
Setting $p_z := p_G + C E \, |\ln h| b_h$, this implies 
\begin{align*}
L_h^\epsilon p_z (x_i)
\geq  2 E - C E \Big(\epsilon^2 + |\ln h| \frac{h^2} { \epsilon^2} \Big)    
\geq  E
\end{align*} 
because  $C\epsilon^2 + C |\ln h| h^2/\epsilon^2 \leq 1$ 
for $h, \epsilon$ sufficiently small. Moreover
\begin{align*}
\abs { p_z (z) }
\leq &\; 
\Abs{p(z)} + \Abs{p(z) - p_G(z)} + C E |\ln h| \Abs{b_h(z)}
\\
\leq &\;
C E \epsilon + CE h^2 |\ln h| + CE \, |\ln h| \epsilon^2 \le
CE \epsilon
\end{align*}
because $Ch|\ln h|^2 \le \epsilon |\ln h| \le 1$. This concludes the
proof.
\end{proof}

\subsection{Convergence rates for classical solutions}\label{subsec:rates}
We recall that if
$A\in \textrm{VMO}(\dm)$ and $f\in L^\infty(\dm)$, then there is a
unique strong solution $u$ satisfying \eqref{CZ} for all $1<p<\infty$
\cite{ChiarenzaFrascaLongo93}.
On the other hand, if $A,f \in C^{0,\alpha}(\overline{\Omega})$
and $\partial\dm \in C^{2,\alpha}$, then
there exists a unique classical solution $u\in C^{2,\alpha}(\overline{\Omega})$
satisfying \eqref{schauder} \cite{GilbargTrudinger01}.
Below we establish two convergence rates for $\inftynorm{u - \ue_h}$
which assume both the existence of $u\in C^{2,\alpha}(\overline{\Omega})$
and $A$ having minimal regularity
compatible with $A\in C^{0,\alpha}(\overline{\Omega})$.
\begin{corollary}[convergence rate for $C^{2, \alpha}$
  solutions]\label{convergencerate-C2Holder}
  Let the two scales $h$ and $\epsilon$ satisfy 
  $\epsilon = C_1\left( h^2 \abs{\ln h} \right)^{1/(2 + \alpha)}
  $ for an arbitrary constant $C_1>0$ and $0<\alpha\le1$.
  If the solution $u$ of \eqref{pde} belongs to
  $C^{2,\alpha}(\overline{\dm})$, the coefficient matrix $A$ satisfies
  \eqref{Ldassumption} for $\frac{2 \alpha}{2 + \alpha}
  \leq \beta \leq \alpha$, and the mesh $\Th$
  satisfies \eqref{localweaklyacute}, then
\begin{align*}
\inftynorm {u - \ue_h} \leq C \big( h^2 |\ln h| \big)^{\frac{\alpha}{2 + \alpha}}
\left( |u|_{C^{2, \alpha}(\overline{\dm})} + |u|_{W^2_{\infty}(\dm)} \right),
\end{align*}
where the constant $C$ is proportional to the constant $C(\sigma, \dm,
d, \lambda)$ in Theorem \ref{discrete_ABP} (discrete ABP estimate), the constant $C(A)$
in \eqref{Ldassumption} and $C_1$. 
\end{corollary}

\begin{proof}
Lemma \ref{lemma:estimate} (estimate of error equation) gives
\begin{align*}
L_h^\epsilon \big[u_G - \ue_h  - C |\ln h| \, b_h\big] (x_i )
\leq C \left( \epsilon^{\alpha} + h^{\alpha} + \frac{h^2} {\epsilon^2}
|\ln h| \right)  + S_i.
\end{align*}
Invoking  Theorem \ref{discrete_ABP} (discrete ABP estimate), along with \eqref{S} and $\beta \leq \alpha$, we get
\begin{align*}
  \sup_{\dm} \big(u_G - \ue_h - C |\ln h| \, b_h \big)^- \leq C \left(
  \epsilon^{\alpha} + h^{\beta} +  \frac{h^2} {\epsilon^2} |\ln h| \right).
\end{align*}
Since $\abs {b_h(x)} \leq \epsilon^2$, we obtain
\begin{align*}
  \sup_{\dm} \big(u_G - \ue_h\big)^{-} \leq C \left( \epsilon^{\alpha}
  + h^{\beta} +  \frac{h^2} {\epsilon^2} |\ln h|  + \epsilon^2  |\ln h| \right),
\end{align*}
together with a similar bound for $(u_G - \ue_h)^+$. Since $\epsilon, \alpha \leq 1$,  
we get
\begin{align*}
\inftynorm { u_G - \ue_h } \leq C \left( \epsilon^{\alpha} + h^{\beta}
+  \frac{h^2} {\epsilon^2} |\ln h| \right),
\end{align*}
and combine it with \eqref{Galerkininftyestimate} to arrive at 
\begin{align*}
\inftynorm {u - \ue_h} 
\leq
\inftynorm {u - u_G} + \inftynorm {u_G - \ue_h}
\leq 
C \left( \epsilon^{\alpha} + h^{\beta} +  \frac{h^2} {\epsilon^2} |\ln{h}|  \right).
\end{align*}
We finally set $\epsilon^{\alpha} = C\frac{h^2}{\epsilon^2}|\ln h|$
with $C>0$ arbitrary, that is $\epsilon = C_1\left( h^2 |\ln h|
\right)^{1/(2 + \alpha)} $, and use the assumption that $\beta \geq
2\alpha / (2 + \alpha)$ to infer the asserted estimate.
\end{proof}

We now examine the rate of convergence for a solution $u \in
C^{3,\alpha}(\overline{\dm})$. It is worth stressing that for $\alpha =1$, we
obtain an almost linear rate $\inftynorm{u - \ue_h} \leq C h |\ln h|$.
\begin{corollary}[convergence rate for $C^{3, \alpha}$
  solutions]\label{convergencerate-C3Holder}
Let the two scales $h$ and $\epsilon$ satisfy 
$\epsilon = C_2 h^{2/(3+\alpha)}$
for an arbitrary constant $C_2>0$ and $0<\alpha\le1$.
If the solution $u$ of \eqref{pde} belongs to
$C^{3,\alpha}(\overline{\dm})$, the coefficient matrix $A$ satisfies
\eqref{Ldassumption} for $ \frac{2 + 2 \alpha} {3 +
  \alpha} \leq \beta \leq 1 $, and the mesh $\Th$
  satisfies \eqref{localweaklyacute}, then 
\begin{align*}
\inftynorm {u - \ue_h} \leq C h^{2(1+\alpha)/(3+\alpha)} |\ln h| \,
\left( |u|_{C^{3, \alpha}(\overline{\dm})} +
|u|_{W^2_{\infty}(\dm)}\right) ,
\end{align*}
where the constant $C$ is proportional to the constant $C(\sigma, \dm,
d, \lambda)$ in Theorem \ref{discrete_ABP} (discrete ABP estimate), 
the constant $C(A)$ in \eqref{Ldassumption} and $C_2$.
\end{corollary}

\begin{proof}
We start with the estimate in Lemma \ref{lemma:estimate}
\begin{align*}
\Abs{L_h^\epsilon [ u_G - \ue_h - C |\ln h| \, b_h] (x_i )} \leq 
\left \{
\begin{array}{ll}
C E_1 + S_i &\quad \text{ for $x_i \in \dme$, }
\\
C E_2 + S_i &\quad \text{ for $x_i \in \omega_\epsilon$. }
\end{array}
\right.
\end{align*}
and carry out the proof in two steps, according to the distance of $x_i$ to $\bdry$.

\vskip0.2cm
1 (boundary behavior).
Our first goal is to show that
\[
 \big( u_G - \ue_h - C |\ln h| \, b_h \big)^{-}(z) \leq C E_2 \epsilon + C h^{\beta}  \qquad
 \text{for all nodes } z \in \omega_\epsilon.
\]
For each $z\in \omega_\epsilon$, let $p_z \in \Vh$ be the barrier function in Lemma \ref{barrier-2} with $E = C  E_2$: 
\begin{align*}
L_h^\epsilon p_z(x_i) \geq  C  E_2  \quad \text{  $\forall x_i \in \Nh$ } 
\qquad \text{and} \qquad
p_z(x) \leq  0 \quad \text{ on $\bdry$.} 
\end{align*}
Set $v_h :=  u_G - \ue_h - C|\ln h| \, b_h - p_z$, and use that $E_2\geq E_1$ to deduce
\begin{align*}
L_h^\epsilon v_h (x_i) \leq S_i  \quad \text{  $\forall x_i \in \Nh$, }
\qquad \text{and} \qquad
v_h (x) \geq 0 \quad \text{  on $ \bdry$ .}
\end{align*}
Theorem \ref{discrete_ABP} (discrete ABP estimate), coupled with \eqref{S}, yields
\[
  -v_h(z) \leq \sup_{\dm} v_h^- \leq C \left( \sum_{x_i \in \Nh} \abs{S_i^+}^d \abs{\omega_i} \right )^{1/d} \leq C h^{\beta}.
\]
Hence, we infer that
\[
p_z(z) - C h^{\beta} \leq (v_h + p_z) (z) = u_G (z) - \ue_h (z) -
C|\ln h| \, b_h(z),
\]
and the assertion now follows from the estimate on $p_z(z)$ in Lemma \ref{barrier-2}.

\vskip0.2cm
2 (interior behavior).
We consider the discrete domain $\dm_{\epsilon,h} = \cup \{T \in \Th :
T \cap \dme \neq \emptyset \}$ which is slightly larger than
$\dme$. We apply again Theorem \ref{discrete_ABP} 
(discrete ABP estimate) to $u_G - \ue_h - C|\ln h|\, b_h + C E_2 \epsilon + C h^{\beta}$, which is nonnegative on $\partial \dm_{\epsilon, h}$ according to Step 1, to obtain 
\[
\sup_\Omega \big( u_G - \ue_h - C|\ln h| \, b_h \big)^- \leq C E_2 \epsilon + C E_1 + C h^{\beta}
\]
where $CE_2 \epsilon + C h^{\beta}$ accounts for the estimate of the
boundary values established already in Step 1. 
Since $\abs {b_h(x)} \leq \epsilon^2$, we infer that
\[
  \sup_{\dm} \big( u_G - \ue_h \big)^- \leq C E_2 \epsilon + C E_1
  + C \epsilon^2 |\ln h|  + C h^{\beta}.
\]
An estimate for $\sup \big(u_G - \ue_h \big)^+$ can be proved in a similar fashion. 
This leads to
\begin{align*}
  \inftynorm {u_G - \ue_h} \leq C \left( \epsilon^{1+\alpha}
  + \epsilon^2 |\ln h|
  + h^{\beta} + \frac{h^2} {\epsilon^2} |\ln h| \right).
\end{align*}
Set $\epsilon^{1+\alpha} = C h^2/\epsilon^2$
for $C>0$ arbitrary and any $\alpha\le1$,
that is $\epsilon = C_2 h^{2/(3+\alpha)}$, and recall that $\beta \geq (2+ 2 \alpha) /(3+ \alpha)$ to deduce the asserted rate of convergence.
\end{proof}

\begin{remark}[linear rate is sharp]\label{R:linear-rate}
It is worth mentioning that the estimate of Corollary \ref{convergencerate-C3Holder} for $\alpha = \beta =1$ is quasi-optimal.
To see this, we consider $d =1$, $\dm = (-1, 1)$, the solution $u(x) = x^4 + x^2 -2$ and $f(x) = 2 u'' (x)  = 24x^2 + 4$; thus $A(x) = 2$. Let $\Th$ be uniform and $u_G$ be the Galerkin projection of $u$.
Then
\begin{align*}
L_h^\epsilon u_G (x_i) = &\;  
\Big( - \big\langle u_G' , \phi_i' \big\rangle
+ \big \langle \Ie u_G (x_i) , \phi_i \big\rangle \Big)
\left( \int_{\dm} \phi_i \right)^{-1}
=
\frac 12 f_i + \Ie u_G(x_i)
\\
= &\;
 f_i + \frac 12 ( f(x_i) -  f_i )  + \Ie u(x_i) - \frac 12 f(x_i) + \Ie u_G(x_i) - \Ie u(x_i).
\end{align*}
Since $u''(x) = 12 x^2 + 2$ is quadratic, a simple calculation based on Lemma \ref{approximation} (approximation property of $\Ie$) for $\alpha, k =1$ yields
\begin{align*}
 \Ie u(x_i) - \frac 12 f(x_i) \geq \left \{
\begin{array}{ll}
2 \epsilon^2 \quad &\text{ for $x_i \in \Omega_\epsilon$,}
\\ 
0  \quad &\text{ for $x_i \in \omega_\epsilon$.}
\end{array}
\right.
\end{align*}
Since $u_G$ is exactly the Lagrange interpolant $I_h u$ for $d =1$, we
have that $v := u_G - u$ vanishes at $x = x_i$ and
$ v(x) \geq (x- x_i) ( x_{i+1} -x) \geq 0$ for all $x \in [x_i,x_{i+1}]$
because $u''(x) \ge 2$ for $x\in\Omega$.
Using this expression for $v$ we readily get
\begin{align*}
 \Ie v(x_i) \geq \left \{
\begin{array}{ll}
\frac{h^2}{2\epsilon^2} \quad &\text{ for $x_i \in \Omega_\epsilon$,}
\\ 
0  \quad &\text{ for $x_i \in \omega_\epsilon$.}
\end{array}
\right.
\end{align*}
Moreover, using that $f$ is quadratic and the symmetry of the integral below,
we note that
\begin{align*}
  f(x_i) - f_i  = &\; \left( \int_\Omega \phi_i
  \right)^{-1} \int_{-h}^{h} \big( f(x_i) - f(x_i + s) \big) \phi_i(s)
  \, ds 
  \\
= &\; \left( \int_\Omega \phi \right)^{-1} \int_{-h}^{h}  -\frac 12
\delta f(x_i, s) \phi_i(s) \, ds =
\left( \int_\Omega \phi \right)^{-1} \int_{-h}^{h}  -24 s^2 \phi_i(s) \, ds;
\end{align*}
hence $\abs {f(x_i) - f_i} \leq C h^2$.
Therefore, for $\epsilon\ge C h$ we conclude that
\begin{align*}
L_h^\epsilon \big[u_G-\ue_h\big](x_i) \geq &\; 2\epsilon^2
  + \frac{h^2}{2\epsilon^2} - C h^2
  \ge \frac12 \left(\epsilon^2 + \frac{h^2}{\epsilon^2}\right) =: E
\end{align*}
because $L_h^\epsilon \ue_h (x_i) = f_i$.
For $a>0$ to be chosen, 
let $p(x) =   \min\{ 0, a E (x^2 - (1 - \epsilon)^2 )\}$ and $p_G$ be
its Galerkin projection. Since $p(x)=0$ for $1-\epsilon\le x \le 1$,
its interpolant $p_G$ vanishes for all $x_i\in\omega_\epsilon$.
Moreover, $I_\epsilon$ being exact for quadratics implies
\begin{align*}
 L_h^\epsilon p_G(x_i) \leq \left \{
\begin{array}{ll}
 4 a E (1 + \frac{h^2}{2\epsilon^2}) \quad &\text{ for $x_i \in\Omega_\epsilon$,}
\\ 
0  \quad &\text{ for $x_i \in \omega_\epsilon$.}
\end{array}
\right.
\end{align*}
Take the constant $a$ sufficiently small so that $4 a (1 + 
  \frac{h^2}{2\epsilon^2} ) \leq 1$ to get $L_h^\epsilon \big[u_G -
  \ue_h -p_G \big] (x_i) \geq 0$. Since $u_G - \ue_h - p_G = 0$ on $\bdry$,
applying Corollary \ref{dmp} (discrete maximum principle), we then infer that 
$u_G - \ue_h - p_G \le 0$ in $\Omega$, whence
\[
\sup_{\dm} \big(\ue_h - u_G \big)^- \geq \sup_{\dm} p_G^- \geq C \Big(\epsilon^2 
+ \frac{h^2}{\epsilon^2} \Big) \geq C h,
\]
for any choice of $\epsilon$.

This example shows that, even for smooth $u$, $A$ and $f$, we can not expect the optimal rate of convergence to be better than order one. 
\end{remark}

%
\subsection{Convergence rates for piecewise $C^{2,\alpha}$-solutions}
\label{subsec:pwsmooth}
%
We have already mentioned in Section \ref{S:introduction} that there are
fundamental obstructions for the development of a PDE theory for
\eqref{pde} with general discontinuous coefficients.
In the absence of general supporting theory, we dwell now on a
practically significant case of discontinuous coefficients $A$ across a
$(d-1)$-dimensional manifold $\Sigma$; we refer to \cite{Kim07,Lorenzi72}
for partial existence and uniqueness results of strong solutions.
We assume that the domain $\dm$ splits into a finite union
of disjoint Lipschitz subdomains $\dm_j$
\begin{equation*}
  \overline{\dm} = \cup_{j=1}^J \overline{\dm}_j,
  \qquad
  \dm_j \cap \dm_i = \emptyset
  \quad
  j \ne i,
\end{equation*}
and denote the discontinuity set by $\Sigma$
\begin{equation*}
  \Sigma := \cup_{j=1}^J \partial\dm_j \cap \dm.
\end{equation*}
We further make the following assumptions:
there exists $1/d \le \alpha \le 1$ such that
\begin{equation}\label{matA}
\left( \sum_{\omega_i\subset\dm_j} \int_{\omega_i} \big| A(x) -
\bar{A}(x_i) \big|^d \, dx \right)^{1/d} \le C(A) h^\beta
\quad
  \textrm{for all } 1 \le j \le J
\end{equation}  
with $\frac{2\alpha}{2+\alpha} \le \beta \le \alpha$, and there exists
a solution $u \in W^2_\infty(\dm)$ of \eqref{pde} satisfying
\begin{equation}\label{existence}
  u \in C^{2,\alpha}(\overline{\dm}_j)
  \quad
  \textrm{for all }1\le j\le J.
\end{equation}
We now exploit that operator consistency is measured in
$L^d(\dm)$ rather than $L^\infty(\dm)$, according to
Theorem \ref{discrete_ABP} (discrete ABP estimate), to
explore the consequences of \eqref{matA}-\eqref{existence}. We do not
require that $\Sigma$ is aligned with the mesh $\Th$.
\begin{corollary}[convergence rate for piecewise $C^{2,\alpha}$-solutions]\label{convergencerate-C11}
Let $\Th$ satisfy \eqref{localweaklyacute} and let $A$ and $u$ satisfy
\eqref{matA} and \eqref{existence} with
$\frac{2\alpha}{2+\alpha} \le \beta \le \alpha$.
If the two scales $h$ and $\epsilon$ satisfy 
$\epsilon = C_3 \left( h^2 \abs{\ln h}\right)^{d/(1+2d)}$ with $C_3>0$
arbitrary, then
\begin{align*}
\inftynorm {u - \ue_h} \leq 
C \big( h^{2} |\ln h| \big)^{\frac{1}{1+2d}}
\end{align*}
where the constant $C$ is proportional to the constant $C(\sigma, \dm,
d, \lambda)$ in Theorem \ref{discrete_ABP} (discrete ABP estimate), 
$|u|_{W^2_\infty\dm)}$, $|u|_{C^{2,\alpha}(\overline{\dm}_j)}$ for
$1\le j\le J$, 
and the constant $C(A)$ in \eqref{matA}.
\end{corollary}
\begin{proof}
We divide the domain $\dm$ into two subdomains $\dm \setminus\Sigma_{\epsilon}$
and $\Sigma_{\epsilon}$, where
\begin{equation}\label{Gamma-eps}
\Sigma_\epsilon := \{x\in\dm: \dist (x,\Sigma) \le Q \epsilon \},
\qquad\Rightarrow\qquad
|\Sigma_\epsilon| \le C\epsilon
\end{equation}
because $\partial\dm_j$ is at least Lipschitz for all $j$.
We study these sets separately. 
If $x_i \in \dm \setminus \Sigma_{\epsilon}$, then Lemma
\ref{lemma:estimate} (estimate of error equation) yields
\begin{align*}
   L_h^\epsilon [ u_G - \ue_h - C |\ln h| \, b_h] (x_i ) \leq 
  C_\alpha(u,A) \Big(\epsilon^\alpha + h^\beta+ \frac{h^2}{\epsilon^2}
  |\ln h| \Big) \le C_\alpha(u,A) \epsilon^\alpha,
\end{align*}
where we have used \eqref{matA} and \eqref{existence} in each $\dm_j$
as well as the relations $\frac{2\alpha}{2+\alpha} \le \beta \le\alpha$
and
$\epsilon = C \left( h^2 \abs{\ln h}\right)^{\frac{d}{1+2d}} \ge
C \left( h^2 \abs{\ln h}\right)^{\frac{1}{2+\alpha}}$ to derive the
last inequality.

Let now $x_i \in \Sigma_{\epsilon}$. Lemma \ref{T1} (estimate of $T_1$) gives 
$
T_1 \leq C |\ln h| \left (  \frac{h^2}{\epsilon^2} + L_h^\epsilon b_h(x_i)  \right ) .
$
Since $u\in W^2_\infty(\Omega)$ and $A$ is bounded, we get
$T_3, T_4 \leq C$. Moreover, a simple variant of Lemma
\ref{approximation} (approximation property of $I_\epsilon$) implies
$\big|I_\epsilon u(x_i) -
\big(\bar A(x_i)-\frac{\lambda}{2} I\big) : D^2u(x_i) \big|\le C$ for
$x_i\in \Sigma_\epsilon$, whence $T_2\leq C$. Altogether, we have 
\[
  L_h^\epsilon [ u_G - \ue_h - C |\ln h| \, b_h] (x_i) \leq C
  \quad
\text{for $x_i \in \Sigma_{\epsilon}$}
\]

Upon applying Theorem \ref{discrete_ABP} (discrete ABP estimate) we obtain
\begin{align*}
 \sup_\dm \big( u_G - \ue_h - C |\ln h| \, b_h \big)^- 
 \leq &\; C \left( \sum_{x_i \in \dm \setminus \Sigma_{\epsilon}}
 \epsilon^{\alpha d} |\omega_i|
 + \sum_{x_i \in \Sigma_{\epsilon}} |\omega_i| \right)^{1/d} . 
\end{align*}
Invoking \eqref{Gamma-eps} yields
$\sum_{x_i \in \Sigma_{\epsilon}} |\omega_i| \le C |\Sigma_\epsilon| \le C\epsilon$, whence
\[
\sup_\dm \big( u_G - \ue_h - C |\ln h| \, b_h \big)^-
\le C \big(\epsilon^{\alpha d} + \epsilon \big)^{1/d} \le C \epsilon^{1/d}
\]
because $\alpha d \ge 1$.
Since $|\ln h| \, |b_h(x)| \leq \epsilon^2 |\ln h| \le\epsilon^{1/d}$,
we deduce
\[
\sup_\dm \big( u_G - \ue_h \big)^- \leq C \epsilon^{1/d}
= C \big( h^{2} |\ln h| \big)^{1/(1+2d)},
\]
which is the desired lower bound.  We can finally obtain the upper bound
in a similar fashion, and complete the proof.
\end{proof}

%
\section{Numerical experiments}\label{S:numerics}
%

In this section, we discuss the implementation of the two-scale method
for $d=2$ and present numerical experiments that explore convergence
rates for smooth solutions, discontinuous
coefficients, and $C^{2,\alpha}$-solutions within and beyond theory.

\subsection{Implementation}\label{S:implementation}

The change of variables $y = \epsilon M(x) z$ transforms the
integral operator \eqref{Ie}, or its modification \eqref{extension}
near the boundary, into
\[
  \Ie v(x) = \int_{B_1(0)} \frac{\delta v(x, \epsilon\theta M(x) z)}{\epsilon^2\theta^2
  } \varphi ( z) \, dz ,
\]
where $B_1(0)$ is the unit ball in $\mathbb{R}^d$. We recall that,
to preserve the essential properties of the scheme,
a quadrature formula of the form
\[
  \Qe v(x) = \sum_{k} w_k \; \frac{\delta v(x,\epsilon\theta M(x) q_k)}{\epsilon^2\theta^2
  } \; \varphi ( q_k)
\]
must satisfy the three conditions in subsection \ref{subsec:quadrature}.
We thus use a quadrature formula for the integral in $B_1(0)$ and
$d =2$ with the following weights $\{w_k\}_{k=1}^6$
and nodes $\{q_k\}_{i=1}^6=\{(\rho_k,\theta_k)\}_{k=1}^6$
(in polar coordinates) \cite{Stroud71}:
\[
w_k = \frac{\pi} 6,
\quad
\rho_k = \frac{\sqrt{2}}{2},
\quad
\theta_k = \frac{\pi k}{3}.
\]
This formula is exact for cubic polynomials because it is exact
for quadratics and is also symmetric.

If $u_h^\epsilon = \sum_j U_j \phi_j$ where $\phi_j$ is the hat function at
node $x_j$, then we have
\[
Q_{\epsilon} u_h^\epsilon (x_i) =
\sum_{k, j} w_k U_j \frac{\delta  \phi_j(x_i, \epsilon\theta M(x_i)
  q_k )}{\epsilon^2\theta^2} \varphi(q_k)
= \sum_{j} M_{ij} U_j
\]
for all $x_i\in\Nh$, where
\[
M_{ij} = \sum_k w_k \frac{\delta  \phi_j(x_i, \epsilon\theta M(x_i) q_k )}{\epsilon^2\theta^2} \varphi(q_k)
\]

We implement the two-scale method within the MATLAB package FELICITY
\cite{Walker13}. To evaluate $\phi_j\big(x_i+\epsilon\theta M(x_i) q_k\big)$, we
resort to the search routine in  FELICITY to find all the elements
containing the quadrature points $y_k=x_i+\theta\epsilon M(x_i)q_k$,
namely for each node $x_i$
we find the basis functions $\phi_j$ which are non-zero at $y_k$,
and evaluate them at $y_k$.

\begin{remark}[meshes]\label{rmk:meshes}
  The domains are rectangles and the meshes are made of right triangles obtained from cutting cartesian rectangles along their main diagonal.
\end{remark}

\subsection{Example 1: smooth solution}
%
We consider the domain $\dm = [0,1]^2$, solution $u$,
and anisotropic matrix $A$ with moderate aspect ratio of size $5$
given by
\begin{align}\label{example-2}
u(x,y) = \frac{y}{2} \sin(2 \pi x ) + \frac{y}{5} \sin(5\pi y),
\qquad
A(x,y) = 
\bigl(\begin{smallmatrix}
3&-2\\ -2 &3
\end{smallmatrix} \bigr).
\end{align}
We take $\epsilon = \frac 12 \sqrt{h}$, as suggested by
Corollary \ref{convergencerate-C3Holder}.
Figure \ref{fig:LAR} displays a linear asymptotic convergence
rate. This validates Corollary \ref{convergencerate-C3Holder}
and also complements Remark
\ref{R:linear-rate}, thereby showing that the two-scale method cannot
be better than first-order also for dimension $d=2$.
In addition, we stress that the PDE
\eqref{pde_a} can be written in divergence form as div $(A\nabla u)=f$,
but the use of monotone FEMs with weakly acute
meshes is prohibitive with aspect ratio $5$.
\begin{figure}[h!]\label{fig:LAR}
\centering
\includegraphics[width=0.49\textwidth]{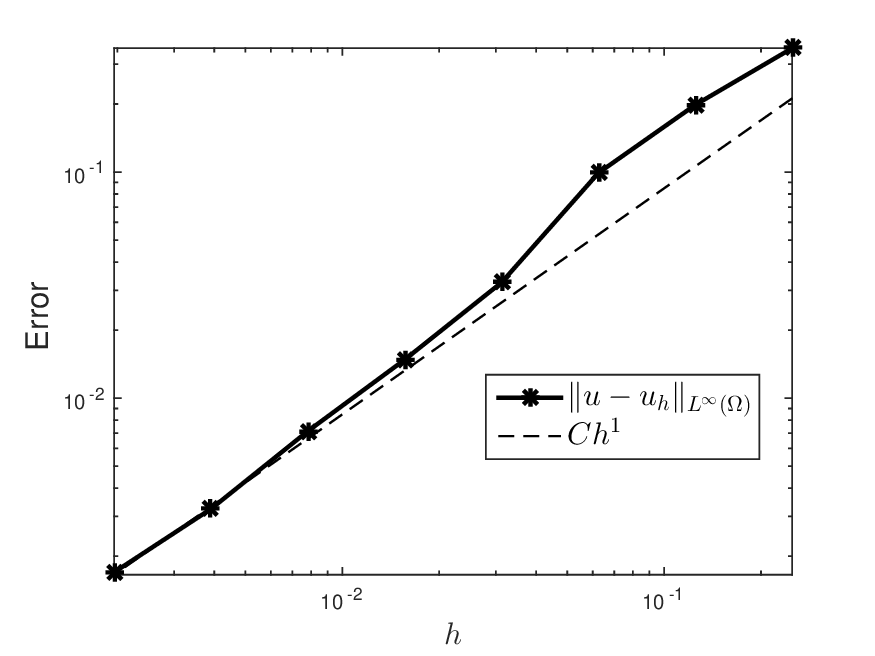}
\caption{Example 1: smooth solution $u$ and anisotropic $A$ with
aspect ratio $5$. The choice $\epsilon = \frac 12 \sqrt{h}$
yields a linear asymptotic rate which
is consistent with both Corollary \ref{convergencerate-C3Holder} and
Remark \ref{R:linear-rate}.}
\end{figure}

We point out that on average, it takes about $30\%$ of the computing time to assemble the
matrix, mostly due to the FELICITY search routine
to evaluate second differences. It takes about $50\%$ of the computing
time to solve the (non-symmetric) linear systems using MATLAB
backslash. However, solving the system requires significantly more time than
assembling the matrix for finer meshes.
The finest meshsize is $h = 2^{-9}$, which corresponds to about
$2.6\times10^{5}$ degrees of freedom and a relative pointwise error of
about $0.3 \, \%$.

\subsection{Example 2: discontinuous coefficients}
%
Let $\dm = [-1,1]^2$, the coefficient matrix $A$ exhibit the checkerboard structure
\begin{equation}\label{example-1A}
A (x, y) = 
\bigl(\begin{smallmatrix}
2 & 1\\ 1 & 2
\end{smallmatrix} \bigr)
\quad \text{ if $ x y > 0$,}
\qquad
A (x, y) = 
\bigl(\begin{smallmatrix}
2 & -1\\ -1 & 2
\end{smallmatrix} \bigr)
\quad \text{ if $ x y \leq 0 $,}
\end{equation}
with discontinuities across the axes,
and the exact solution be given by
\begin{align}\label{example-1u}
  u (x, y) = \phi(x) \phi(y) 
  \quad
  \text{ where }
  \quad
  \phi(x) = (x e^{1 - |x|} - x).
\end{align}
A simple calculation yields
\[
\gradv \phi(x) = \big(1 - |x| \big) e^{1 - |x|}  - 1
\quad
\text{and}
\quad
D^2 \phi(x) = \big(x - 2 \, \text{sgn}(x) \big) e^{1 - |x|}.
\]
Since
$u \in W^3_\infty(\Omega\setminus\Sigma), A\in W^1_\infty(\Omega\setminus\Sigma)$
with discontinuity set $\Sigma$
being the two coordinate axes, we can take $\alpha=1$,
choose $\epsilon = 0.5 h^{4/5}$
and expect a convergence rate $2/5$ according to Corollary \ref{convergencerate-C11}.
Figure \ref{fig:checkboard} (a) displays an experimental order of convergence
approximately $0.74$, which is much higher than predicted. 
\begin{figure}[h!]
\begin{center}
\includegraphics[width=0.49\textwidth]{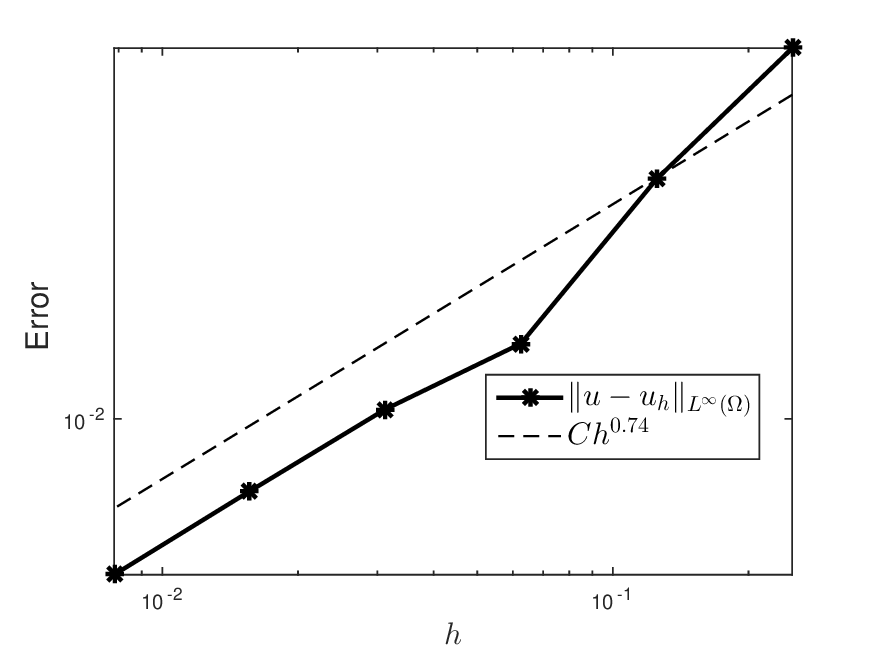}
\includegraphics[width=0.49\textwidth]{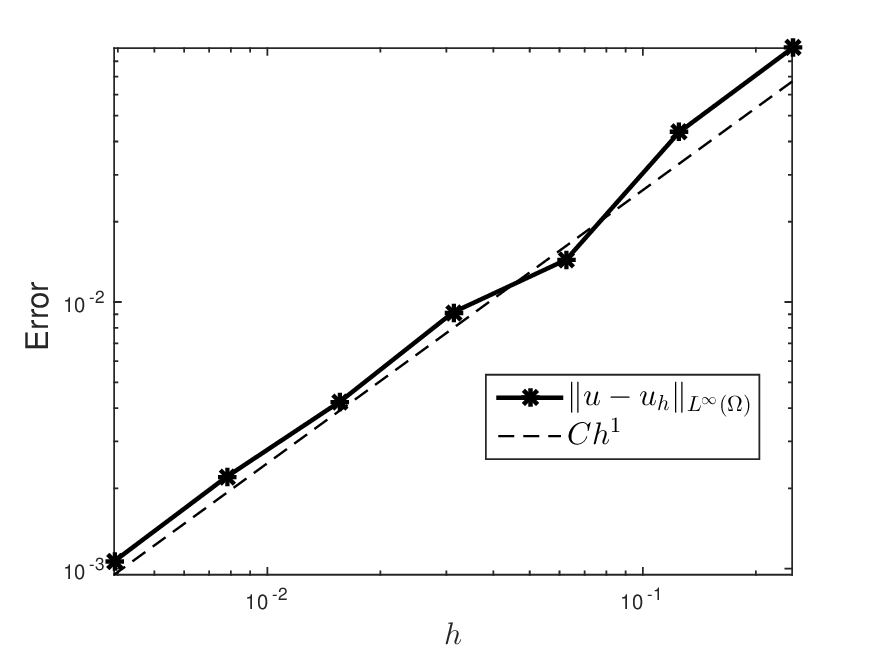}
\end{center}
\caption{\small  Example 2. The figure on the left shows that when
  $\epsilon = \frac 12 h^{4/5} $, the convergence rate is $0.74$,
  better than the rate $2/5$ predicted in Corollary
  \ref{convergencerate-C11}.  The figure on the right shows that when
  $\epsilon = h^{1/2} $, the convergence rate is $1$. This example
  shows that the discrete ABP estimate may overestimate the
  $L^{\infty}$-error when $D^2u$ is discontinuous. }
\label{fig:checkboard}
\end{figure}
The finest meshsize is $h=2^{-7}$, which corresponds to about
$6.5\times 10^4$ degrees of freedom and a relative pointwise accuracy of
about $1.3 \, \%$.

To explain this better convergence rate, we note that the consistency error
\[
E_h^\epsilon(x_i) := f_i - L_h^\epsilon I_h u(x_i)
= L_h^\epsilon [u_h^\epsilon-I_h u] (x_i)
\]
is concentrated along the $x$ and $y$-axis, where $I_h u$ is the piecewise
linear interpolant of $u$ on mesh $\Th$.
We have found computationally that the error $e_h^\epsilon := \ue_h - I_h u$ changes
rapidly (of order $O(1)$) in the direction perpendicular to $\Sigma$
and smoothly (of order $O(h^2)$) along $\Sigma$.
In fact, if node $x_i$ belongs to the $y$-axis, we observe
\begin{equation}\label{2nd-difference}
  \frac{\big|\delta e_h^\epsilon (x_i, h v_1)\big|}{h^2} =  O(1),
  \quad
  \frac{\big|\delta e_h^\epsilon (x_i, h v_2)\big|}{h^2} =  O(h^2).
\end{equation}
where $v_1 = (1, 0)$ and $v_2 = (0, 1)$.
We see a similar behavior with $v_1$ and $v_2$ exchanged
if $x_i$ belongs to the $x$-axis.
We believe that the discrete ABP estimate of Theorem \ref{discrete_ABP}
overestimates the pointwise error in this case.

In order to give a plausible explanation,
we start with Proposition \ref{Alexandroff} (discrete Alexandroff estimate) applied to
$e_h^\epsilon$
\begin{align}
  \label{numerics:A-estimate}
  \sup_{\dm} \, (e_h^\epsilon)^{-}
  \leq
  C \left( \sum_{x_i \in \mathcal{C}_h^-(e_h^\epsilon)} | \gradv e_h^{\epsilon} (x_i)| \right)^{1/2}.
\end{align}
Applying the definition of sub-differential,
we deduce $\gradv e_h^{\epsilon} (x_i) \subset R(x_i)$ where
\begin{align*}
  R(x_i) = \{ 
    w \in \mathbb R^d,  
    &\; \pm  w \cdot h v_1 \leq e_h^{\epsilon} (x_i \pm h v_1) - e_h^{\epsilon} (x_i) 
  \\
  \; \text{and} \;
  &\; \pm  w \cdot h v_2 \leq e_h^{\epsilon} (x_i \pm h v_2) - e_h^{\epsilon} (x_i) \}
\end{align*}
It is easy to check that
$|R(x_i)| = \frac{\big|\delta e_h^\epsilon(x_i, h v_1 )\big| \,
\big|\delta e_h^\epsilon(x_i, h v_2)\big|}{h^2}$ which yields
\[
  | \gradv e_h^{\epsilon} (x_i)| 
  \leq 
  \frac{\big|\delta e_h^\epsilon(x_i, h v_1 )\big| \,
  \big|\delta e_h^\epsilon(x_i, h v_2 )\big|}{h^2}.
\]
Hence, since $|\omega_i|\approx h^2$ for $d=2$, \eqref{numerics:A-estimate} yields
\[
  \sup_{\dm} \, (e_h^\epsilon)^{-}
  \leq 
  C \left( \sum_{x_i \in \mathcal{C}_h^-(e_h^{\epsilon})} \left(
  \frac{\delta e_h^\epsilon(x_i, h v_1 )}{h^2}  +  \frac{\delta
    e_h^\epsilon(x_i, h v_2)}{h^2}  \right)^2 |\omega_i| \right)^{1/2} ,
\]
because $\delta e_h^{\epsilon}(x_i,h v_j )\ge0$ for
$x_i\in\mathcal{C}_h^-(e_h^{\epsilon})$ and $j=1,2$.
We deal with meshes $\Th$ for which the discrete Laplacian satisfies
$\Delta_h  v_h (x_i) = \frac{\delta v_h(x_i,
h v_1 )}{h^2}  +  \frac{\delta v_h(x_i, h v_2)}{h^2}$ for any piecewise linear
function $v_h$; see Remark \ref{rmk:meshes}. Consequently, 
\[
  \sup_{\dm} \, (e_h^\epsilon)^{-}
  \leq 
  C \left( \sum_{x_i \in \mathcal{C}_h^-(e_h^{\epsilon})} (\Delta_h e_h^\epsilon (x_i) )^2 |\omega_i| \right)^{1/2}
  \leq
  C \left( \sum_{x_i \in \mathcal{C}_h^-(e_h^{\epsilon})}
  (L_h^\epsilon e_h^\epsilon (x_i) )^2 |\omega_i|\right)^{1/2}.
\]
Applying \eqref{2nd-difference} gives
\[
\Delta_h e_h^{\epsilon} (x_i) = \frac{\delta e_h^{\epsilon}(x_i, h v_1 )}{h^2} + \frac{\delta e_h^{\epsilon}(x_i, h v_2 )}{h^2} = O(1)
\]
and 
\[
\frac{\delta e_h^{\epsilon}(x_i, h v_1 )}{h^2}  \frac{\delta
   e_h^{\epsilon}(x_i,  h v_2 )}{h^2} = O(h^2)
\] 
for nodes $x_i \in \Sigma_\epsilon\cap\mathcal{C}_h^-(e_h^\epsilon)$,
where $\Sigma_{\epsilon}$ is defined in \eqref{Gamma-eps}.
Therefore, setting $\mathcal{C}_h^- := \mathcal{C}_h^-(e_h^\epsilon)$
and accounting for the correct contribution of $\Sigma_{\epsilon}$,
\eqref{numerics:A-estimate} implies
\begin{align*}
  \Big| \sup_{\dm} \, (e_h^\epsilon)^{-} \Big|^2
  & \leq  
  C \sum_{x_i \in \mathcal{C}_h^-  \setminus \Sigma_{\epsilon}}
  (L_h^\epsilon e_h^\epsilon (x_i) )^2 |\omega_i|
  \\
  & + C\sum_{x_i \in  \mathcal{C}_h^- \cap \Sigma_{\epsilon}} \frac{\delta e_h^{\epsilon}(x_i, h v_1 )}{h^2}  \frac{\delta e_h^{\epsilon}(x_i, h v_2 )}{h^2} |\omega_i|
  \leq \; 
  C \left( \Big( \epsilon^2 + \frac{h^2}{\epsilon^2} \Big)^{2} + h^2\epsilon \right),
\end{align*}
while the discrete ABP estimate overestimates
$\sup_{\dm} \, (e_h^\epsilon)^{-}$.
If we now choose $\epsilon = \sqrt{h}$, then the rate of convergence
is order $h$ which is consistent with Figure \ref{fig:checkboard} (b)
The finest meshsize in such figure
is $h=2^{-8}$, which leads to about $2.6 \times 10^5$
degrees of freedom and a pointwise relative error of about
$0.4 \, \%$.

\subsection{Example 3: $C^{2,\alpha}$-solution and
    $C^{0,\alpha}$-coefficients}\label{SS:singular}
%
We finally consider $\dm=(-1,1)^2$
and the following solution $u$ and coefficient matrix $A$
\begin{align}\label{example-3}
u(x) = |x|^{2+\alpha} 
\quad
\text{ and }
\quad
A(x) = I + |x|^{\alpha} \frac{x}{|x|} \otimes \frac{x}{|x|}
\end{align}
with $0 < \alpha < 1$; we choose $\alpha = 0.4$.
Since $ u \in C^{2,\alpha}(\overline{\dm})$ and
$A \in C^{0,\alpha}(\overline{\dm})$, we take $\epsilon = 1.5
h^{2/(2+\alpha)}$, and expect a convergence rate  $O(h^{2\alpha/ (2+\alpha)}) = O(h^{1/3})$
according to Corollary \ref{convergencerate-C2Holder}. This prediction
is verified in Figure \ref{fig:singular} (a) which shows an
approximate rate $1/3$. The finest meshsize is $h=2^{-8}$, which gives
rise to about $2.6 \times 10^5$ degrees of freedom and a relative
pointwise accuracy of about $2.3 \, \%$.
\begin{figure}[h!]
\begin{center}
\includegraphics[width=0.49\textwidth]{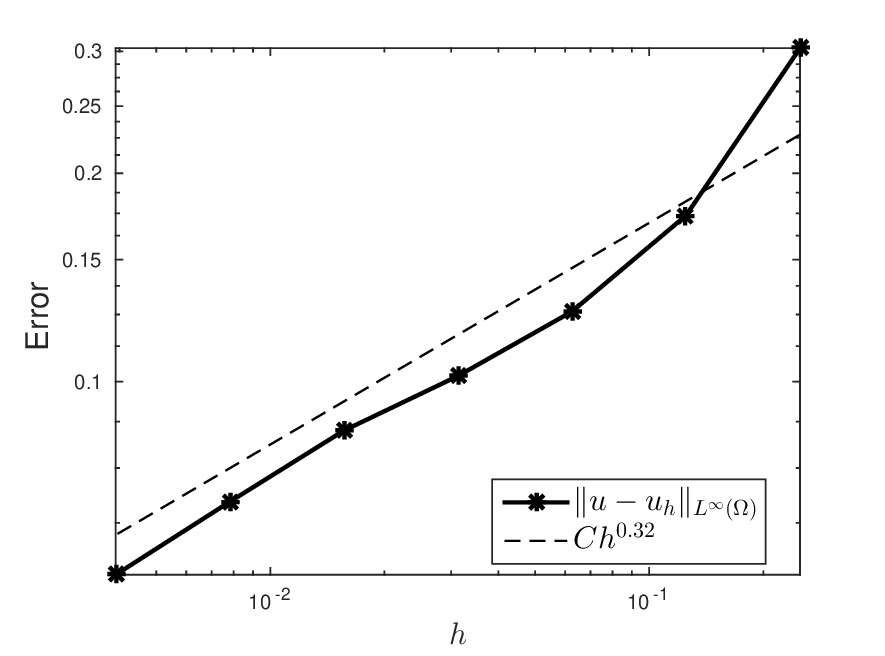}
\includegraphics[width=0.49\textwidth]{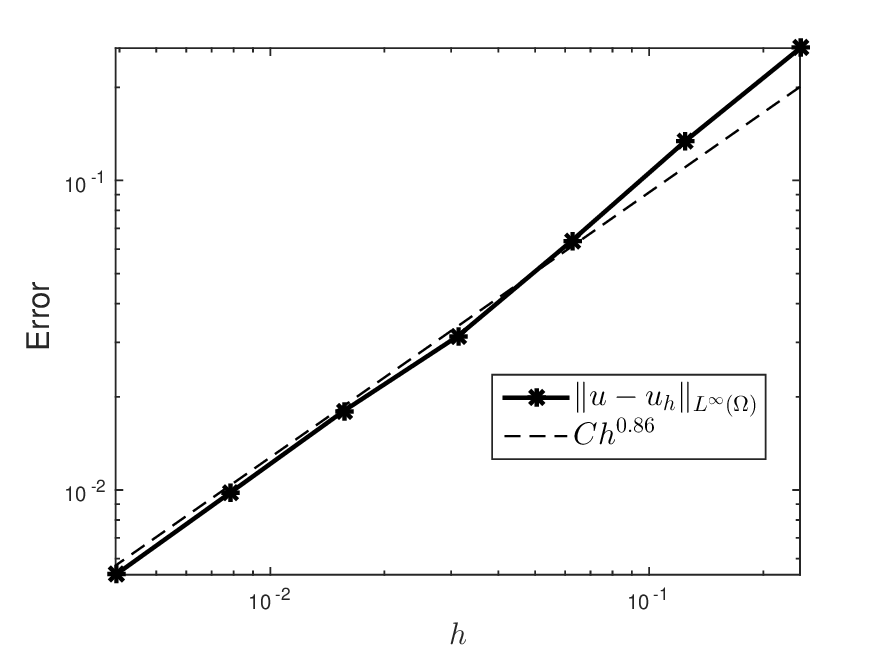}
\end{center}
\caption{\small  Example 3: $C^{2,\alpha}$-solution and
    $C^{0,\alpha}$-coefficients.
(a) If $\epsilon = 1.5 h^{2/(2 + \alpha)} $, then the convergence rate
is $1/3$, which is consistent with Corollary \ref{convergencerate-C2Holder}.
(b) If $\epsilon = 1.5 h^{2/(3 + \alpha)} $, then the convergence rate
is about $0.86$, which is better but is not supported by 
$C^{2,\alpha}$-regularity of $u$.}
\label{fig:singular}
\end{figure}

The error $u_h^\epsilon - u_G$ in the $L^{\infty}$-norm
is bounded by the operator consistency error
\[
E_h^\epsilon(x_i) := f_i - L_h^\epsilon u_G(x_i)
= L_h^\epsilon [u_h^\epsilon - u_G] (x_i)
\]
in the discrete  $L^d$-norm,
according to Theorem \ref{discrete_ABP} (discrete ABP estimate).
Since $u \in H^{3+\alpha}(\dm)$ and $d=2$, we conjecture that the quantity
\[
\left( \sum_{x_i \in \Nh}
\big|E_h^\epsilon(x_i)\big|^2 |\omega_i| \right)^{1/2} = O\left(\epsilon^{1+\alpha} + \frac {h^2}{\epsilon^2} \right),
\]
dictates the pointwise convergence rate of $u - u^{\epsilon}_h$.
Assuming this behavior, choosing $\epsilon = O(h^{2/(3+\alpha)})$,
and applying Theorem \ref{discrete_ABP}, we deduce 
\[
\norm{ u - u^{\epsilon}_h }_{L^{\infty} (\dm)} \leq O(h^{\frac{2+2\alpha}{3+\alpha}}) \approx O(h^{0.82})
\]
which is faster than the rate from Corollary
\ref{convergencerate-C2Holder}. In Figure \ref{fig:singular} (b), we
observe that the computational order of convergence is about
$0.86$, which confirms this heuristic explanation;
we are currently exploring this issue \cite{NochettoZhang16b}.
The finest meshsize in Figure \ref{fig:singular} (b) is $h=2^{-8}$, which leads to about
$2.6\times 10^5$ degrees of freedom and a pointwise relative accuracy
of about $0.23 \, \%$.

\bigskip
{\bf Acknowledgements}: We would like to thank L. Caffarelli for bringing
up the integro-differential approach of
\cite{CaffarelliSilvestre10} to us, as well as C. Gutierrez for
mentioning the discrete ABP estimate of \cite{KuoTrudinger00}.
We would also like to thank Tengfei Su for implementing the
 two-scale method and the referees for their incisive comments and
 suggestions which led to a much better exposition of techniques and results.


\bibliographystyle{plain}
%


\begin{thebibliography}{10}

\bibitem{BarlesJakobsen05}
G.~Barles and E.~R. Jakobsen.
\newblock Error bounds for monotone approximation schemes for
  {H}amilton-{J}acobi-{B}ellman equations.
\newblock {\em SIAM J. Numer. Anal.}, 43(2):540--558 (electronic), 2005.

\bibitem{BarlesSouganidis91}
G.~Barles and P.~E. Souganidis.
\newblock Convergence of approximation schemes for fully nonlinear second order
  equations.
\newblock {\em Asymptotic Anal.}, 4:271--283, 1991.

\bibitem{Bartels05}
S. Bartels.
\newblock Stability and convergence of finite-element approximation schemes for
  harmonic maps.
\newblock {\em SIAM J. Numer. Anal.}, 43(1):220--238 (electronic), 2005.

\bibitem{BenamouFroeseOberman14}
J.-D.~Benamou, B.~D.~Froese and A.~M. Oberman.
\newblock Numerical solution of the optimal transportation problem using the {M}onge-{A}mp\`ere equation.
\newblock {\em J. Comput. Phys.}, 260:107--126, 2014.

\bibitem{Bernstein1910}
S.~N. Bernstein.
\newblock Sur la g\'en\'eralisation du probl\'eme de Dirichlet.
\newblock {\em Math. Ann.},
62:253--271, 1906; 69:82-136. 1910.

\bibitem{BonnansZidani03}
J.~F. Bonnans and H.~Zidani.
\newblock Consistency of generalized finite difference schemes for the
  stochastic {HJB} equation.
\newblock {\em SIAM J. Numer. Anal.}, 41(3):1008--1021, 2003.

\bibitem{BrennerScott}
S. Brenner and L.R. Scott.
\newblock {\em The Mathematical Theory of Finite Element Methods.}
\newblock Springer 2014.

\bibitem{Caffarelli88}
L.~Caffarelli.
\newblock Elliptic second order equations.
\newblock {\em Rend. Sem. Mat. Fis. Milano}, 58:253--284 (1990), 1988.

\bibitem{CaffarelliCabre95}
L.~Caffarelli and X.~Cabr{\'e}.
\newblock {\em Fully nonlinear elliptic equations}, volume~43 of {\em American
  Mathematical Society Colloquium Publications}.
\newblock American Mathematical Society, Providence, RI, 1995.

\bibitem{CCKS:96}
L.~Caffarelli, M.G.~Crandall, M.~Kocan, and A.~\'Swiech,
\newblock On viscosity solutions of fully nonlinear equations with measurable ingredients.
\newblock {\em Comm. Pure Appl. Math.}, 1996. 
  
\bibitem{CaffarelliSilvestre10}
L.~Caffarelli and L.~Silvestre.
\newblock Smooth approximations of solutions to nonconvex fully nonlinear
  elliptic equations.
\newblock In {\em Nonlinear partial differential equations and related topics},
  volume 229, pages 67--85. Amer. Math. Soc., Providence, RI, 2010.

\bibitem{CaffarelliSouganidis08}
L.~Caffarelli and P.~E. Souganidis.
\newblock A rate of convergence for monotone finite difference approximations
  to fully nonlinear, uniformly elliptic {PDE}s.
\newblock {\em Comm. Pure Appl. Math.}, 61:1--17, 2008.

\bibitem{CamilliFalcone95}
F.~Camilli and M.~Falcone.
\newblock An approximation scheme for the optimal control of diffusion
  processes.
\newblock {\em RAIRO Mod\'el. Math. Anal. Num\'er.}, 29(1):97--122, 1995.

\bibitem{CiarletRaviart73}
Ph.~Ciartet and P.A.~Raviart.
\newblock Maximum principle and uniform convergence for the finite
element method.
\newblock {\em Comp. Meths. Appl. Mech. Eng.}, 2(1):17–-31. 1973.

\bibitem{ChiarenzaFrascaLongo91}
F.~Chiarenza, M.~Frasca, and P.~Longo.
\newblock Interior $W^2_p$ estimates for nondivergence elliptic equations with
  discontinuous coefficients.
\newblock {\em Ricerche Mat.}, 40(1):149--168, 1991.

\bibitem{ChiarenzaFrascaLongo93}
F.~Chiarenza, M.~Frasca, and P.~Longo.
\newblock $W^2_p$-solvability of the
Dirichlet problem for nondivergence elliptic equations with VMO coefficients.
\newblock {\em Trans. Amer. Math. Soc.},
336(2):841--853, 1993.

\bibitem{Ciarlet91}
P.~G. Ciarlet.
\newblock Basic error estimates for elliptic problems.
\newblock In {\em Handbook of numerical analysis, {V}ol.\ {II}}, Handb. Numer.
  Anal., II, pages 17--351. North-Holland, Amsterdam, 1991.

\bibitem{DeanGlowinski03}
E.~J. Dean and R.~Glowinski.
\newblock {\em Numerical solution of the two-dimensional elliptic {M}onge-{A}mp\`ere
  equation with {D}irichlet boundary conditions: an augmented {L}agrangian
  approach.}
\newblock C. R. Math. Acad. Sci. Paris, 336(9):779--784, 2003.
  
\bibitem{DebrabantJakobsen13}
K.~Debrabant and E.~R. Jakobsen.
\newblock Semi-{L}agrangian schemes for linear and fully non-linear diffusion
  equations.
\newblock {\em Math. Comp.}, 82(283):1433--1462, 2013.

\bibitem{FengHenningsNeilan}
X.~Feng, L.~Hennings, and M.~Neilan.
\newblock Finite element methods for second order linear elliptic partial differential equations in non-divergence form.
\newblock {\em Math. Comp.}, (to appear).

\bibitem{FlemingSoner06}
W.~H. Fleming and H.~M. Soner.
\newblock {\em Controlled {M}arkov processes and viscosity solutions},
  volume~25 of {\em Stochastic Modelling and Applied Probability}.
\newblock Springer, New York, second edition, 2006.

\bibitem{GilbargTrudinger01}
D.~Gilbarg and N.~S. Trudinger.
\newblock {\em Elliptic partial differential equations of second order}, volume
  224 of {\em Grundlehren der Mathematischen Wissenschaften [Fundamental
  Principles of Mathematical Sciences]}.
\newblock Springer-Verlag, Berlin, second edition, 1983.

\bibitem{Grunbaum03}
B. Gr{\"u}nbaum.
\newblock {\em Convex polytopes}, volume 221 of {\em Graduate Texts in
  Mathematics}.
\newblock Springer-Verlag, New York, second edition, 2003.
\newblock Prepared and with a preface by Volker Kaibel, Victor Klee and
  G{\"u}nter M. Ziegler.

\bibitem{HanLin}
Q.~Han and F.~Lin.
\newblock {\em Elliptic partial differential equations}, volume~1 of {\em
  Courant Lecture Notes in Mathematics}.
\newblock New York University, Courant Institute of Mathematical Sciences, New
  York; American Mathematical Society, Providence, RI, 1997.

\bibitem{Jensen95}
R.~R. Jensen.
\newblock Uniformly elliptic PDEs with bounded, measurable coefficients.
\newblock {\em J. Fourier Anal. Appl.},
2(3):237--259, 1995.

\bibitem{JensenSmears13}
M.~Jensen and I.~Smears.
\newblock On the convergence of finite element methods for
  {H}amilton-{J}acobi-{B}ellman equations.
\newblock {\em SIAM J. Numer. Anal.}, 51(1):137--162, 2013.

\bibitem{Kim07}
D. Kim.
\newblock Second order elliptic equations in $\mathbb{R}^d$ with piecewise
continuous coefficients.
\newblock {\em Potential Anal.}, 26:189--212, 2007.

\bibitem{Kocan95}
M. Kocan.
\newblock  Approximation of viscosity solutions of elliptic partial
differential equations on minimal grids
\newblock {\em Numer. Math.}, 72:73--92, 1995.

\bibitem{KorotovKrizekNeittaanmaki00}
S.~Korotov, M.~K{\v{r}}{\'{\i}}{\v{z}}ek, and P.~Neittaanm{\"a}ki.
\newblock Weakened acute type condition for tetrahedral triangulations and the
  discrete maximum principle.
\newblock {\em Math. Comp.}, 70(233):107--119 (electronic), 2001.

\bibitem{Krylov97}
N.~V. Krylov.
\newblock On the rate of convergence of finite-difference approximations for
  {B}ellman's equations.
\newblock {\em Algebra i Analiz}, 9:245--256, 1997.

\bibitem{Krylov00}
N.~V. Krylov.
\newblock On the rate of convergence of finite-difference approximations for
  {B}ellman's equations with variable coefficients.
\newblock {\em Probab. Theory Related Fields}, 117:1--16, 2000.

\bibitem{Kufner}
A.~Kufner, O.~John, and S.~Fu{\v{c}}{\'{\i}}k.
\newblock {\em Function spaces}.
\newblock Noordhoff International Publishing, Leyden; Academia, Prague, 1977.
\newblock Monographs and Textbooks on Mechanics of Solids and Fluids; Mechanics: Analysis.


\bibitem{KuoTrudinger92}
H.~J. Kuo and N.~S. Trudinger.
\newblock Discrete methods for fully nonlinear elliptic equations.
\newblock {\em SIAM J. Numer. Anal.}, 29:123--135, 1992.

\bibitem{KuoTrudinger00}
H-J. Kuo and N.~S. Trudinger.
\newblock A note on the discrete {A}leksandrov-{B}akelman maximum principle.
\newblock In {\em Proceedings of 1999 {I}nternational {C}onference on
  {N}onlinear {A}nalysis ({T}aipei)}, volume~4, pages 55--64, 2000.

\bibitem{KushnerDupuis01}
H.~J. Kushner and P.~Dupuis.
\newblock {\em Numerical methods for stochastic control problems in continuous
  time}, volume~24 of {\em Applications of Mathematics (New York)}.
\newblock Springer-Verlag, New York, second edition, 2001.
\newblock Stochastic Modelling and Applied Probability.

\bibitem{LadyzhenskayaUraltseva68}
O.~A. Ladyzhenskaya and N.~N. Ural{\cprime}tseva.
\newblock {\em Linear and quasilinear elliptic equations}.
\newblock Translated from the Russian by Scripta Technica, Inc. Translation
  editor: Leon Ehrenpreis. Academic Press, New York-London, 1968.

\bibitem{LakkisPryer11}
O.~Lakkis and T.~Pryer.
\newblock A finite element method for second order nonvariational elliptic
  problems.
\newblock {\em SIAM J. Sci. Comput.}, 33(2):786--801, 2011.

\bibitem{Lorenzi72}
A. Lorenzi.
\newblock On elliptic equations with piecewise constant coefficients, II.
\newblock {\em Ann. Scuola Norm. Sup. Pisa}, 26(3), 839–-870, 1972.

\bibitem{MaugeriPalagachevSoftova00}
A.~Maugeri, D.~K. Palagachev, and L.~G. Softova.
\newblock {\em Elliptic and parabolic equations with discontinuous
  coefficients}, volume 109 of {\em Mathematical Research}.
\newblock Wiley-VCH Verlag Berlin GmbH, Berlin, 2000.

\bibitem{MotzkinWasow52}
T.~S. Motzkin and W. Wasow.
\newblock On the approximation of linear elliptic
differential equations by difference equations with positive
coefficients.
\newblock {J. Math. Physics}, 31:253--59, 1953.

\bibitem{Nadirashvili97}
N.~Nadirashvili.
\newblock Nonuniqueness in the martingale problem and the {D}irichlet problem
  for uniformly elliptic operators.
\newblock {\em Ann. Scuola Norm. Sup. Pisa Cl. Sci. (4)}, 24(3):537--549, 1997.

\bibitem{NochettoPaoliniVerdi91}
R.~H. Nochetto, M.~Paolini, and C.~Verdi.
\newblock An adaptive finite element method for two-phase {S}tefan problems in
  two space dimensions. {I}. {S}tability and error estimates.
\newblock {\em Math. Comp.}, 57(195):73--108, S1--S11, 1991.

\bibitem{NochettoZhang16}
R.~H.~Nochetto and W.~Zhang.
\newblock Pointwise rates of convergence for the Oliker-Prussner method for the Monge-Amp\`ere
equation.
\newblock (submitted).

\bibitem{NochettoZhang16b}
R.~H.~Nochetto and W.~Zhang.
\newblock Two-scale FEM for equations in non-divergence form:
Calder\'on-Zygmund theory.
\newblock (in preparation).

\bibitem{Safonov99}
M.~Safonov.
\newblock Nonuniqueness for second-order elliptic equations with measurable
  coefficients.
\newblock {\em SIAM J. Math. Anal.}, 30(4):879--895 (electronic), 1999.

\bibitem{SchatzWahlbin82}
A.~H. Schatz and L.~B. Wahlbin.
\newblock On the quasi-optimality in {$L_{\infty }$} of the {$\dot
  H^{1}$}-projection into finite element spaces.
\newblock {\em Math. Comp.}, 38(157):1--22, 1982.

\bibitem{SmearsSuli13}
I.~Smears and E.~S{\"u}li.
\newblock Discontinuous {G}alerkin finite element approximation of
  nondivergence form elliptic equations with {C}ord\`es coefficients.
\newblock {\em SIAM J. Numer. Anal.}, 51(4):2088--2106, 2013.


\bibitem{SmearsSuli14}
I.~Smears and E.~S{\"u}li.
\newblock Discontinuous {G}alerkin finite element approximation of {H}amilton-{J}acobi-{B}ellman equations with {C}ord\`es coefficients.
\newblock {\em SIAM J. Numer. Anal.}, 52(2):993--1016, 2014.

\bibitem{Stroud71}
A.~H. Stroud.
\newblock {\em Approximate calculation of multiple integrals.}
Prentice-Hall, Inc., Englewood
Cliffs, N.J., 1971. Prentice-Hall Series in Automatic Computation.


\bibitem{Walker13}
S. Walker.
\newblock FELICITY: Finite Element Implementation and Computational
Interface Tool for you,
Tutorial, 2013.

\bibitem{WangWang}
C.~Wang and J.~Wang.
\newblock A primal-dual weak galerkin finite element method for second order
  elliptic equations in non-divergence form.
\newblock {\em http://arxiv.org/abs/1510.03499}.

\end{thebibliography}

\def\cprime{$'$}


\end{document}